\documentclass[reqno,centertags,a4paper,11pt]{amsart}
\usepackage{amssymb}
\usepackage{amsthm}
\usepackage{eucal}
\usepackage{color, soul, xcolor}
\soulregister\cite{7}
\soulregister\ref{7}
\soulregister\pageref7
\usepackage[english]{babel}
\usepackage{graphicx}
\usepackage{subfig}
\usepackage{hyperref}

\usepackage{lineno}

\def\medno{\medskip \noindent}

\newcommand{\R}{\mathbb{R}}

\def\ra{\right>}
\def\la{\left<}

\newcommand{\p}{\partial}
\newcommand{\rr}{\mathbb{R}}

\renewcommand{\p}{\partial}

\DeclareMathOperator{\sech}{sech}
\numberwithin{equation}{section}
\newtheorem{theorem}{Theorem}[section]
\newtheorem{proposition}[theorem]{Proposition}
\newtheorem{remark}[theorem]{Remark}
\newtheorem{lemma}[theorem]{Lemma}

\begin{document}
\title[ Evolution of the radius of analyticity]{ Evolution of the radius of analyticity \\ for mKdV-type equations}
\author[R. O. Figueira and M. Panthee]{Renata O. Figueira and  Mahendra  Panthee}
\address{\newline
Renata O. Figueira
\newline Department of Mathematics, University of Campinas (UNICAMP)
\newline 13083-859 Campinas, SP, Brazil
\newline email: {\tt re.oliveira.mat@gmail.com} 
\vspace{0.5cm}
\newline
Mahendra Panthee  \newline
Department of Mathematics,
 University of Campinas (UNICAMP)\newline
13083-859, Campinas, SP, Brazil
\newline
 email: {\tt mpanthee@unicamp.br}. \vspace{.5 cm}
}

\thanks{This work was partially supported by FAPESP, Brazil.}

\maketitle

\begin{abstract}
In this paper, we obtain new lower bounds for the evolution of the radius of  analyticity of solutions to two initial value problems (IVPs)  with initial data belonging to the class of analytic functions $H^{\sigma,s}(\rr)$ defined via a hyperbolic cosine weight. First, we consider the IVP for the modified Korteweg-de Vries (mKdV) equation.
For this problem, we prove that the evolution of the radius of  analyticity $\sigma(T)$ of the solution admits an algebraic lower bound $cT^{-\frac 12}$ for some $c>0$ and given arbitrarily large $T>0$. 
Next, we analyze the IVP for the mKdV equation with generalized dispersion (mKdVm) and a damping term. For this problem, we guarantee the local well-posedness in $H^{\sigma,s}(\rr)$ and demonstrate that the local solution can be extended globally in time and admits constant lower bounds for the radius of analyticity $\sigma(t)$ as time goes to infinity. 
The outcome of this paper concerning the mKdV equation represents an improvement on that achieved by the authors' previous work in \cite{FP24}. As far as we know, the results for the mKdVm with damping are new.
\end{abstract}
\vskip 0.3cm

{\it Keywords:} modified KdV equation, radius of analyticity, initial value problem, local and global well-posedness, spatial analyticity, Fourier restriction norm, Gevrey spaces
 
 \vspace{0.2cm}
{\it 2020 AMS Subject Classification:}  35A20, 35Q53, 35B40, 35Q35. 

%
%
%
%
%
%

\section{Introduction and results}

In this work, we consider the initial value problems (IVPs) for the modified Korteweg-de Vries (mKdV) equation
\begin{equation}\label{mKdV-IVP}
\left\{\begin{array}{l}
\partial_t u+ \partial_x^{3}u+\mu u^2\p_x u
=
0,
\quad x,t\in\rr, \\
u(x,0)
=
u_0(x),
\end{array}\right.
\end{equation}
where $u$ is a real-valued function and $\mu=\pm1$,
and for the dispersion  generalized mKdV (mKdVm) equation with damping 
\begin{equation}\label{mKdVm-IVP}
\left\{\begin{array}{l}
\partial_t v+(-1)^{j+1} \partial_x^{m}v+\mu v^2\p_x v +a(x)v
=
0,
\quad x,t\in\rr, \\
v(x,0)
=
v_0(x),
\end{array}\right.
\end{equation}
where $m=2j+1$ with $j\in \{2,3,\dots\}$, and $v$ and $a$ are real-valued functions.

The mKdV equation \cite{M} emerges as an extension of the renowned Korteweg--de Vries (KdV) equation \cite{KdV}, distinguished by its focusing case for $\mu = 1$ and the defocusing case for $\mu = -1$.
Widely prevalent across diverse physical scenarios, the mKdV equation finds application in various domains. Notable examples include its role in plasma wave propagation \cite{D}, the dynamics of traffic flow \cite{W}, studies in fluid mechanics \cite{J} and investigations into nonlinear optics \cite{HK}.

Continuing the research started in \cite{BFH}, \cite{FP23} and \cite{FP24}, we consider the IVPs \eqref{mKdV-IVP} and \eqref{mKdVm-IVP} with given data in the Gevrey spaces  $G^{\sigma,s}(\mathbb{R})$. 
For $\sigma>0$ and $s\in \mathbb{R}$, the Gevrey space $G^{\sigma,s}(\mathbb{R})$ is generally defined as
$$
G^{\sigma,s}(\mathbb{R})
:=
\left\{ f\in L^2(\mathbb{R}):\;
\|f\|_{G^{\sigma,s}}
=
\Big(\int \la\xi\ra^{2s}e^{2\sigma |\xi|}|\widehat{f}(\xi)|^2 d{\xi}\Big)^\frac 12
<
\infty\right\},
$$
where $\la\xi\ra= (1+|\xi|)$ and $\widehat{f}$ denotes the Fourier transform given by
$$
\widehat{f}(\xi)
=\mathcal{F}(f)(\xi)
=
c\int e^{-ix\xi}f(x)d x.
$$
This space measures the regularity of the initial data and is, in some sense, a generalization of the classical $L^2$-based Sobolev space because for $\sigma=0$, $G^{0, s}(\R)$ simply becomes $H^s(\R)$.

Derived from the well-known Paley-Wiener theorems, it is established that a function $f\in G^{\sigma,s}(\mathbb{R})$ is analytic and possesses a holomorphic extension $\tilde{f}$ defined on the strip $S_\sigma=\{x+iy: |y|<\sigma\}$, satisfying $\sup_{|y|<\sigma} \|\tilde{f}(\cdot+iy)\|_{H^s}<\infty$ (we refer to page 174 in \cite{Ka} for details).
The parameter $\sigma>0$ is commonly referred to as the uniform radius of analyticity, as it determines the width of the strip where the function in $G^{\sigma,s}(\mathbb{R})$ can be holomorphically extended.

The well-posedness issues of the IVP \eqref{mKdV-IVP} with given data in the classical Sobolev spaces $H^s(\R)$ are well understood. For the sharp local well-posedness result, we refer to \cite{KPV93}, where the authors utilized the smoothing effect of the associated linear group combined with Strichartz and maximal function estimates to accomplish the work. This result was later reproduced in \cite{Tao} using trilinear estimates in the Bourgain space framework, which is defined as the completion of the Schwartz space with respect to the norm
\begin{equation}\label{Xsb}
\|w\|_{X^{s,b}}
=
\big\|\la\xi\ra^{s}\la\tau-\xi^{3}\ra^{b}\widehat{w}(\xi,\tau)\big\|_{L^2_{\xi}L^2_{\tau}},
\end{equation}
for $s,b\in\rr$. The Bourgain space $X^{s,b}(\rr^2)$, whose norm \eqref{Xsb} is defined using the restriction of the Fourier transform, has proved to be very convenient for dealing with the well-posedness issues for dispersive equations with low regularity  data.  Using this framework, the sharp global well-posedness result for the IVP \eqref{mKdV-IVP} with initial data in the classical Sobolev spaces $H^s(\R)$ was obtained in \cite{CKSTT}, see also \cite{CP-12}.

For the case of the IVP \eqref{mKdVm-IVP} with no damping, i.e., when $a = 0$, the local well-posedness result in $H^s(\rr)$ was shown in \cite{BFH}, where trilinear estimates in Bourgain spaces adapted to the higher-order dispersive term were derived.

Concerning the IVPs \eqref{mKdV-IVP} and \eqref{mKdVm-IVP} with initial data $u_0$ and $v_0$ in the Gevrey spaces $G^{\sigma_0,s}(\mathbb{R})$ for a fixed $\sigma_0>0$, the following natural questions arise: 

\begin{itemize}
\item Is it possible to obtain a local well-posedness result and maintain the radius of analyticity $\sigma_0$ during the local time of existence?
\item Is it possible to extend the local solutions $u(t)$ and $v(t)$ to any time interval $[0, T]$, and what is the behavior of the radius of analyticity, say $\sigma(t)$, as time progresses? If $\sigma(t)$ decreases as time evolves, is it possible to find a lower bound?
\end{itemize}

In recent times, these sorts of questions are widely studied for dispersive equations. For example, \cite{BFH, BGK, DMT, FP23, FP24, GTB, GK-1, SS, ST, T, TTB} are a few notable works.

For the IVP \eqref{mKdV-IVP} with initial data in $G^{\sigma, s}(\R)$, the first local well-posedness result was obtained in \cite{GK-1} whenever $s>\frac32$. The local well-posedness result proved in \cite{GK-1} ensures the existence of a finite time $T_0>0$, such that $\sigma(t)=\sigma_0$ for all $|t|\leq T_0$, indicating that the radius of analyticity of the solution remains the same as that of  the initial data for at least a small time interval.
The global well-posedness issue was addressed in \cite{BGK}, where the authors  proved that the local solution can be extended globally in time to any given interval $[0, T]$ and also obtained an algebraic lower bound $cT^{-12}$ for the evolution of the radius of analyticity. The next question one may naturally ask is whether the lower bound obtained in \cite{BGK} can be improved, in the sense that the decay rate of the radius of analyticity could be slower than the one obtained in \cite{BGK}, allowing the solution $u(t)$ to remain analytic in a larger strip for a longer time.

Motivated by the question raised in the previous paragraph, the authors in \cite{FP23} considered the IVP  \eqref{mKdV-IVP} with initial data in $G^{\sigma, s}(\R)$ and proved the local well-posedness result for $s\geq \frac14$ and also obtained $cT^{-\frac43}$ as an algebraic lower bound for the evolution of the radius of analyticity while extending the local solution globally in time, offering a significant improvement on the lower bounds for the radius of analyticity obtained in \cite{BGK}. This lower bound was further improved to $cT^{-1}$ in the authors' subsequent work \cite{FP24}.

In the case of the IVP \eqref{mKdVm-IVP} without damping (i.e., $a = 0$), local well-posedness in the Gevrey spaces $G^{\sigma, s}(\R)$ was established in \cite{BFH} under the condition
\begin{equation}
\label{s.index}
 s\ge s_m=\max\Big\{ -\frac{m-2}{6}, -\frac{14m-55}{60}\Big\}.
\end{equation} 
Furthermore, the authors demonstrated that the local solution could be extended globally in time, with a lower bound for the evolution of the radius of analyticity of order $T^{-\frac{1}{s_m}}$ for $m = 5, 7$, and $T^{-1}$ for $m \geq 9$.

The primary objective of this paper is to further improve  the lower bound for the evolution of the radius of analyticity of the solution of \eqref{mKdV-IVP}  obtained in \cite{FP24} and to leverage the damping term in \eqref{mKdVm-IVP} to establish a constant lower bound in this scenario, thereby improving the result obtained in \cite{BFH}. In the sequel, we provide an overview of the approach employed in this work.

The technique used in \cite{FP23} and \cite{FP24}, which is also employed here in this paper, essentially consists in proving an almost conserved quantity of the form
\begin{equation}
\label{ACL}
\sup\limits_{t\in[0,T_0]}
A_\sigma(t)
\le
A_\sigma(0) +C\sigma^\theta A_\sigma(0)^n, 
\;\; \theta>0,
\end{equation}
for all $0<\sigma\le \sigma_0$, where $C$ is a positive constant, $n\in\mathbb{N}$ and $A_\sigma(t)$ is associated with the conserved quantity satisfied by the flow of \eqref{mKdV-IVP} and, consequently, with the norm of the local solution at time $t\in [0, T_0]$, i.e., $\|u(\cdot,t)\|_{G^{\sigma,s}}$. This idea of almost conserved quantity was first introduced in \cite{ST} and has since been utilized by several authors, see, for example, \cite{SS}, \cite{FP23}, \cite{FP24} and references therein. With the almost conserved quantity of the form \eqref{ACL} at hand,  it is possible to extend the local solution $u$ globally in time  by decomposing any given time interval $[0,\,T]$ into short subintervals and iterating the local result in each subinterval. During this iteration process, a restriction emerges that provides a lower bound for the radius of analyticity $\sigma(T)$ as described below
$$
u\in C([-T,T]; G^{\sigma(T),s}(\rr)), \;\;\text{ with }\;\; \sigma(T)\ge cT^{-\frac 1\theta},
$$
where $\theta$ corresponds precisely to the value appearing in \eqref{ACL} and $c>0$ is a positive constant independent of $T$.
It is noteworthy that a larger value of $\theta$ yields a better lower bound for $\sigma(t)$, signifying a slower decay of the radius of analyticity as time goes to infinity.

In our previous work \cite{FP23}, the following almost conserved quantity at level $G^{\sigma, 1}(\mathbb{R})$ in the defocusing case ($\mu = -1$) was proved
\begin{equation}
\label{ACL1}
\sup\limits_{t\in[0,T_0]}
A_{I,\sigma}(t)
\leq
A_{I,\sigma}(0) + C\sigma^{\theta} A_{I,\sigma}(0)^2 \big(1+A_{I,\sigma}(0)\big),
\qquad \theta\in\Big[0, \frac34\Big],
\end{equation}
where $A_{I,\sigma}(t)$ is defined by
\begin{equation*}
A_{I,\sigma}(t)
=
\|e^{\sigma |D_x|}u(t)\|^2_{L_x^2}+\|\p_x(e^{\sigma |D_x|}u(t))\|^2_{L_x^2} -\frac{\mu}{6} \|e^{\sigma |D_x|}u(t)\|_{L^4_x}^4,
\end{equation*}
thus obtaining $cT^{-\frac43}$ as a lower bound for the radius of analyticity.
In the subsequent work \cite{FP24}, also in the defocusing case, an improvement on the interval to which the exponent $\theta$ belongs was achieved by proving the following almost conserved quantity at the level $G^{\sigma,2}(\mathbb{R})$
\begin{equation}
\label{ACL2}
\sup\limits_{t\in[0,T_0]}
A_{II,\sigma}(t)
\leq
A_{II,\sigma}(0) + C\sigma^\theta A_{II,\sigma}(0)^2 \big(1+A_{II,\sigma}(0)+A_{II,\sigma}(0)^2\big),
\qquad \theta\in [0,1],
\end{equation}
where $A_{II,\sigma}(t)$ is given by
\begin{equation}
\label{AII}
\begin{split}
A_{II,\sigma}(t)
&=
\|e^{\sigma |D_x|}u(t)\|^2_{L_x^2}+\|\p_x(e^{\sigma |D_x|}u(t))\|^2_{L_x^2} +\|\p_x^2(e^{\sigma |D_x|}u(t))\|^2_{L_x^2}\\
& \quad-
\frac{\mu}{6}\|e^{\sigma |D_x|}u(t)\|_{L^4_x}^4
-
\frac{5\mu}{3}
\|(e^{\sigma |D_x|}u)\cdot\p_x(e^{\sigma |D_x|}u)(t)\|_{L^2_x}^2
+
\frac{1}{18}
\|e^{\sigma |D_x|}u(t)\|_{L_x^6}^6,
\end{split}
\end{equation}
and, consequently, obtaining $cT^{-1}$ as a lower bound for the evolution of the radius of analyticity of the solution.

Since the mKdV equation has infinitely many conserved quantities, the natural path to follow would be to consider the almost conserved quantities at the higher levels of regularity.
However, a crucial observation must be made: both proofs of the inequalities \eqref{ACL1} and \eqref{ACL2} rely on a particular property of the exponential function
\begin{equation}
\label{exp-est}
e^{\sigma |\xi|}-1
\leq
(\sigma |\xi|)^\theta e^{\sigma |\xi|},
\quad \theta\in[0,1].
\end{equation}
The presence of the exponential weight  $e^{\sigma |\xi|}$ in the definition of the Gevrey space  $G^{\sigma,s}(\rr)$ naturally leads to the use of an estimate of the form \eqref{exp-est}. Therefore, due to the necessity of inequality \eqref{exp-est} in the technique presented in the proofs of \eqref{ACL1} and \eqref{ACL2}, the highest achievable value for $\theta$ is $\theta=1$, resulting in $cT^{-1}$ as the best lower bound for the radius of analyticity.  This is the reason for considering the almost conserved quantities only up to level $G^{\sigma,2}(\rr)$ using this approach with the exponential weight in the norm of $G^{\sigma,s}(\rr)$. With this observation in mind, to get a better lower bound for the evolution of the radius of analyticity one needs to find some other alternative to avoid the use of the estimate \eqref{exp-est} in its current form.

\subsection{Function spaces and the statement of the main results}
Motivated by the recent contributions in \cite{DMT}, \cite{GTB} and \cite{TTB}, our current investigation considers a modified Gevrey norm by replacing the exponential weight $e^{\sigma |\xi|}$ in $\| \cdot\|_{G^{\sigma,s}}$ with a hyperbolic cosine weight $\cosh(\sigma |\xi|)=\cosh(\sigma \xi)$, defined as follows
\begin{equation}
\| f\|_{H^{\sigma,s}}
:=
\Big(\int \la\xi\ra^{2s}\cosh^2(\sigma \xi)|\widehat{f}(\xi)|^2 d{\xi}\Big)^\frac 12.
\end{equation}

The norms $\| \cdot\|_{G^{\sigma,s}}$ and $\| \cdot\|_{H^{\sigma,s}}$ are equivalent due to the trivial estimate
\begin{equation}
\label{cosh-exp-est}
\frac 12 e^{\sigma |\xi|} 
\le
\cosh (\sigma \xi)
\le
e^{\sigma |\xi|}.
\end{equation}
This equivalence highlights that both norms control the same function space, with only a slight difference in the way the analytic growth is captured.
Therefore, the operator $\cosh(\sigma D_x)$, defined by 
$$
\mathcal{F}(\cosh(\sigma D_x)\varphi)(\xi)
=
\cosh(\sigma \xi)\widehat{\varphi}(\xi),
$$
serves as a bridge between the classical Sobolev space $H^s(\rr)$ and the class of analytic functions $H^{\sigma,s}(\rr)$. Specifically, it satisfies the relation 
$$\|f\|_{H^{\sigma,s}}=\|\cosh(\sigma D_x)f\|_{H^{s}}, $$
highlighting that the analytic growth introduced by this operator characterizes a function in $H^{\sigma,s}(\mathbb{R})$ as one whose image under $\cosh(\sigma D_x)$ belongs to $H^s(\mathbb{R})$.

The use of the hyperbolic cosine  weight in the Gevrey norm offers an advantage as the function $\cosh(\sigma \xi)$ complies with the estimate
\begin{equation}\label{cosh-1}
\cosh(\sigma\xi)-1\leq (\sigma|\xi|)^{2\theta}\cosh(\sigma\xi), \qquad \theta\in [0,1],
\end{equation}
which appears while constructing almost conserved quantities, see \eqref{Fourier-F} below.
Observe that, considering $\theta =1$ in \eqref{cosh-1}, it is possible  to achieve an exponent $2$ for $\sigma$ in the almost conserved quantity, which translates to $cT^{-\frac 12}$ as the new algebraic lower bound for the radius of analyticity (see Theorems~\ref{ACL-mKdV-thm} and \ref{global-mKdV-thm}  below).

Considering hyperbolic cosine as the weight function in the space measuring the regularity of the initial data, it is reasonable to extend this consideration to the Bourgain spaces.
We consider the space $Y_m^{\sigma,s,b}(\rr^2)$ given by the norm
\begin{equation}
\label{Ysb-norm}
\|w\|_{Y_m^{\sigma,s,b}}
:=
\big\| \cosh(\sigma\xi)\la\xi\ra^{s}\la\tau-\xi^{m}\ra^{b}\widehat{w}(\xi,\tau)\big\|_{L^2_{\xi,\tau}},
\quad m\in\{3,5,7,\ldots\}.
 \end{equation}

Also, for $T>0$ we denote the Gevrey-Bourgain space restricted in time to $(-T,T)$ by $Y_{m,T}^{\sigma,s,b}(\rr^2)$ with norm given by
\begin{equation*}
\| w\|_{Y_{m,T}^{\sigma,s,b}}
:=
\inf\big\{ \|\tilde{w}\|_{Y_m^{\sigma,s,b}}:\;
w=\tilde{w}
\text{ on } \rr\times (-T,T)\big\}.
\end{equation*} 
When $m=3$ we simplify the notation by writing $Y^{\sigma,s,b}:= Y_3^{\sigma,s,b}$ and similarly $Y_{T}^{\sigma, s,b}:= Y_{3,T}^{\sigma, s,b}$. 
Additionally, if $\sigma=0$, we omit the analytic parameter, denoting $ Y^{s,b}_m:=Y^{0,s,b}_m$.
In the special case when $\sigma=0$ and $m=3$, the norm reduces to the classical Bourgain space norm given by \eqref{Xsb}, meaning that $Y_3^{0,s,b}\equiv Y^{s,b}\equiv X^{s,b}$. 

Again, in light of \eqref{cosh-exp-est}, one observes that $\|\cdot\|_{X_m^{\sigma,s,b}}$ and  $\|\cdot\|_{Y_m^{\sigma,s,b}}$ are equivalent, where $\|\cdot\|_{X_m^{\sigma,s,b}}$ is considered in \cite{BFH} and \cite{FP23} is given by 
$$
\|w\|_{X_m^{\sigma,s,b}}
:=
\big\| e^{\sigma|\xi|}\la\xi\ra^{s}\la\tau-\xi^{m}\ra^{b}\widehat{w}(\xi,\tau)\big\|_{L^2_{\xi,\tau}}.
$$

Taking into consideration that the norms   $\| \cdot\|_{G^{\sigma,s}}$ and $\| \cdot\|_{H^{\sigma,s}}$, as well as  $\|\cdot\|_{X^{\sigma,s,b}}$ and  $\|\cdot\|_{Y^{\sigma,s,b}}$, are equivalent, with the procedure used in \cite{FP23}, one can easily prove the following local well-posedness result.

\begin{theorem}\label{lwp-mKdV-thm}
Let $\sigma_0>0$, $b>\frac12$ and $s\ge \frac14$. For each $u_0\in H^{\sigma_0,s}(\rr)$ there exists a time 
\begin{equation}\label{lifetime}
T_0=T_0(\|u_0\|_{H^{\sigma_0,s}})=\frac{c_0}{(1+\|u_0\|_{H^{\sigma_0,s}}^2)^d},\qquad c_0>0, \quad d>1,
\end{equation}
 such that the IVP \eqref{mKdV-IVP} admits a unique solution $u$ in $ C([-T_0,T_0] ; H^{\sigma_0,s}(\rr))\cap Y_{T_0}^{\sigma_0,s,b}$ satisfying
\begin{equation}\label{bound.u}
\sup\limits_{|t|\le T_0}\|u(t)\|_{H^{\sigma_0,s}}
\le
\|u\|_{Y^{\sigma_0,s,b}_{T_0}} 
\le 
C\|u_0\|_{H^{\sigma_0,s}}.
\end{equation}
\end{theorem}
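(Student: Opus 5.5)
The plan is to run the standard contraction argument in the Gevrey--Bourgain space $Y^{\sigma_0,s,b}_{T_0}$, after reducing it to the Sobolev case $\sigma=0$ through the operator $\cosh(\sigma_0 D_x)$. We write the problem in Duhamel form
\[
u(t)=\psi(t)U(t)u_0-\frac{\mu}{3}\,\psi_{T}(t)\int_0^t U(t-t')\,\partial_x\big(u^3\big)(t')\,dt',
\]
where $U(t)=e^{-t\partial_x^3}$ and $\psi,\psi_T$ are smooth time cutoffs. The linear estimates carry over verbatim to $Y^{\sigma_0,s,b}$, because $\cosh(\sigma_0\xi)$ depends only on $\xi$ and commutes with $U(t)$ and with the time cutoffs. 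Concretely,
\[
\|\psi U(t)u_0\|_{Y^{\sigma_0,s,b}}\le C\|u_0\|_{H^{\sigma_0,s}},
\]
and for $-\frac12<b'\le 0\le b\le b'+1$,
\[
\Big\|\psi_T\int_0^t U(t-t')F\,dt'\Big\|_{Y^{\sigma_0,s,b}}\le CT^{1-b+b'}\|F\|_{Y^{\sigma_0,s,b'}}.
\]
Both follow by applying the $X^{s,b}$ estimates to $\cosh(\sigma_0D_x)u_0$ and $\cosh(\sigma_0D_x)F$.

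The key step is the trilinear estimate
\[
\|\partial_x(u_1u_2u_3)\|_{Y^{\sigma_0,s,b'}}\le C\prod_{i=1}^3\|u_i\|_{Y^{\sigma_0,s,b}},\qquad s\ge\tfrac14,
\]
for suitable $b>\frac12>-b'$, say with $b'=-\frac12+\epsilon$ and $b=\frac12+\epsilon$. To reduce it to Tao's $X^{s,b}$ trilinear estimate, we use submultiplicativity of the weight on the convolution hyperplane $\xi=\xi_1+\xi_2+\xi_3$:
\[
\cosh(\sigma\xi)\le e^{\sigma|\xi|}\le \prod_{i=1}^3 e^{\sigma|\xi_i|}\le 8\prod_{i=1}^3\cosh(\sigma\xi_i),
\]
where the last inequality uses \eqref{cosh-exp-est}. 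On the Fourier side we may assume $\widehat{u_i}\ge0$ by replacing each with its absolute value, which does not change the norms and only increases the convolution. Setting $w_i=\mathcal F^{-1}\big(\cosh(\sigma_0\xi)|\widehat{u_i}|\big)$, we get
\[
\|\partial_x(u_1u_2u_3)\|_{Y^{\sigma_0,s,b'}}\le 8\|\partial_x(w_1w_2w_3)\|^{\sim}_{X^{s,b'}}.
\]
Here the $\sim$ means the multilinear estimate is applied at the level of positive Fourier multipliers. The right-hand side is bounded by $C\prod\|w_i\|_{X^{s,b}}=C\prod\|u_i\|_{Y^{\sigma_0,s,b}}$, using the known $X^{s,b}$ trilinear estimate for $s\ge\frac14$ (Tao, as in \cite{FP23}). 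The only care needed is that the Sobolev-case estimate is proved by duality with absolute values, so it indeed applies to the positive-multiplier version; I expect this bookkeeping to be the main point to check.

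With the estimates in hand, we close the fixed point argument. The map $\Phi$ sends the ball of radius $2C\|u_0\|_{H^{\sigma_0,s}}$ in $Y^{\sigma_0,s,b}_{T_0}$ into itself and contracts, provided
\[
CT_0^{\epsilon'}\|u_0\|^2_{H^{\sigma_0,s}}\le \tfrac14,
\]
which gives $T_0=c_0(1+\|u_0\|^2_{H^{\sigma_0,s}})^{-d}$ with $d=1/\epsilon'>1$, since $\epsilon'=1-b+b'<1$. The difference estimate uses the factorization $u^3-v^3=(u-v)(u^2+uv+v^2)$ together with the same trilinear bound. We then conclude in three parts:
\begin{itemize}
\item Uniqueness holds in the restricted space by the standard argument.
\item Continuity in time, $u\in C([-T_0,T_0];H^{\sigma_0,s})$, follows from the embedding $Y^{\sigma_0,s,b}\hookrightarrow C_tH^{\sigma_0,s}$ for $b>\frac12$, again via the $\cosh(\sigma_0D_x)$ reduction.
\item The bound \eqref{bound.u} comes from the fixed point radius together with this embedding.
\end{itemize}
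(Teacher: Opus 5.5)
Your argument is essentially the paper's. The paper gives no separate proof: it appeals to the equivalence $\frac12 e^{\sigma|\xi|}\le\cosh(\sigma\xi)\le e^{\sigma|\xi|}$ and to the contraction scheme of \cite{FP23}. That is exactly your reduction, via $\cosh(\sigma\xi)\le 8\prod_{j=1}^3\cosh(\sigma\xi_j)$, to Tao's $X^{s,b}$ trilinear estimate, combined with the linear estimates that commute with $\cosh(\sigma_0 D_x)$. The bookkeeping you worried about is harmless. Once $\widehat{w_j}\ge 0$, the modulus of the Fourier transform of $\partial_x(w_1w_2w_3)$ is exactly $|\xi|\,\widehat{w_1}*\widehat{w_2}*\widehat{w_3}$. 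So you are bounding the genuine $X^{s,b'}$ norm, and the Sobolev estimate applies verbatim.

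There is one concrete slip that, as written, stops the fixed point argument from closing. With your choice $b'=-\frac12+\epsilon$, $b=\frac12+\epsilon$, you get $\epsilon'=1-b+b'=0$. The Duhamel estimate then gives only $T^{0}$, so there is no small factor to absorb $\|u_0\|^2_{H^{\sigma_0,s}}$, and $d=1/\epsilon'$ is meaningless.

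You need $b'>b-1$, for example $b=\frac12+\epsilon$ and $b'=-\frac12+2\epsilon$ with $\epsilon$ small. The mKdV trilinear estimate for $s\ge\frac14$ is used with this kind of slack in $b'$. This is the choice implicit in \cite{FP23} and in the paper's Proposition~\ref{map.est}, where the time factor is $T^{1-(b-b')}$ with $0<1-(b-b')<1$. With it, $\epsilon'=\epsilon\in(0,1)$ and $d=1/\epsilon>1$, and the rest of your argument goes through.
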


Taking into account the presence of a damping term in the mKdVm equation, we first establish the local well-posedness result for the IVP \eqref{mKdVm-IVP} in $H^{\sigma,0}(\rr)$ as outlined below in Theorem \ref{lwp-mKdVm}. 
Before proceeding further, the function $a$, referred to as the damping term, is initially assumed to satisfy the following conditions.

\medno {\bf (A1) Damping effect.} There exists $\lambda>0$ such that
\begin{equation*}
a(x)\ge \lambda, 
\;\; \text{ for all } x\in \rr.
\end{equation*}
\medno {\bf (A2)  Regularity.} There exist positive constants $C$ and $R$ such that
\begin{equation}
\label{A2}
\|\p_x^ka\|_{L^{\infty}}
\le
C R^k k!, \;\;\text{ for all }k\in\mathbb{N}.
\end{equation}
Additionally, in order to control the constant that appears in \eqref{A2}, the next condition is also required.
\medno {\bf (A3) Controlling the norm.} 
The constant  $R>0$ appearing in \eqref{A2} satisfies
$$
R<\frac 1\sigma_0,
$$
where $\sigma_0$ is the radius of analyticity of the initial data.

Moreover, we introduce the function space $\mathcal{A}^\sigma$, where $\sigma > 0$, defined as the completion of the space of analytic functions, under the norm 
\begin{equation}
\label{A-norm}
\|a\|_{\mathcal{A}^{\sigma}}
=
\sum\limits_{k=0}^\infty (k+1)^\frac 14 \frac{\sigma^k}{k!} \|\p_x^ka\|_{L^\infty}.
\end{equation}
This norm is motivated by the result provided in Lemma \ref{bilinear.au.thm} below.
It is important to point out that the condition {\bf (A3)} implies $\|a\|_{\mathcal{A}^{\sigma_0}}<\infty$, which means $a\in \mathcal{A}^{\sigma_0}$.

We are now prepared to present the local well-posedness result  for the IVP \eqref{mKdVm-IVP} with initial data in \(H^{\sigma,0}(\mathbb{R})\).

\begin{theorem}
\label{lwp-mKdVm}
Let $m\ge 5$, $\sigma_0\ge 0$ and the damping  $a$ satisfies the conditions {\bf (A1)}-{\bf (A3)}.
For each $v_0\in H^{\sigma_0,0}(\rr)$ there exist $b> 1/2$ and a time
$$
T_0
=
\frac{c_0}{(1+ \|a\|_{\mathcal{A}^{\sigma_0}}+\|v_{0}\|^2_{H^{\sigma_0,0}})^d}, \qquad c_0>0, \quad d>1,
$$
such that the IVP \eqref{mKdVm-IVP} admits a unique solution
$v\in C([-T_0,T_0]; H^{\sigma_0,0}(\rr))\cap Y^{\sigma_0,0,b}_{m,T_0} $
satisfying
\begin{equation}
\label{sol.bound}
\sup\limits_{|t|\le T_0}\|v(t)\|_{H^{\sigma_0,0}}
\le
c\|v\|_{Y^{\sigma_0,0,b}_{m,T_0}}
\le
\|v_{0}\|_{H^{\sigma_0,0}}.
\end{equation}
Furthermore, the data-to-solution map is continuous.
\end{theorem}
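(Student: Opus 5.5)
The plan is to apply the standard contraction mapping argument in $Y^{\sigma_0,0,b}_{m,T}$, using the Duhamel formulation of \eqref{mKdVm-IVP}:
\[
v(t)=\psi(t)U_m(t)v_0-\psi_T(t)\int_0^t U_m(t-t')\big(\mu v^2\p_x v+a v\big)(t')\,dt',
\]
where $U_m(t)=e^{-t(-1)^{j+1}\p_x^m}$ is the linear group, whose symbol gives the phase $\tau-\xi^m$ in \eqref{Ysb-norm}, and $\psi$, $\psi_T$ are smooth time cutoffs.

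The first step collects the linear estimates in $Y_m^{\sigma,s,b}$:
\[
\|\psi U_m(t) v_0\|_{Y_m^{\sigma,0,b}}\le c\|v_0\|_{H^{\sigma,0}},
\qquad
\Big\|\psi_T\int_0^tU_m(t-t')F\,dt'\Big\|_{Y_m^{\sigma,0,b}}\le cT^{1-b+b'}\|F\|_{Y_m^{\sigma,0,b'}},
\]
with $-\frac12<b'\le 0\le \frac12<b\le b'+1$. We also need the embedding $Y_{m,T}^{\sigma,0,b}\subset C([-T,T];H^{\sigma,0})$. These follow exactly as in the $X^{s,b}$ theory, since the weight $\cosh(\sigma\xi)$ depends only on $\xi$ and commutes with the linear group.

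The second step is the trilinear estimate. We aim for
\[
\|\p_x(v_1v_2v_3)\|_{Y_m^{\sigma,0,b'}}\le C\prod_{i=1}^3\|v_i\|_{Y_m^{\sigma,0,b}}.
\]
It reduces to the $\sigma=0$ estimate of \cite{BFH}, which is valid for $s=0\ge s_m$ since $s_m<0$ when $m\ge5$. The reduction uses $\cosh(\sigma\xi)\le \cosh(\sigma\xi_1)\cosh(\sigma\xi_2)\cosh(\sigma\xi_3)\cdot c$ for $\xi=\xi_1+\xi_2+\xi_3$. This holds with $c=4$, because $\cosh(\sigma\xi)\le e^{\sigma|\xi|}\le \prod e^{\sigma|\xi_i|}\le 8\prod\cosh(\sigma\xi_i)$ by \eqref{cosh-exp-est}. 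We then apply the Sobolev estimate to the functions with Fourier transform $\cosh(\sigma\xi_i)|\widehat{v_i}|$; the positivity of these transforms is what allows the weight to be moved inside.

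The third step, the damping term, is where I expect the main difficulty. We need a bilinear estimate of the form
\[
\|av\|_{Y_m^{\sigma,0,b'}}\le C\|a\|_{\mathcal{A}^{\sigma}}\|v\|_{Y_m^{\sigma,0,b}},
\]
presumably the content of Lemma \ref{bilinear.au.thm}. Since $a$ is only bounded and not decaying, $\widehat a$ is a distribution, so the weight cannot be passed onto $\widehat a$ directly. Instead we expand the weight:
\[
\cosh(\sigma D_x)(av)=\sum_{k}\frac{\sigma^{k}}{k!}\,\Re\big(i^k\,\p_x^k\,(\cdots)\big).
\]
More concretely, we use the Leibniz-type identity coming from $\cosh(\sigma D_x)f=\frac12(f(\cdot+i\sigma)+f(\cdot-i\sigma))$ on analytic functions. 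This gives $\cosh(\sigma D_x)(av)$ as a combination of $a(\cdot\pm i\sigma)$ times $v(\cdot\pm i\sigma)$. Taylor-expanding $a(x\pm i\sigma)=\sum_k \frac{(\pm i\sigma)^k}{k!}\p_x^ka(x)$ converges by {\bf (A2)}–{\bf (A3)}. Each term is bounded by $\|\p_x^k a\|_{L^\infty}$ times an $L^2_{x,t}$ norm of the analytically continued $v$. We then use $Y_m^{0,0,b'}\supset L^2_{x,t}$ for $b'\le0$, and control $\|v(\cdot\pm i\sigma)\|$ by $\|\cosh(\sigma D_x)v\|+\|\sinh(\sigma D_x)v\|\lesssim \|v\|_{Y^{\sigma,0,0}}$. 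This loses a factor $T^{b-b'}$-type smallness only through the linear estimate. The polynomial weight $(k+1)^{1/4}$ in \eqref{A-norm} presumably absorbs a mild loss if one instead works with $b'>-\frac12$ and a Sobolev-in-time embedding. I would try the crude $L^2$ route first and fall back to the weighted sum if needed.

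The last step combines these bounds. The map $\Phi$ satisfies
\[
\|\Phi v\|\le c\|v_0\|+cT^{\epsilon}\big(\|a\|_{\mathcal{A}^{\sigma_0}}\|v\|+\|v\|^3\big),
\]
together with the analogous difference estimate. Choosing $T_0^\epsilon\sim (1+\|a\|_{\mathcal{A}^{\sigma_0}}+\|v_0\|^2)^{-1}$ gives a contraction on a ball of radius $2c\|v_0\|_{H^{\sigma_0,0}}$, which yields the stated form of $T_0$ with $d=1/\epsilon>1$, as well as uniqueness and \eqref{sol.bound}, up to constants. Continuity of the data-to-solution map follows from the same difference estimates.
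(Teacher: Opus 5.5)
Your proposal is correct and follows the paper's proof. The common scheme is: the Duhamel map $\Phi_T$ with time cutoffs, the linear estimate of Proposition~\ref{map.est} with gain $T^{1-(b-b')}$, the trilinear estimate of Lemma~\ref{trilinear.mKdVm} at $s=0\ge s_m$, the damping estimate of Lemma~\ref{bilinear.au.thm}, and a contraction on a ball of radius $\sim\|v_0\|_{H^{\sigma_0,0}}$ with $T\sim(1+\|a\|_{\mathcal{A}^{\sigma_0}}+\|v_0\|^2_{H^{\sigma_0,0}})^{-1/d'}$. For the trilinear step, your reduction to $\sigma=0$ via positivity is the standard one. (Your chain of inequalities gives the constant $8$ rather than $4$; $4$ also holds via $\cosh(x+y)\le 2\cosh x\cosh y$, and either is fine.)

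The one step where you argue on your own is the damping estimate. The paper does not prove it but takes it from Lemma~2.3 of \cite{Wa}, and the weight $(k+1)^{1/4}$ in \eqref{A-norm} is tailored to that argument. Your ``crude $L^2$ route'' already closes, so no fallback is needed. The ingredients are:
\begin{itemize}
\item the shift identity $\cosh(\sigma D_x)(av)=\frac12\sum_{\pm}\tilde a(\cdot\pm i\sigma)\,\tilde v(\cdot\pm i\sigma)$;
\item $\la\tau-\xi^m\ra^{b'}\le 1$ for $b'\le 0$;
\item the bound $\|\tilde a(\cdot\pm i\sigma)\|_{L^\infty}\le\sum_k\frac{\sigma^k}{k!}\|\p_x^k a\|_{L^\infty}$, which is finite by {\bf (A2)}--{\bf (A3)} since $\sigma R\le\sigma_0R<1$;
\item the bound $\|\tilde v(\cdot\pm i\sigma)\|_{L^2_{x,t}}\le 2\|v\|_{Y_m^{\sigma,0,0}}$.
\end{itemize}
Together these give $\|av\|_{Y_m^{\sigma,0,b'}}\le 2\sup_{|y|\le\sigma}\|\tilde a(\cdot+iy)\|_{L^\infty}\|v\|_{Y_m^{\sigma,0,b}}$. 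This is slightly stronger than Lemma~\ref{bilinear.au.thm}, since the prefactor is dominated by $\|a\|_{\mathcal{A}^\sigma}$. It exploits the fact that $\cosh(\sigma\xi)$ is exactly an average of two complex shifts, which the $e^{\sigma|\xi|}$ weight used in \cite{Wa} is not.

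The one point you should state explicitly is that $av$ lies in the strip Paley--Wiener class, so that the shift identity may be applied to the product. This holds because $\sup_{|y|<\sigma}\|(\tilde a\tilde v)(\cdot+iy)\|_{L^2_x}<\infty$. Since $e^{\pm\sigma D_x}$ is never applied to $a$ alone, no hypothesis $a\in H^{\sigma_0,0}(\rr)$ enters, consistent with Remark~\ref{a-remark}.
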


In order to extend the local solutions obtained in Theorems \ref{lwp-mKdV-thm} and \ref{lwp-mKdVm} globally in time and obtain the lower bound for the evolution of the radius of analyticity, as mentioned above, we will  derive almost conserved quantities in the $H^{\sigma, 2}(\rr)$ and $H^{\sigma,0}(\rr)$ spaces, respectively. 
For this, we consider the following quantities in the modified Gevrey class $H^{\sigma,s}(\rr)$ 
\begin{equation}
\label{A}
\begin{split}
A_\sigma(t)
&:=
\|\cosh(\sigma D_x)u(t)\|^2_{L_x^2}+\|\p_x[\cosh(\sigma D_x)u(t)]\|^2_{L_x^2} +\|\p_x^2[\cosh(\sigma D_x)u(t)]\|^2_{L_x^2}\\
& \quad-
\frac{\mu}{6}\|\cosh(\sigma D_x)u(t)\|_{L^4_x}^4
-
\frac{5\mu}{3}
\|[\cosh(\sigma D_x)u]\cdot\p_x[\cosh(\sigma D_x)u](t)\|_{L^2_x}^2\\
&\quad +
\frac{1}{18}
\|\cosh(\sigma D_x)u(t)\|_{L_x^6}^6,
\end{split}
\end{equation}
which comes from \eqref{AII} by replacing $e^{\sigma |D_x|}$ by $\cosh(\sigma D_x)$, and
\begin{equation}
\label{M}
M_\sigma(t):=\|\cosh(\sigma D_x)v(t)\|_{L^2_x}^2,
\end{equation}
which, for $\sigma =0$, is the $L^2$-conserved quantity for the IVP \eqref{mKdVm-IVP}.
 
With this framework at hand, in the following theorems we will derive almost conserved quantities that play a crucial role in proving the main results of this work. We start by stating the almost conserved quantity associated with the IVP \eqref{mKdV-IVP}.
\begin{theorem}
\label{ACL-mKdV-thm}
Let $\sigma >0$, $u_0\in H^{\sigma,2}(\rr)$ and $u\in C([-T_0,T_0]; H^{\sigma,2}(\rr))$ be the local solution of the IVP \eqref{mKdV-IVP} given by Theorem \ref{lwp-mKdV-thm}.
Let $A_\sigma(t)$ be as defined in \eqref{A}. Then for some $b>1/2$ and  for all $t\in [0,T_0]$, we have
\begin{equation}
\label{ACL-mKdV}
A_\sigma(t)
\le
A_\sigma(0) + C\sigma^2 \|u\|^4_{Y^{\sigma,2,b}_{T_0}} \big(1+\|u\|^2_{Y^{\sigma,2,b}_{T_0}}+\|u\|^4_{Y^{\sigma,2,b}_{T_0}}\big).
\end{equation}
Moreover, considering $\mu=-1$, one has
\begin{equation}
\label{ACL-mKdV-0}
A_\sigma(t)
\le
A_\sigma(0) + C\sigma^2 A_\sigma(0)^2 \big(1+A_\sigma(0)+A_\sigma(0)^2\big).
\end{equation}
\end{theorem}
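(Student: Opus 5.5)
Set $U=\cosh(\sigma D_x)u$. Applying $\cosh(\sigma D_x)$ to \eqref{mKdV-IVP} gives
\[
\p_t U+\p_x^3U+\mu U^2\p_xU=F(U),\qquad F(U):=\mu\big(U^2\p_xU-\cosh(\sigma D_x)(u^2\p_xu)\big),
\]
where, on the Fourier side, with $\xi=\xi_1+\xi_2+\xi_3$ and $u=\cosh(\sigma D_x)^{-1}U$,
\begin{equation*}
\widehat{F(U)}(\xi)=\frac{i\mu}{3}\int_{\xi=\xi_1+\xi_2+\xi_3}\xi\Big(1-\frac{\cosh(\sigma\xi)}{\cosh(\sigma\xi_1)\cosh(\sigma\xi_2)\cosh(\sigma\xi_3)}\Big)\widehat U(\xi_1)\widehat U(\xi_2)\widehat U(\xi_3).
\end{equation*}
The functional $A_\sigma(t)$ is exactly the third mKdV conserved quantity (the $H^2$-level energy) evaluated at $U$. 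So when we differentiate in time, the contributions of $\p_x^3U+\mu U^2\p_xU$ cancel exactly as in the classical conservation law. Only the terms that are linear in $F$ remain:
\[
\frac{d}{dt}A_\sigma(t)=2\int (1-\p_x^2+\p_x^4)U\,F\,dx-\frac{2\mu}{3}\int U^3F\,dx-\frac{10\mu}{3}\int\big(U(\p_xU)^2\,F-\p_x(U^2\p_xU)\,F\big)dx+\frac13\int U^5F\,dx,
\]
up to the precise algebraic form obtained by varying each term of \eqref{A}. Integrating over $[0,t]$ with $t\le T_0$ then expresses $A_\sigma(t)-A_\sigma(0)$ as a sum of space–time integrals. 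These are of degree $4$ (from the quadratic terms), degree $6$ (from the quartic terms) and degree $8$ (from the sextic term), and each contains one factor $F$. This matches the polynomial $\|u\|^4(1+\|u\|^2+\|u\|^4)$ in \eqref{ACL-mKdV}.

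The key point is to gain $\sigma^2$ from the symbol of $F$. I would use the elementary bound
\[
\Big|1-\frac{\cosh(\sigma\xi)}{\prod_j\cosh(\sigma\xi_j)}\Big|\lesssim \sigma^2\big(|\xi_1||\xi_2|+|\xi_1||\xi_3|+|\xi_2||\xi_3|\big).
\]
This should follow from the addition formula $\cosh(a+b)=\cosh a\cosh b+\sinh a\sinh b$, applied twice. Expanding $\cosh(\sigma(\xi_1+\xi_2+\xi_3))/\prod\cosh(\sigma\xi_j)$ gives $1+\sum_{j<k}\tanh(\sigma\xi_j)\tanh(\sigma\xi_k)$, so the difference is exactly $\sum_{j<k}\tanh\tanh$. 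Then $|\tanh x|\le\min(1,|x|)$ yields the bound; this is the improvement over \eqref{exp-est} that \eqref{cosh-1} signals. As a result, $F$ behaves like $\sigma^2\,\p_x\big(\p_xU\,\p_xU\,U\big)$ with absolute values on the Fourier transforms, i.e. a trilinear term costing two extra derivatives distributed on two distinct factors.

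The main obstacle is bounding the resulting multilinear space–time integrals by $\|U\|_{Y^{0,2,b}}$-norms (equivalently $\|u\|_{Y^{\sigma,2,b}_{T_0}}$). The worst term is the quartic one $\int_0^t\!\int \p_x^4U\,F$, which carries a total of $4+1+2=7$ derivatives over four factors of $H^2$-regularity. I would handle it by integrating by parts to put $\p_x^2$ on $U$ and $\p_x^2$ on $F$, and by working with a time cutoff $\chi_{[0,t]}$ (with $b$ close to $1/2$). I would then use Plancherel plus the trilinear/$L^4$ Strichartz and Kato smoothing estimates in $X^{s,b}$ (e.g. $\|D_x^{1/4}... \|$ and $\|\p_x w\|_{L^\infty_xL^2_t}\lesssim\|w\|_{X^{0,b}}$, $\|w\|_{L^4_xL^\infty_t}\lesssim\|w\|_{X^{1/4,b}}$) together with the algebraic resonance identity for $\tau-\xi^3$ when frequencies are unbalanced. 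Since the bound above puts the extra derivatives on two different input frequencies, no single factor should exceed $H^2$ plus smoothing. The higher-degree terms contain fewer derivatives per factor and follow from Sobolev embedding and Hölder.

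Finally, for \eqref{ACL-mKdV-0} with $\mu=-1$, all the nonlinear terms in $A_\sigma$ are nonnegative. Hence $\|U(t)\|_{H^2}^2\le A_\sigma(t)$, and by Sobolev $A_\sigma(0)\lesssim \|u_0\|_{H^{\sigma,2}}^2(1+\cdots)$. Combining this with \eqref{bound.u}, namely $\|u\|_{Y^{\sigma,2,b}_{T_0}}\le C\|u_0\|_{H^{\sigma,2}}\le C A_\sigma(0)^{1/2}$, turns \eqref{ACL-mKdV} into \eqref{ACL-mKdV-0}.
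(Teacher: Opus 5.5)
Your overall structure is the paper's. You set $U=\cosh(\sigma D_x)u$, note that $\frac{d}{dt}A_\sigma$ is linear in $F(U)$ because $A_\sigma$ is the $H^2$-level mKdV invariant evaluated at $U$, and extract $\sigma^2$ from the symbol. For \eqref{ACL-mKdV-0} you use $\|u_0\|^2_{H^{\sigma,2}}\lesssim A_\sigma(0)$ when $\mu=-1$ together with \eqref{bound.u}, as the paper does. Your identity $\cosh(\sigma\xi)\prod_j\sech(\sigma\xi_j)-1=\sum_{j<k}\tanh(\sigma\xi_j)\tanh(\sigma\xi_k)$ is an exact, cleaner form of Lemma~\ref{cosh-est} with $\theta_1=\theta_2=1$; it even gives $F(U)=-\mu\,\p_x\big(U\,(\tanh(\sigma D_x)U)^2\big)$. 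Fixed-time Sobolev and H\"older bounds do suffice for $R_1$, $R_2$, $R_3$: once you use $|\xi|\le 3\xi_{\max}$, no factor there carries more than two derivatives, and $T_0\lesssim1$. The gap is the term you yourself call the main obstacle, $R_4=2\int_0^t\!\int \p_x^2U\,\p_x^2F\,dx\,dt'$. You list several tools but never fix an estimate, and the combination you indicate does not close.

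Here is why. After $|\xi|\le 3\xi_{\max}$, the integrand of $R_4$ is controlled by $\sigma^2|\xi|^3|\widehat U(\xi)|$ against a product in which the top-frequency input carries $|\xi_{\max}|^3$, the middle one carries $|\xi_{\text{med}}|$, and the smallest carries nothing. So two factors carry three derivatives each while $U$ is only in $Y^{2,b}$, and both must gain a full derivative at the same time. Your remark that ``no single factor exceeds $H^2$ plus smoothing'' is necessary but not sufficient. Putting both of these factors in the Kato norm $L^\infty_xL^2_T$ forces, by H\"older, the other two into $L^2_xL^\infty_T$. The missing ingredient is the local-in-time maximal estimate $\|U\|_{L^2_xL^\infty_T}\lesssim\|U\|_{Y^{3/4+,b}_T}$ for $T\lesssim1$, i.e.\ \eqref{kpv-1} transferred as in \eqref{U-L2}. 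With it, the middle factor costs $7/4+\le 2$ derivatives and the smallest costs $3/4+$, which is exactly \eqref{est-r4}. The estimate you cite, $\|w\|_{L^4_xL^\infty_t}\lesssim\|w\|_{X^{1/4,b}}$, cannot replace it: with two $L^\infty_x$ factors the spatial exponents add up to $\tfrac14+\tfrac14\neq1$. Strichartz or $L^2_{x,t}$ bounds without smoothing would need $H^3$ on the top factor. A bilinear-smoothing or resonance argument might substitute, but you would have to carry it out; as written, \eqref{ACL-mKdV} is not established. One more caution: if you pass to moduli of Fourier transforms, do not replace $\mathcal{F}_xU(\cdot,t)$ by its modulus at each fixed $t$ before applying smoothing estimates. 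That destroys the Airy phase and does not preserve $Y^{s,b}$ norms. Your product formula for $F$ lets you avoid moduli altogether by working with Littlewood--Paley pieces of $U$ and $\tanh(\sigma D_x)U$.
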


To derive an almost conserved quantity  associated with the IVP \eqref{mKdVm-IVP}, an additional condition  $a\in H^{\sigma_0,0}(\rr)$ on the damping function $a$ is required.  The necessity of this extra condition on the damping term is explained in Remark \ref{a-remark} below. Now we state the almost conserved quantity for the IVP \eqref{mKdVm-IVP}.

\begin{theorem}
\label{acq-mKdVm}
Let $m\ge 5$, $0<\sigma\le \sigma_0$, $v_0\in H^{\sigma,0}(\rr)$, $a\in H^{\sigma_0,0}(\rr)$ satisfying {\bf (A1)}-{\bf (A3)} and $v\in C([-T_0,T_0];H^{\sigma,0}(\rr))$ be the local solution to the IVP \eqref{mKdVm-IVP} guaranteed by  Theorem \ref{lwp-mKdVm}. Then, for all $t\in [0,T_0]$, we have
\begin{equation}
\label{acq-mKdVm.eq}
M_\sigma(t)
\le
e^{-2\lambda t}M_\sigma(0)
+ 
C_1\big(\sigma^\theta M_\sigma(0) +\sigma \|a\|_{\mathcal{A}^{\sigma_0}}\big)M_\sigma(0),
\end{equation}
for any $\theta\in[0,\min\{1,- s_m\}]$  (see \eqref{s.index} for the definition of $s_m$).
\end{theorem}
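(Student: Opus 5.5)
Set $V=\cosh(\sigma D_x)v$ and apply $\cosh(\sigma D_x)$ to the equation in \eqref{mKdVm-IVP}. This gives
\[
\partial_t V+(-1)^{j+1}\partial_x^m V+\mu V^2\partial_x V+aV=-F_1(v)-F_2(v),
\]
with
\[
F_1(v)=\mu\big[\cosh(\sigma D_x)(v^2\partial_x v)-V^2\partial_x V\big],\qquad F_2(v)=\cosh(\sigma D_x)(av)-aV .
\]
Multiply by $V$ and integrate in $x$. The dispersive term and $\mu V^3\partial_xV$ integrate to zero, and {\bf (A1)} gives $\int aV^2\ge\lambda\|V\|_{L^2}^2$. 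Hence
\[
\frac{d}{dt}M_\sigma(t)+2\lambda M_\sigma(t)\le -2\int V\big(F_1(v)+F_2(v)\big)\,dx .
\]
Multiplying by $e^{2\lambda t}$ and integrating on $[0,t]$, with $e^{-2\lambda(t-t')}\le1$ for $t'\le t$, yields
\[
M_\sigma(t)\le e^{-2\lambda t}M_\sigma(0)+2\Big|\int_0^t\!\!\int \chi_{[0,t]}V(F_1+F_2)\,dx\,dt'\Big| .
\]
It then suffices to bound the two space-time integrals by $C\sigma^\theta\|v\|^4_{Y^{\sigma,0,b}_{m,T_0}}$ and $C\sigma\|a\|_{\mathcal{A}^{\sigma_0}}\|v\|^2_{Y^{\sigma,0,b}_{m,T_0}}$. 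We then finish with \eqref{sol.bound}, using $\|v\|_{Y}\lesssim\|v_0\|_{H^{\sigma,0}}=M_\sigma(0)^{1/2}$.

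\textbf{The nonlinear term $F_1$.} On the Fourier side, with $\xi=\xi_1+\xi_2+\xi_3$, the symbol of $F_1$ is
\[
\frac{i\mu}{3}\,\xi\,\big[\cosh(\sigma\xi)-\cosh(\sigma\xi_1)\cosh(\sigma\xi_2)\cosh(\sigma\xi_3)\big]\prod_k\widehat v(\xi_k),
\]
where the product formula for $\cosh$ has been used. We need an elementary inequality of the type
\[
\big|\cosh(\sigma\xi)-\textstyle\prod_k\cosh(\sigma\xi_k)\big|\lesssim(\sigma|\xi_{\max}|)^{\theta}\cosh(\sigma\xi_1)\cosh(\sigma\xi_2)\cosh(\sigma\xi_3),\qquad\theta\in[0,1].
\]
One way to get it is to expand $\cosh(\sigma(\xi_1+\xi_2+\xi_3))$ into products of $\cosh$ and $\sinh$ and to use $|\sinh x|\le |x|^\theta\cosh x$ together with $\cosh x-1\le |x|^{2\theta}\cosh x$; only first order gains are needed here. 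This gives
\[
\|F_1(v)\|_{Y^{0,-b'}_{m}}\lesssim\sigma^\theta\big\|\,|D_x|^{\theta}\partial_x\textstyle\prod_k V_k\big\|_{Y^{0,-b'}_m}
\]
in the sense of a multiplier bound with nonnegative Fourier transforms. We then invoke the trilinear estimate of \cite{BFH} in $Y^{s,b}_m$, which holds for $s\ge s_m$. Since $s_m<0$, the estimate at $s=-\theta$ with $\theta\le -s_m$ absorbs the extra factor $|\xi_{\max}|^\theta$. The condition $\theta\le1$ comes from the $\sinh$ bound. The contribution is then $\le C\sigma^\theta\|V\|_{Y^{0,b}_m}^4$ after duality and Cauchy--Schwarz with $\|\chi V\|_{Y^{0,1-b}}$.

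\textbf{The damping term $F_2$.} Write
\[
F_2=\big[\cosh(\sigma D_x),a\big]v ,
\]
and expand $a$ in its Taylor/analytic structure, or directly in frequency: the symbol is $\cosh(\sigma(\eta+\zeta))-\cosh(\sigma\zeta)$, with $\eta$ the frequency of $a$. By the mean value theorem this is bounded by $\sigma|\eta|\cosh(\sigma\eta)\cosh(\sigma\zeta)$ up to harmless terms. In physical space this corresponds to a sum over $k\ge1$ of $\frac{\sigma^k}{k!}\,\partial_x^k a\cdot(\text{derivatives of } v \text{ shifted into the } \cosh/\sinh \text{ weights})$, which we control with Lemma \ref{bilinear.au.thm} (the bilinear estimate for $av$ producing the weights $(k+1)^{1/4}$ in \eqref{A-norm}). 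Pulling out one factor of $\sigma$ and using $\sigma\le\sigma_0$, the sum is bounded by
\[
\sigma\|a\|_{\mathcal{A}^{\sigma_0}}\|V\|^2_{Y^{0,b}_m}.
\]
The hypothesis $a\in H^{\sigma_0,0}$ is used to make the Fourier-side manipulation of $a$ legitimate, as explained in Remark \ref{a-remark}.

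\textbf{Main obstacle.} The hard step is the $F_2$ commutator bound, because $a\notin L^2$-type spaces a priori and one must simultaneously gain a full power of $\sigma$ and sum the Taylor series with only the $\mathcal{A}^{\sigma_0}$ norm. I would first try the frequency-side mean value estimate combined with Lemma \ref{bilinear.au.thm}. If that fails, I would fall back on the physical-space expansion $\cosh(\sigma D_x)(av)=\sum_k\frac{(i\sigma)^k}{k!}\,\partial_x^ka\,\cdot\,(\ldots)$.
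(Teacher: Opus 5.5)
Your overall architecture matches the paper's: conjugate by $\cosh(\sigma D_x)$, use the energy identity with (A1), apply Gronwall, and split the error into the cubic commutator and the damping commutator. However, two steps fail as written.

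\textbf{The time weight.} From $\frac{d}{dt}M_\sigma+2\lambda M_\sigma\le h$ you only get $M_\sigma(t)\le e^{-2\lambda t}M_\sigma(0)+\int_0^t e^{-2\lambda(t-t')}h(t')\,dt'$. Since $h=-2\int V(F_1+F_2)\,dx$ has no sign, the bound $e^{-2\lambda(t-t')}\le 1$ does not let you replace this by $\bigl|\int_0^t h\,dt'\bigr|$. It only gives $\int_0^t|h|\,dt'$, which $L^2_x$-level Bourgain estimates cannot reach. The paper keeps the weight inside the pairing and uses Lemma~\ref{Wa-3.1}, i.e.\ $\|\chi_{[0,t]}e^{-2\lambda(t-\cdot)}V\|_{Y^{0,1-b}_m}\lesssim\|V\|_{Y^{0,b}_{m,T_0}}$. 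Alternatively, Bonnet's second mean value theorem gives $\bigl|\int_0^t e^{-2\lambda(t-t')}h\,dt'\bigr|\le\sup_{s\in[0,t]}\bigl|\int_s^t h\,dt'\bigr|$, and you would then estimate with $\chi_{[s,t]}$ uniformly in $s$.

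\textbf{The cubic term.} You weaken the symbol bound to $(\sigma\xi_{\max})^\theta$, and this loss cannot be absorbed. First, $|\xi_{\max}|^\theta$ is not bounded by $|\xi|^\theta$, so the displayed bound with $|D_x|^\theta$ on the output does not follow. Second, the trilinear estimate at $s=-\theta$ only supplies the weight $\langle\xi\rangle^{-\theta}\prod_k\langle\xi_k\rangle^{\theta}$. This weight is $O(1)$ in the high--low--low regime $|\xi_1|,|\xi_2|\lesssim 1\ll|\xi_3|$, where $|\xi_{\max}|^\theta$ is unbounded. What is needed is the median-frequency bound of Lemma~\ref{cosh-est}. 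Dividing your expansion by $\prod_k\cosh(\sigma\xi_k)$ gives $\tanh(\sigma\xi_1)\tanh(\sigma\xi_2)+\tanh(\sigma\xi_1)\tanh(\sigma\xi_3)+\tanh(\sigma\xi_2)\tanh(\sigma\xi_3)$. Each term is a product of two $\tanh$ factors, hence $\lesssim(\sigma\xi_{\text{med}})^\theta$. Then $\xi_{\text{med}}^\theta\lesssim\langle\xi\rangle^{-\theta}\langle\xi_{\text{med}}\rangle^\theta\langle\xi_{\max}\rangle^\theta$ (since $|\xi|\le 3\xi_{\max}$) is exactly what the $s=-\theta$ estimate absorbs, as in Lemma~\ref{Xsb-F.mKdVm}.

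\textbf{The damping term.} Your primary plan does not produce the stated bound. Putting absolute values on $\widehat a$ after the mean value theorem leads to a Fourier-$L^1$ type norm of $a$, not to $\|a\|_{\mathcal{A}^{\sigma_0}}$, which is built from $L^\infty$ norms of $\partial_x^k a$. Also, Lemma~\ref{bilinear.au.thm} bounds $av$, not a commutator, so it yields no factor of $\sigma$. Your physical-space fallback is the right idea and is essentially the paper's Lemma~\ref{aU-est}. The paper uses the exact factorization $e^{\pm\sigma D_x}(av)=(e^{\pm\sigma D_x}a)(e^{\pm\sigma D_x}v)$ to write $G(V)=-\frac12\sum_{\pm}(e^{\pm\sigma D_x}-1)a\cdot e^{\pm\sigma D_x}v$. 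It then bounds $\|(e^{\pm\sigma D_x}-1)a\|_{L^\infty}\le\frac{\sigma}{\sigma_0}\|a\|_{\mathcal{A}^{\sigma_0}}$ by Taylor expanding the holomorphic extension of $a$. Equivalently, $F_2=\sum_{k\ge1}\frac{\sigma^k}{k!}(D_x^k a)\,\cosh^{(k)}(\sigma D_x)v$ with $\|\cosh^{(k)}(\sigma D_x)v\|_{L^2}\le\|V\|_{L^2}$. This is a pointwise-in-time $L^2_x$ bound, so no Bourgain norms are needed, and the time weight is harmless for this term.

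Your remark that $a$ is not in an $L^2$-type space is correct: (A1) is in fact incompatible with $a\in H^{\sigma_0,0}(\rr)\subset L^2$. What the argument actually uses is the bounded holomorphic extension of $a$ to the strip $|\operatorname{Im}z|<1/R$, and $1/R>\sigma_0$. Conditions (A2)--(A3) already supply this, and your expansion needs nothing more.
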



\begin{remark}\label{w-remark}
Concerning the almost conserved quantity \eqref{acq-mKdVm.eq}, the exponential decay $e^{-2\lambda t}$ clearly demonstrates the dissipative effect induced by the damping term in \eqref{mKdVm-IVP}. This phenomenon was previously studied by Wang in \cite{Wa}, where a damping term was introduced in the KdV equation. The author in \cite{Wa} constructed an almost conserved quantity in $G^{\sigma, 0}$-space  of the form 
\begin{equation}
\label{acq-W}
\|u(\delta)\|^2_{G^{\sigma,0}}
\le
e^{-2\lambda\delta}\|u(0)\|^2_{G^{\sigma,0}}
+
C_1\sigma^\alpha\|u(0)\|^3_{G^{\sigma,0}}
+ C_2\|a\|_{L^{\infty}}\|u_0\|_{L^2}\|u_0\|_{G^{\sigma, 0}}, \qquad \alpha>0,
\end{equation}
where $\delta>0$ is the lifespan of the local solution $u$ of the IVP for the KdV equation with damping $a$ satisfying the conditions {\bf (A1)}-{\bf (A3)}. The last term with $C_2$ in \eqref{acq-W} is referred to as a ``negligible term" in \cite{Wa}. However, in the iteration process to extend the local solution globally in time using \eqref{acq-W}, this negligible term accumulates. As such  the local existence time in each step of iteration  tends to shrink and one may not  reach the arbitrarily large given time $T>0$. This happens because there is no $\sigma$ in the negligible term which would, somehow, facilitate the control of the growth of the norms involved. 

In our study, we considered $H^{\sigma,0}(\rr)$-space and obtained a simpler almost conserved quantity \eqref{acq-mKdVm.eq} for the mKdVm equation with damping, without any ``negligible terms''. This simplification is  applicable to the KdV equation as well. The refined formulation clarifies the proof and facilitates a more streamlined approach to extending solutions globally in time.

It is worth noting that the additional hypothesis $a\in H^{\sigma_0,0}(\rr)$ appears to be necessary in Wang’s framework as well, as discussed in Remark \ref{a-remark} below. While Wang’s almost conserved quantity is valid under the condition $a\in H^{\sigma_0,0}(\rr)$, the proof of the global extension of solutions given therein encounters challenges due to the presence of ``negligible terms''. By employing the techniques developed in our study, one can simplify the proof of the global extension for the KdV equation with damping. This alternative approach not only verifies the main result of Wang’s work but also demonstrates its validity in $H^{\sigma,0}(\rr)$-space with a more direct and effective analysis.

Furthermore, it is important to clarify that the analyticity assumption on the damping coefficient $a(x)$ is a technical necessity within the framework of analytic Bourgain spaces. Since our analysis relies on the control of the solution using the $\cosh(\sigma D_x)$ weight, the interaction term $a(x)u$ must preserve this exponential decay on the Fourier side. In the Fourier domain, this interaction becomes a convolution $\widehat{a} * \widehat{u}$. If $a(x)$ lacked analyticity, this convolution could potentially lead to a loss of regularity in the analytic sense, preventing the boundedness of the estimates for the almost conserved quantity. Therefore,  analyticity of the damping is a strong requirement in our approach to ensure that the damping effect does not introduce singularities that would cause the radius of analyticity to collapse.

We believe that these improvements contribute to a deeper understanding of the dissipative mechanisms induced by damping and provide a robust framework for further studies in this direction.
\end{remark}

As observed in \cite{FP24} (see Remark $4.1$ there), there is a price to pay when we use the $\cosh(\sigma \xi)$ instead of $e^{\sigma |\xi|}$ as the weight function. With the hyperbolic cosine, one needs to absorb two derivatives  while constructing the almost conserved quantity \eqref{ACL-mKdV}. 
The discovery of how to handle these extra derivatives can be found in the proof of Theorem \ref{ACL-mKdV-thm},  particularly during the derivation of inequality \eqref{est-r4}. 
The key ingredients to obtain \eqref{est-r4} are the following  estimates satisfied by the linear group associated with the linear part of the IVP \eqref{mKdV-IVP} 
\begin{equation}
\label{kpv-1}
\| e^{-t\p_x^3} u_0\|_{L_x^2L^\infty_T}
\le
C\|u_0\|_{H^s}, \text{ for all } T\lesssim 1, \; u_0\in H^s(\rr) \;\text{ and }\; s> \frac 34,
\end{equation}
whose proof can be found in \cite{KPV91-1} (see Corollary 2.9 there)  and
\begin{equation}
\label{kpv-2}
\|\p_x e^{-t\p_x^3} u_0\|_{L_x^\infty L^2_T}
\le
C\|u_0\|_{L^2}, \text{ for all } u_0\in L^2(\rr),
\end{equation}
whose proof can be found in \cite{KPV93}, see also \cite{KPV91}.

Finally, we are in a position to state the main results of this work that deal with the  evolution in time of the radius of spatial analyticity of the local solutions when extended globally.  

The first main result is concerned with a new lower bound for the radius of analyticity of the solution to the IVP \eqref{mKdV-IVP} and reads as follows.
\begin{theorem}\label{global-mKdV-thm}
Let $\sigma_0>0$, $s\ge 1/4$, $u_0\in H^{\sigma_0,s}(\rr)$ and $u\in C([-T_0,T_0];H^{\sigma_0,s}(\rr))$ be the local solution of the IVP \eqref{mKdV-IVP} guaranteed by Theorem \ref{lwp-mKdV-thm}. Then, in the defocusing case $(\mu = -1)$, for any $T\ge T_0$, the solution $u$ can be extended globally in time and
$$
u\in C\big([-T,T]; H^{\sigma(T),s}(\rr)\big),
\quad\text{with}\quad
\sigma(T)\ge\min\Big\{\sigma_0, cT^{-\frac 12}\Big\},
$$
with $c$ being a positive constant depending on $s$, $\sigma_0$ and $\|u_0\|_{H^{\sigma_0, s}}$. 
\end{theorem}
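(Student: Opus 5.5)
We follow the iteration scheme of \cite{FP23, FP24}, with Theorem \ref{ACL-mKdV-thm} (in its form \eqref{ACL-mKdV-0}) supplying the almost conservation law. First we reduce to $s=2$. If $s\ge 2$ we use $H^{\sigma_0,s}\subset H^{\sigma_0,2}$, and if $\frac14\le s<2$ we use the embedding $H^{\sigma_0,s}\subset H^{\sigma_1,2}$ for, say, $\sigma_1=\sigma_0/2$, which holds since $\langle\xi\rangle^{2-s}\cosh(\sigma_1\xi)\lesssim_{s,\sigma_0}\cosh(\sigma_0\xi)$. After the global result at level $s=2$ is obtained, we return to level $s$. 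For $s\le 2$ this uses $H^{\sigma,2}\subset H^{\sigma,s}$. For $s>2$ we use the uniqueness and persistence part of Theorem \ref{lwp-mKdV-thm}: the $H^{\sigma(T),s}$ norm is propagated on each local interval, whose length depends only on the $H^{\sigma(T),1/4}$ norm, and this norm is already controlled. This last point can also be handled by the embedding $H^{\sigma,2}\subset H^{\sigma/2,s}$, at the price of halving $\sigma$, which does not affect the rate $T^{-1/2}$. Hence it suffices to treat $u_0\in H^{\sigma_0,2}$.

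Next, in the defocusing case $\mu=-1$ every nonquadratic term in \eqref{A} is nonnegative, so $A_\sigma(t)\ge\|\cosh(\sigma D_x)u(t)\|_{H^2}^2\sim\|u(t)\|_{H^{\sigma,2}}^2$. Conversely, Sobolev embedding gives $A_\sigma(t)\lesssim \|u\|_{H^{\sigma,2}}^2(1+\|u\|_{H^{\sigma,2}}^2)^2$. Fix $T$ large and $\sigma\le\sigma_0$ to be chosen. Since $A_\sigma(0)\le A_{\sigma_0}(0)=:A_0<\infty$, we aim to keep $A_\sigma(t)\le 2A_0$ on $[0,T]$. Under this bound the local time from \eqref{lifetime} is uniform: $T_0\ge c_0(1+CA_0)^{-d}=:\delta$. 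We then iterate Theorem \ref{lwp-mKdV-thm} and \eqref{ACL-mKdV-0} on the intervals $[k\delta,(k+1)\delta]$. This gives
$$A_\sigma(n\delta)\le A_\sigma(0)+nC\sigma^2(2A_0)^2\big(1+2A_0+4A_0^2\big),$$
and this remains below $2A_0$ as long as $nC\sigma^2 A_0(1+A_0)^2\lesssim 1$. The induction closes because each step only uses the bound $2A_0$ at the previous time. We need $n\sim T/\delta$ steps, so the requirement becomes $\sigma^2\lesssim \delta/(T A_0(1+A_0)^2)$. Choosing $\sigma=\min\{\sigma_0, cT^{-1/2}\}$, with $c$ depending on $A_0$, hence on $\|u_0\|_{H^{\sigma_0,s}}$, $s$ and $\sigma_0$, finishes the forward-in-time argument. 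Negative times follow by the symmetry $u(x,t)\mapsto u(-x,-t)$.

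The main obstacle is a technical point: \eqref{ACL-mKdV-0} is stated for data in $H^{\sigma,2}$ on the local interval given by Theorem \ref{lwp-mKdV-thm} at level $s=2$. We must check that restarting at time $k\delta$ with data $u(k\delta)\in H^{\sigma,2}$ is legitimate, and that the lifespan is controlled by $A_\sigma(k\delta)$ rather than by $\|u(k\delta)\|_{H^{\sigma_0,2}}$. The first point follows from $\|u(k\delta)\|^2_{H^{\sigma,2}}\lesssim A_\sigma(k\delta)\le 2A_0$. For the second, we apply Theorem \ref{lwp-mKdV-thm} with $\sigma$ in place of $\sigma_0$; this is allowed because the constants $c_0$ and $d$ there do not depend on the analyticity parameter. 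With these checks the induction closes, and the bound $\sigma(T)\ge\min\{\sigma_0,cT^{-1/2}\}$ follows.
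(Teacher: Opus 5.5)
Your proposal is correct and follows essentially the same route as the paper, which simply fixes $s=2$ and runs the iteration scheme of \cite{FP23} with the $\sigma^2$ gain from \eqref{ACL-mKdV-0}. Your reduction from general $s\ge 1/4$ (via $H^{\sigma_0,s}\subset H^{\sigma_0/2,2}$ and $H^{\sigma,2}\subset H^{\sigma/2,s}$) and your check that the local lifespan is uniform in $\sigma$ make explicit what the paper leaves implicit. One small caution: $A_\sigma(0)\le A_{\sigma_0}(0)$ is not automatic. The quadratic, $L^4$ and $L^6$ parts are monotone in $\sigma$, because $\cosh(\sigma\xi)/\cosh(\sigma_0\xi)$ is the Fourier transform of a positive unit-mass kernel, but $\|U\partial_xU\|_{L^2_x}^2$ is not obviously so. It is therefore cleaner to set $A_0:=C\|u_0\|_{H^{\sigma_0,2}}^2(1+\|u_0\|_{H^{\sigma_0,2}}^2)^2$ using your Sobolev upper bound, after which the induction closes exactly as you describe.
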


\begin{remark}
Regarding the lower bound $\sigma(T) \ge cT^{-1/2}$ established in Theorem \ref{global-mKdV-thm}, it is worth mentioning that, within the framework of an almost conserved quantity and the $\cosh$ weight used in this work, this decay rate is the best possible. This limitation arises directly from the factor  $\sigma^2$ obtained in the estimates for the error term (see Lemma \ref{est.fU-mKdV}). Any further improvement to this exponent would likely require more sophisticated approaches, such as the application of the Inverse Scattering Transform for the integrable case, which fall outside the scope of the energy method developed in this paper.

Observe that for the solution to the IVP \eqref{mKdV-IVP} in the focusing case ($\mu = 1$), the quantity $A_\sigma(t)$ defined in \eqref{A} involves terms with negative signs. Consequently, one cannot directly ensure that $\|u_0\|_{H^{\sigma,2}}^2 \le A_\sigma(t)$ (see \eqref{defocusing} below), which is a necessary step to obtain a bound of the form \eqref{ACL-mKdV-0}. Since the estimate \eqref{ACL-mKdV-0} plays a crucial role in the global iteration argument in the proof of Theorem~\ref{global-mKdV-thm}, we considered the defocusing mKdV equation in order to establish the lower bound for the evolution of the radius of analyticity. This situation was also observed in the previous works \cite{FP23} and \cite{FP24}.

\end{remark}

The second main result deals with the constant lower bound for the evolution of the radius of analyticity of the solution to the IVP \eqref{mKdVm-IVP} which is obtained exploiting  the term $e^{-2\lambda t}<1$ in \eqref{acq-mKdVm.eq}.

\begin{theorem}\label{global-mKdVm}
Let $m\ge 5$, $\sigma_0>0$, $v_0\in H^{\sigma_0,0}(\rr)$, and the damping $a\in H^{\sigma_0,0}(\rr)$ satisfies the conditions {\bf (A1)}-{\bf (A3)}.  If $v\in C([-T_0,T_0];H^{\sigma_0,0}(\rr))$ is the local solution of the IVP \eqref{mKdVm-IVP} guaranteed by Theorem \ref{lwp-mKdVm}, then there is a constant $\tilde{\sigma}_0=\tilde{\sigma}_0(\|v_0\|_{H^{\sigma_0,0}}, \sigma_0, a)$ such that the solution $v$ can be extended globally in time 
and
\begin{equation}
\label{const.radius}
v(t) \in H^{\sigma(t),0}(\rr),
\quad\text{with}\quad
\sigma(t) \ge \tilde{\sigma}_0 >0, \;\;\text{ for all } t\ge 0.
\end{equation}
Additionally, the following bound holds
\begin{equation}
\label{exp.decay}
\|v(t)\|_{H^{\tilde{\sigma}_0,0}}
\le
Ce^{-\frac{\lambda t}{2}}, \;\;\text{ for all } t\ge 0,
\end{equation}
with $C$ being a positive constant depending on $\|v_0\|_{H^{\sigma_0,0}}$. 
\end{theorem}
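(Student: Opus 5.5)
The plan is to iterate the local result of Theorem~\ref{lwp-mKdVm} together with the almost conserved quantity \eqref{acq-mKdVm.eq} of Theorem~\ref{acq-mKdVm}, keeping the radius fixed at a small constant $\tilde\sigma_0\le\sigma_0$ and the local lifespan fixed at a uniform $T_0$. First, by \eqref{cosh-exp-est} we have $M_\sigma(0)\le \|v_0\|^2_{H^{\sigma_0,0}}=:M_0$ for every $0<\sigma\le\sigma_0$. We fix the uniform step
$$T_0=\frac{c_0}{(1+\|a\|_{\mathcal A^{\sigma_0}}+M_0)^d},$$
which by Theorem~\ref{lwp-mKdVm} is an admissible lifespan for any datum $w$ with $\|w\|^2_{H^{\sigma,0}}\le M_0$ and $\sigma\le\sigma_0$. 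Here we use that $\|a\|_{\mathcal A^{\sigma}}\le\|a\|_{\mathcal A^{\sigma_0}}$ by monotonicity of the norm \eqref{A-norm} in $\sigma$.

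Next, choose $\tilde\sigma_0$ so small that
$$C_1\big(\tilde\sigma_0^{\theta}M_0+\tilde\sigma_0\|a\|_{\mathcal A^{\sigma_0}}\big)\le \tfrac12\big(e^{-\lambda T_0}-e^{-2\lambda T_0}\big)=:\tfrac12\eta,$$
with $\theta=\min\{1,-s_m\}>0$, so that $\eta>0$ depends only on $M_0$, $a$ and $\lambda$. On the first interval $[0,T_0]$, \eqref{acq-mKdVm.eq} gives
$$M_{\tilde\sigma_0}(T_0)\le \big(e^{-2\lambda T_0}+\tfrac12\eta\big)M_{\tilde\sigma_0}(0)\le e^{-\lambda T_0}M_{\tilde\sigma_0}(0),$$
and also $M_{\tilde\sigma_0}(t)\le M_{\tilde\sigma_0}(0)$ for all $t\in[0,T_0]$, since $e^{-2\lambda t}+\eta/2\le1$.

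We then argue by induction. Suppose $M_{\tilde\sigma_0}(nT_0)\le e^{-n\lambda T_0}M_{\tilde\sigma_0}(0)\le M_0$. Then $v(nT_0)$ is an admissible datum of size at most $M_0$, so Theorem~\ref{lwp-mKdVm} applied at radius $\tilde\sigma_0$ with the time-translated equation (the equation is autonomous) gives existence on $[nT_0,(n+1)T_0]$ with the same $T_0$. Theorem~\ref{acq-mKdVm}, applied on this interval, yields the same contraction factor because the constants $C_1$ and $\eta$ are uniform. This shows that the solution exists for all $t\ge0$ with $\sigma(t)\ge\tilde\sigma_0$, which is \eqref{const.radius}. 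Writing $t=nT_0+r$ with $r\in[0,T_0)$, we get
$$M_{\tilde\sigma_0}(t)\le e^{-n\lambda T_0}M_0\le e^{\lambda T_0}e^{-\lambda t}M_0.$$
Taking square roots gives $\|v(t)\|_{H^{\tilde\sigma_0,0}}\le Ce^{-\lambda t/2}$, which is \eqref{exp.decay}.

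The main obstacle is ensuring that the constant $C_1$ in \eqref{acq-mKdVm.eq} is genuinely uniform across iterations. It must not depend on the time shift or grow with $n$; it should depend only on the bound $M_0$ via \eqref{sol.bound} and on $a$ through $\|a\|_{\mathcal A^{\sigma_0}}$ and $\|a\|_{H^{\sigma_0,0}}$. This uniformity is exactly why the error term must carry a factor $\sigma^\theta$ or $\sigma$ multiplying $M_\sigma(0)$, with no $\sigma$-free ``negligible'' term: only then can one absorb it into the gap $e^{-\lambda T_0}-e^{-2\lambda T_0}$ and close the induction.
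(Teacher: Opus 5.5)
Your proof is correct. Its skeleton matches the paper's: a uniform step $T_0$ fixed by the size of the data, a radius chosen small enough that the error term in \eqref{acq-mKdVm.eq} is absorbed, and induction over the intervals $[nT_0,(n+1)T_0]$. Where you genuinely differ is in how you obtain the decay \eqref{exp.decay}.

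The paper absorbs the error only into the gap $1-e^{-2\lambda T_0}$. That gives the uniform bound $M_{\sigma_1}(kT_0)\le M_{\sigma_0}(0)$ but no decay. The decay then comes separately, by interpolating this bound with the exact $L^2$ decay \eqref{L2-norm.v-exp} from the energy identity \eqref{dt.L2-norm.v}, via $\|v\|_{H^{\sigma_1/2,0}}\le(\|v\|_{L^2}\|v\|_{H^{\sigma_1,0}})^{1/2}$. This halves the radius and caps the rate at $e^{-\lambda t/2}$, but the resulting constant $C$ depends only on norms of $v_0$.

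You instead absorb the error into the smaller gap $e^{-\lambda T_0}-e^{-2\lambda T_0}$. Each step is then a strict contraction by $e^{-\lambda T_0}$, and the decay appears directly in $H^{\tilde\sigma_0,0}$. You need neither the $L^2$ identity nor interpolation, and the radius is not halved. Your route is also flexible: replacing $e^{-\lambda T_0}$ by $e^{-\kappa\lambda T_0}$ with $\kappa<2$ gives the rate $e^{-\kappa\lambda t/2}$, arbitrarily close to the $L^2$ rate. The price is a smaller radius and a constant $C$ that also carries the factor $e^{\lambda T_0/2}\le e^{\lambda c_0/2}$.

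There is one small slip. The inequality $e^{-2\lambda t}+\eta/2\le 1$ fails for $t$ near $0$. So \eqref{acq-mKdVm.eq} only gives $M_{\tilde\sigma_0}(nT_0+r)\le(1+\eta/2)M_{\tilde\sigma_0}(nT_0)\le 2M_{\tilde\sigma_0}(nT_0)$ between grid points, not $M_{\tilde\sigma_0}(nT_0+r)\le M_{\tilde\sigma_0}(nT_0)$. This is harmless: the induction uses only grid times, and the factor $2$ just enters the constant in \eqref{exp.decay}.
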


\noindent
{\bf Notations:} Throughout the text we will use standard notations that are commonly used in partial differential equations.  Besides $\widehat{f}$, the notation $\mathcal{F}_xf$ is also used to denote the partial Fourier transform with respect to the spatial variable $x$.  We use $c$ or $C$ to denote various  constants whose exact values are immaterial and may
 vary from one line to the next. We use $A\lesssim B$ to denote an estimate
of the form $A\leq cB$ and $A\sim B$ if $A\leq cB$ and $B\leq cA$.

The structure of this paper unfolds as follows. Section \ref{Sec-2} provides fundamental preliminary estimates. Section \ref{Sec-3} is devoted to the proof of Theorem \ref{lwp-mKdVm}. In Section \ref{Sec-4}, the almost conserved quantities are established as stated in Theorems \ref{ACL-mKdV-thm} and \ref{acq-mKdVm}. Finally, in Section \ref{Sec-5} the lower bounds for the radius of analyticity are obtained by proving Theorems \ref{global-mKdV-thm} and \ref{global-mKdVm}.

%
%
%
%
%
%
\section{Preliminaries} \label{Sec-2}

In this section, we record the technical tools required for our analysis. Throughout this work, we assume the standard properties of the Bourgain spaces $Y_m^{\sigma,s,b}$, such as the continuous inclusion 
$Y_m^{\sigma,s,b} \subset C([-T,T], H^{\sigma,s}(\rr))$ for $b > 1/2$, and the following time-localization estimate
\begin{equation}
\label{inc.Tao}
\|\chi_I w\|_{Y^{\sigma,s,\beta}_m} \lesssim \|w\|_{Y^{\sigma,s,\beta}_{m,T}}
\end{equation}
which holds for $|\beta| < 1/2$ and any interval $I \subset [-T, T]$, where $\chi_I$ denotes the characteristic function of $I$. 
For a comprehensive treatment of these spaces and their embedding properties, we refer to \cite{Tao-b, BFH1, SS}.

We continue with the statement of the trilinear estimates for the mKdVm equation. 
A similar proof can be found in \cite{BFH} (see Corollary $1$ there).

\begin{lemma}
\label{trilinear.mKdVm}
Let $m\in\{5,7,9,\ldots\}$, $\sigma\ge0$ and $s\ge s_m$, where $s_m$ is given as in \eqref{s.index}. There exist $\frac 12<b<1$ and $-\frac 12< b'<0$ such that the following estimate holds
\begin{equation}
\label{trilinear.mKdVm.eq}
\|\p_x(v_1 v_2 v_3)\|_{Y_m^{\sigma,s,b'}}
\lesssim
\|v_1\|_{Y_m^{\sigma,s,b}} \|v_2\|_{Y_m^{\sigma,s,b}}\|v_3\|_{Y_m^{\sigma,s,b}},
\end{equation}
for any $\sigma>0$ and $v_1,v_2,v_3\in Y_m^{\sigma,s,b}$.
\end{lemma}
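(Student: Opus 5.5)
The plan is to reduce the estimate to the case $\sigma=0$, which is the trilinear estimate of \cite{BFH} (Corollary 1 there) in the spaces $Y_m^{s,b}=X_m^{s,b}$. Consider the multiplier $\cosh(\sigma D_x)$. On the Fourier side, with $\xi=\xi_1+\xi_2+\xi_3$, we use the elementary bound
\begin{equation*}
\cosh(\sigma\xi)\le e^{\sigma|\xi|}\le e^{\sigma|\xi_1|}e^{\sigma|\xi_2|}e^{\sigma|\xi_3|}\le 8\cosh(\sigma\xi_1)\cosh(\sigma\xi_2)\cosh(\sigma\xi_3).
\end{equation*}
This follows from the triangle inequality together with \eqref{cosh-exp-est}.

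First write the left-hand side of \eqref{trilinear.mKdVm.eq} in duality/Fourier form. It equals
\begin{equation*}
\Big\|\cosh(\sigma\xi)\la\xi\ra^{s}\la\tau-\xi^m\ra^{b'}|\xi|\,\big|\widehat{v_1}*\widehat{v_2}*\widehat{v_3}\big|(\xi,\tau)\Big\|_{L^2_{\xi,\tau}},
\end{equation*}
up to the harmless step of putting absolute values inside the convolution, which only increases the quantity. Applying the pointwise bound above inside the convolution integral, we may replace $|\widehat{v_j}|$ by $\cosh(\sigma\xi_j)|\widehat{v_j}|=|\widehat{w_j}|$, where $w_j:=\cosh(\sigma D_x)v_j$. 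The result is at most a constant (namely $8$) times
\begin{equation*}
\Big\|\la\xi\ra^{s}\la\tau-\xi^m\ra^{b'}|\xi|\,\big(|\widehat{w_1}|*|\widehat{w_2}|*|\widehat{w_3}|\big)\Big\|_{L^2}.
\end{equation*}

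Next, let $W_j$ be the functions with $\widehat{W_j}=|\widehat{w_j}|$. This does not change the $Y_m^{s,b}$ norms, and $\|W_j\|_{Y_m^{s,b}}=\|v_j\|_{Y_m^{\sigma,s,b}}$. The \cite{BFH} trilinear estimate for $\sigma=0$, in the form in which it is proved there (a bound on a weighted convolution of nonnegative functions, valid for $s\ge s_m$ with suitable $\frac12<b<1$, $-\frac12<b'<0$), then bounds the last display by $\prod_j\|W_j\|_{Y_m^{s,b}}$. This gives the claim with $b,b'$ independent of $\sigma$. The same reduction is used in \cite{BFH} and \cite{FP23} for the $e^{\sigma|\xi|}$ weight, so the equivalence \eqref{cosh-exp-est} makes the argument routine.

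The main point requiring care is that the $\sigma=0$ estimate must hold for the majorized, absolute-value version, rather than only for $\p_x(v_1v_2v_3)$ itself. I would check that the proof in \cite{BFH} proceeds by dyadic or Cauchy–Schwarz bounds on $|\widehat{v_j}|$ and never uses cancellation. This is standard for Bourgain-type multilinear estimates, so I expect no genuine obstruction. The remaining $s$-dependence is untouched, because the $\cosh$ weight factors out completely.
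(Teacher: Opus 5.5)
Your proposal is correct and follows essentially the route the paper intends when it defers to \cite{BFH} (Corollary 1): bound the $\cosh$ weight submultiplicatively, or equivalently use the norm equivalence \eqref{cosh-exp-est} with the $e^{\sigma|\xi|}$-weighted version, and then apply the $\sigma=0$ trilinear estimate to the functions $W_j$ with $\widehat{W_j}=\cosh(\sigma\xi)|\widehat{v_j}|$. The point you flag as needing care is in fact automatic: since $\widehat{W_j}\ge 0$, the majorized convolution is exactly a constant times $\widehat{W_1W_2W_3}$, so the $\sigma=0$ estimate applied to $W_1,W_2,W_3$ gives the bound directly, without any need to inspect the proof in \cite{BFH} for cancellations.
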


The proof of the next result follows closely the steps presented in the proof of Lemma $2.3$ in \cite{Wa}.

\begin{lemma}
\label{bilinear.au.thm}
Let  $\sigma \ge 0$, $m\in\{3,5,7,\ldots\}$, and $b'\le 0\le b$. Then, the following inequality holds
\begin{equation}
\label{bilinear.au}
\|av\|_{Y_m^{\sigma,0,b'}}
\lesssim
\|a\|_{\mathcal{A}^\sigma}\|v\|_{Y_m^{\sigma,0,b}},
\end{equation} 
for all $v\in Y_m^{\sigma,0,b}$ and $a\in\mathcal{A}^\sigma$.
\end{lemma}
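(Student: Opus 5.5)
The plan is to discard the modulation weights first and then prove a fixed-time estimate. The identity $\cosh(x+y)=\cosh x\cosh y+\sinh x\sinh y$ lets the weight $\cosh(\sigma D_x)$ split across the product $av$.

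\emph{Step 1 (removing the modulation weights).} Since $b'\le 0$, we have $\la\tau-\xi^m\ra^{b'}\le 1$, so $\|av\|_{Y_m^{\sigma,0,b'}}\le\|av\|_{Y_m^{\sigma,0,0}}$. Since $b\ge0$, likewise $\|v\|_{Y_m^{\sigma,0,0}}\le\|v\|_{Y_m^{\sigma,0,b}}$. By Plancherel in $(x,t)$,
$$\|w\|_{Y_m^{\sigma,0,0}}=\|\cosh(\sigma D_x)w\|_{L^2_{x,t}}.$$
It therefore suffices to prove, for a.e.\ fixed $t$,
$$\|\cosh(\sigma D_x)(av)(t)\|_{L^2_x}\lesssim\|a\|_{\mathcal{A}^\sigma}\|\cosh(\sigma D_x)v(t)\|_{L^2_x},$$
and then integrate in $t$. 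This step does not depend on $m$, which explains why the lemma holds for every odd $m$.

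\emph{Step 2 (splitting the weight).} On the Fourier side, $\widehat{av}(\xi)=c\int\widehat a(\zeta)\widehat v(\xi-\zeta)\,d\zeta$. The addition formula gives
$$\cosh(\sigma\xi)=\cosh(\sigma\zeta)\cosh(\sigma(\xi-\zeta))+\sinh(\sigma\zeta)\sinh(\sigma(\xi-\zeta)).$$
Hence, at least formally,
$$\cosh(\sigma D_x)(av)=\big(\cosh(\sigma D_x)a\big)\big(\cosh(\sigma D_x)v\big)+\big(\sinh(\sigma D_x)a\big)\big(\sinh(\sigma D_x)v\big).$$
By H\"older,
$$\|\cosh(\sigma D_x)(av)\|_{L^2_x}\le \|\cosh(\sigma D_x)a\|_{L^\infty}\|\cosh(\sigma D_x)v\|_{L^2}+\|\sinh(\sigma D_x)a\|_{L^\infty}\|\sinh(\sigma D_x)v\|_{L^2}.$$
Because $|\sinh(\sigma\xi)|\le\cosh(\sigma\xi)$, we get $\|\sinh(\sigma D_x)v\|_{L^2}\le\|\cosh(\sigma D_x)v\|_{L^2}$.

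\emph{Step 3 (the $a$-factors).} Here $a$ is only bounded, not in $L^2$, so the operators $\cosh(\sigma D_x)$ and $\sinh(\sigma D_x)$ acting on $a$ are interpreted through their power series:
$$\cosh(\sigma D_x)a=\sum_{n\ge0}\frac{(-1)^n\sigma^{2n}}{(2n)!}\p_x^{2n}a,\qquad \sinh(\sigma D_x)a=-i\sum_{n\ge0}\frac{(-1)^n\sigma^{2n+1}}{(2n+1)!}\p_x^{2n+1}a.$$
Term by term, $\|\cosh(\sigma D_x)a\|_{L^\infty}+\|\sinh(\sigma D_x)a\|_{L^\infty}\le\sum_k\frac{\sigma^k}{k!}\|\p_x^ka\|_{L^\infty}\le\|a\|_{\mathcal{A}^\sigma}$, since $(k+1)^{1/4}\ge1$. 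The series converge absolutely whenever $\|a\|_{\mathcal{A}^\sigma}<\infty$.

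\emph{Main obstacle: justifying the product formula.} The identity in Step 2 has to be made rigorous when $\widehat a$ is only a tempered distribution. The route I would take is the $x$-side Leibniz rule:
$$\cosh(\sigma D_x)(av)=\sum_n\frac{(-1)^n\sigma^{2n}}{(2n)!}\sum_k\binom{2n}{k}\p_x^ka\,\p_x^{2n-k}v.$$
First take $v$ with compactly supported $\widehat v$ (and Schwartz in $t$), so that every series converges absolutely. Then regroup by $k$ and use Fubini; the parity of $k$ produces exactly the $\cosh$/$\sinh$ pairing of Step 2. Finally, extend the estimate to all $v\in Y_m^{\sigma,0,b}$ by density, and to all $a\in\mathcal{A}^\sigma$ by the definition of $\mathcal{A}^\sigma$ as a completion. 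In this argument the weight $(k+1)^{1/4}$ in the $\mathcal{A}^\sigma$-norm is not actually needed; it only gives room to spare.
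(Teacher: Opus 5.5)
Your argument is correct, but it takes a different route from the one the paper relies on. The paper gives no proof of its own and defers to Wang's Lemma 2.3. The remark following the lemma shows how that argument runs:
\begin{itemize}
\item it expands the analytic norm in derivatives, $\|f\|_{H^{\sigma,0}}^2\sim\sum_k (k+1)^{1/2}(\sigma^k/k!)^2\|\partial_x^k f\|_{L^2}^2$, where the factor $(k+1)^{1/2}$ comes from $4^k/(2k)!\sim\sqrt{\pi k}/(k!)^2$ in the Taylor series of $\cosh(2\sigma\xi)$;
\item it applies Leibniz and H\"older to $\partial_x^k(av)$;
\item it closes with Young's inequality $\ell^1*\ell^2\subset\ell^2$, using $(k+1)^{1/4}\le (j+1)^{1/4}(k-j+1)^{1/4}$.
\end{itemize}
That last step is where the weight $(k+1)^{1/4}$ in $\|a\|_{\mathcal{A}^\sigma}$ comes from.

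You instead use the exact addition formula for $\cosh$, which factors the weight across the product, so only H\"older is needed. You also correctly isolate and handle the one delicate point: giving meaning to $\cosh(\sigma D_x)a$ and $\sinh(\sigma D_x)a$ when $a$ is merely bounded. You do this through the $x$-side power series and a Leibniz regrouping for band-limited $v$, where everything converges absolutely because $\|\partial_x^j v\|_{L^2}\le N^j\|v\|_{L^2}$, and then pass to general $v$ by density and closedness.

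What your route buys:
\begin{itemize}
\item a shorter proof, with constant $1$ once the modulation weights are dropped;
\item the stronger bound with $\sum_k\frac{\sigma^k}{k!}\|\partial_x^k a\|_{L^\infty}$ in place of $\|a\|_{\mathcal{A}^\sigma}$, so the $(k+1)^{1/4}$ weight is indeed superfluous for the $\cosh$ weight.
\end{itemize}
It is the same multiplicativity mechanism the paper uses in Lemma \ref{aU-est}. Your $x$-side device would also give a meaning to $e^{\pm\sigma D_x}a$ there without the extra hypothesis $a\in H^{\sigma_0,0}(\rr)$ discussed in Remark \ref{a-remark}.

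What the derivative-expansion route buys: it does not depend on an exact algebraic identity for the weight. That is why Wang can use it with the non-multiplicative weight $e^{\sigma|\xi|}$. It also adapts to $H^{\sigma,s}$ with $s>1/2$ through the algebra property of $H^s$, as the paper's remark notes.
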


\begin{remark}
It should be emphasized that Lemma \ref{bilinear.au.thm} is the reason we restrict the local well-posedness result for \eqref{mKdVm-IVP} to the space $Y_m^{\sigma,s,b}$ with $s=0$. While it is possible to prove a similar inequality to \eqref{bilinear.au} for any $s> 1/2$, using the equivalence
$$
\|f\|_{H^{\sigma,s}}
\equiv
\sum\limits_{k=0}^\infty
\Big((k+1)^\frac 12 \Big(\frac{\sigma^k}{k!}\Big)^2\|\p_x^k f\|_{H^{0,s}}^2\Big)^\frac 12,
$$
and the fact that the Sobolev space $H^s(\rr):=H^{0,s}(\rr)$ is closed under multiplication in this case, the argument for $s\in (0,  1/2]$ remains unresolved. We focus on $s=0$ as it is the most significant index in the context of the almost conserved quantity described in Theorem \ref{acq-mKdVm}, which is a central objective of this paper.
\end{remark}

As mentioned in the introduction, when applying the hyperbolic cosine weight in proving the almost conservation law stated in Theorem \ref{ACL-mKdV-thm}, two extra derivatives emerge. The subsequent lemma presents crucial inequalities for managing these additional derivatives.
Throughout, we denote by $|D^p_x|$ the differential operator with symbol $|\xi|^p$ and by  $a+$ the quantity $a+\varepsilon$ for any $\varepsilon>0$. 

\begin{lemma}
\label{strichartz.combined}
Let $b > 1/2$. Then the following estimates hold.
\begin{enumerate}
    \item For $p \in \{6, 8\}$,
    \begin{equation} \label{str-1}
    \|u\|_{L^p_xL^p_t} \lesssim \|u\|_{Y^{0,b}}. 
    \end{equation}
    \item For any $U_1, U_2, U_3, U_4 \in Y^{0,b}$, we have
    \begin{equation}
    \label{strichartz.lemma}
        \|U_1 U_2 U_3\|_{L^{2}_xL_{t}^2} \lesssim \prod_{j=1}^3 \|U_j\|_{Y^{0,b}} \quad \text{and} \quad \|U_1 U_2 U_3 U_4\|_{L^{2}_xL_{t}^2} \lesssim \prod_{j=1}^4 \|U_j\|_{Y^{0,b}}.
    \end{equation}
    \item For $0 < T \lesssim 1$, 
    \begin{equation}
    \label{U-L2}
    \| U\|_{L_x^2L_T^\infty} \lesssim \|U\|_{Y^{\frac{3}{4}+,b}_T} \quad \text{and} \quad \| |D_x| U\|_{L_x^\infty L_T^2} \lesssim \|U\|_{Y^{0,b}_T}.
    \end{equation}
\end{enumerate}
\end{lemma}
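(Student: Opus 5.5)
Each part reduces to a classical linear estimate for the Airy group $e^{-t\partial_x^3}$, transferred to $Y^{0,b}=X^{0,b}$ with $b>\frac12$ by the standard transference principle. The principle says: if a space-time norm satisfies $\|e^{-t\partial_x^3}e^{it\tau_0}f\|_{Z}\lesssim \|f\|_{H^s}$ uniformly in $\tau_0\in\rr$, then $\|u\|_{Z}\lesssim\|u\|_{Y^{s,b}}$ for $b>\frac12$. To apply it we write
\[
u(t)=\int e^{it\tau_0}\,e^{-t\partial_x^3}f_{\tau_0}\,d\tau_0,\qquad \widehat{f_{\tau_0}}(\xi)=\widehat{u}(\xi,\tau_0+\xi^3).
\]
Then Minkowski's inequality and Cauchy--Schwarz in $\tau_0$ against $\la\tau_0\ra^{-b}$, which is square integrable exactly because $b>\frac12$, give the claim.

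\textbf{Part (1).} We use the Strichartz estimate for the Airy group:
\[
\|D_x^{\alpha/q}\, e^{-t\partial_x^3}f\|_{L^q_tL^r_x}\lesssim\|f\|_{L^2},\qquad \frac2q+\frac1r=\frac12,\quad 0\le\alpha\le\frac12 .
\]
The choice $q=r=8$, $\alpha=0$ gives $L^8_{x,t}$ directly. For $p=6$ we take $q=r=6$, which forces a derivative gain $D_x^{\alpha/6}$ with $\alpha=\frac13$. A smoothing gain of nonnegative order can be discarded only if the low frequencies are handled separately. So we split $f=P_{\le1}f+P_{>1}f$. On high frequencies $|D_x|^{-1/18}$ is bounded. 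On low frequencies we use the $L^8_{x,t}$ bound together with a trivial interpolation, or Bernstein combined with the $L^\infty_tL^2_x$ energy bound. Alternatively, the $L^6_{x,t}$ bound for $X^{0,b}$ is classical, following from Kenig--Ponce--Vega \cite{KPV91-1}, and I would simply cite it and transfer it.

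\textbf{Part (2).} This is Hölder's inequality applied to part (1). For three factors, $\|U_1U_2U_3\|_{L^2}\le\prod_j\|U_j\|_{L^6}$; for four factors, $\|U_1\cdots U_4\|_{L^2}\le\prod_j\|U_j\|_{L^8}$. Part (1) then finishes both.

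\textbf{Part (3).} The first inequality is the maximal function estimate \eqref{kpv-1}, transferred to $Y^{\frac34+,b}$. The time localization is handled by taking any extension $\tilde U$ of $U$ off $(-T,T)$ and then the infimum over extensions. The transference works because the $L^2_xL^\infty_T$ norm is invariant under multiplication by $e^{it\tau_0}$, since that factor has modulus one. The second inequality is the Kato smoothing estimate \eqref{kpv-2}, where $|D_x|$ replaces $\partial_x$ since the two have the same modulus on the Fourier side, transferred in the same way, with Minkowski used in $L^\infty_xL^2_T$.

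\textbf{Main obstacle.} The main point requiring care is the transference for the mixed norms $L^2_xL^\infty_T$ and $L^\infty_xL^2_T$. Minkowski's integral inequality must be applied with the $\tau_0$-integral outermost. That is legitimate because both are genuine norms, so the argument goes through.
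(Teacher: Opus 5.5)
Parts (2) and (3) are fine and coincide with the paper's argument: H\"older's inequality on top of part (1), and transference of \eqref{kpv-1} and \eqref{kpv-2}. The gap is the case $p=6$ of part (1).

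Your transference principle requires a linear estimate on all of $\rr^2$, and $\|e^{-t\p_x^3}f\|_{L^6_{x,t}(\rr^2)}\lesssim\|f\|_{L^2}$ is false. For $f_\lambda(x)=\lambda^{1/2}f(\lambda x)$ one has $\|f_\lambda\|_{L^2}=\|f\|_{L^2}$, but $\|e^{-t\p_x^3}f_\lambda\|_{L^6_{x,t}}=\lambda^{-1/6}\|e^{-t\p_x^3}f\|_{L^6_{x,t}}$, which blows up as $\lambda\to0$. So the $L^6$ bound for $X^{0,b}$, although true, is not the transfer of any linear estimate, and there is nothing to ``cite and transfer''. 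Your low-frequency repairs also cannot work on free solutions:
\begin{itemize}
\item Bernstein plus the $L^\infty_tL^2_x$ energy bound controls $P_{\le1}e^{-t\p_x^3}f$ only in $L^\infty_tL^6_x$, with no integrability in $t$ over $\rr$.
\item No interpolation between true estimates for free solutions can produce an estimate that fails for them.
\end{itemize}

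The missing ingredient is a bound with no free-solution analogue, namely $\|u\|_{L^2_{x,t}}\sim\|u\|_{Y^{0,0}}\le\|u\|_{Y^{0,b}}$. The paper interpolates the transferred $L^8_{x,t}$ bound with it, which gives $\|u\|_{L^6_{x,t}}\lesssim\|u\|_{Y^{0,\frac89 b}}\le\|u\|_{Y^{0,b}}$. Alternatively, keep your high-frequency argument and treat $P_{\le1}u$ by Bernstein together with $\|u\|_{L^6_tL^2_x}\le\|u\|_{L^2_tL^2_x}^{1/3}\|u\|_{L^\infty_tL^2_x}^{2/3}\lesssim\|u\|_{Y^{0,b}}$. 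If your ``trivial interpolation'' was meant in this sense, it must be carried out in $Y^{0,b}$, not on free solutions. Until this is fixed, the three-factor bound in (2), which rests on $p=6$, is not established.

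Finally, your Strichartz family is misquoted: $(8,8)$ does not lie on the line $\frac2q+\frac1r=\frac12$. By scaling, a derivative-free $L^q_tL^r_x$ estimate for the Airy group needs $\frac3q+\frac1r=\frac12$, which $(8,8)$ satisfies. On the line $\frac2q+\frac1r=\frac12$ the gain is exactly $|D_x|^{1/q}$, i.e.\ $|D_x|^{1/6}$ at $q=r=6$, not $|D_x|^{1/18}$. The $L^8_{x,t}$ estimate itself is correct, so $p=8$ survives once the citation is fixed.
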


\begin{proof}
The proof of \eqref{str-1} for  $p=8$ follows from the famous Strichartz estimate for the Airy group obtained in \cite{KPV91} (see also  \cite{Axel-1}). Interpolating this with the trivial estimate $\|w\|_{L^2_{x}L^2_t}\lesssim \|w\|_{Y^{0,0}}$ yields \eqref{str-1} for  $p=6$, see \cite{LG}.
Inequalities in \eqref{strichartz.lemma} are an immediate consequence of the  Strichartz estimate \eqref{str-1} and the generalized H\"older inequality. 

Using classical transfer arguments (see, for example,  Lemma $2.9$ in \cite{Tao-b}), the inequalities in \eqref{U-L2} follow directly from \eqref{kpv-1} and \eqref{kpv-2}, respectively. 
\end{proof}

\begin{remark}
The restriction $0 < T \lesssim 1$ in \eqref{U-L2} is consistent with our iterative argument, which is performed over bounded time intervals of fixed width.
\end{remark}

The next lemma provides the primary motivation for adopting the weight  $\cosh(\sigma \xi)$ instead of $e^{\sigma |\xi|}$ to measure the analyticity of functions in this work. The proof of this result will rely on the following inequality
\begin{equation}
\label{sinh-ineq}
|\sinh(r)|\le |r|^\theta \cosh r, 
\; \text{for all } r\in\rr \text{ and }\theta\in [0,1].
\end{equation}
The validity of \eqref{sinh-ineq} is established in the following cases.

\medno
{\bf Case I: $\theta\in[0,1]$ and $|r|\ge1$ or $\theta=0$ and $r\in\rr$.} In this case, \eqref{sinh-ineq} is trivial.

\medno
{\bf Case II: $\theta=1$ and $r\in\rr$.} Since $\cosh$ is increasing on $[0,|r|]$, we have
$$
|\sinh(r)|
=
|\sinh(|r|)|
=
\Big|\int_0^{|r|}\cosh(s) ds \Big|
\le
\cosh(|r|)\int_0^{|r|} ds
=
|r|\cosh r.
$$

\medno
{\bf Case III: $\theta\in [0,1]$ and $|r|<1$.} First, we can assume $r\neq 0$, since for the case $r=0$ the desired inequality \eqref{sinh-ineq} is trivial. We define $f(\theta)=|r|^\theta \cosh(r)-|\sinh(r)|$ and observe that
$$
f'(\theta)=\ln(|r|) |r|^\theta \cosh(r) < 0, \;\;\text{ for all } \theta\in (0,1],
$$
indicating that $f$ is decreasing on the interval $(0,1]$. From cases {\bf I} and {\bf II} we know that $f(0), f(1)\ge 0$, thus proving $f(\theta)\ge 0$, for all $\theta\in [0,1]$.

\begin{lemma}
\label{cosh-est}
Let $\xi = \xi_1+\xi_2+\xi_3$, $\xi_k\in\rr$ for $k=1,2,3$, and $\sigma>0$.
Then, for all $\theta_1,\theta_2\in [0,1]$ we have
\begin{equation}\label{e2.8}
\Big|1- \cosh(\sigma \xi) \sech(\sigma \xi_1)\sech(\sigma \xi_2)\sech(\sigma \xi_3) \Big|
\lesssim
\sigma^{\theta_1 +\theta_2} \xi_{\text{med}}^{\theta_1} \xi_{\max}^{\theta_2}, 
\end{equation}
where $\xi_{\text{med}}$ and $\xi_{\text{max}}$ denote the median and maximum values of $\{ |\xi_1|,|\xi_2|, |\xi_3|\}$, respectively.
\end{lemma}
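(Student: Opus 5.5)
The plan is to expand $\cosh(\sigma\xi)$ with the hyperbolic addition formula, so that the quantity inside the absolute value in \eqref{e2.8} becomes a sum of products of hyperbolic tangents, and then to bound each tangent using \eqref{sinh-ineq}.

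Write $a=\sigma\xi_1$, $b=\sigma\xi_2$, $c=\sigma\xi_3$. Applying $\cosh(x+y)=\cosh x\cosh y+\sinh x\sinh y$ twice, first to $a+(b+c)$ and then to $b+c$, gives
\begin{equation*}
\cosh(a+b+c)=\cosh a\cosh b\cosh c+\cosh a\sinh b\sinh c+\sinh a\cosh b\sinh c+\sinh a\sinh b\cosh c .
\end{equation*}
Dividing by $\cosh a\cosh b\cosh c$ shows that
\begin{equation*}
\cosh(\sigma \xi) \sech(\sigma \xi_1)\sech(\sigma \xi_2)\sech(\sigma \xi_3)-1=\tanh a\tanh b+\tanh a\tanh c+\tanh b\tanh c .
\end{equation*}
The left-hand side of \eqref{e2.8} is therefore at most the sum of three terms of the form $|\tanh(\sigma\xi_i)\tanh(\sigma\xi_j)|$ with $i\neq j$.

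Next, reduce each such term to the median and maximum frequencies. The function $r\mapsto|\tanh r|$ depends only on $|r|$ and is nondecreasing in $|r|$. Any pair of distinct indices $\{i,j\}$ consists of two of the three values $|\xi_k|$. Hence one factor of the product is at most $|\tanh(\sigma\xi_{\text{max}})|$, and the other factor, being the smaller of two distinct entries, is at most $|\tanh(\sigma\xi_{\text{med}})|$. It follows that every term is bounded by $|\tanh(\sigma\xi_{\text{med}})|\,|\tanh(\sigma\xi_{\text{max}})|$.

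Finally, \eqref{sinh-ineq} gives $|\tanh r|\le |r|^{\theta}$ for every $r\in\rr$ and $\theta\in[0,1]$. Applying this with exponent $\theta_1$ to the median factor and $\theta_2$ to the maximal factor yields
\begin{equation*}
\Big|1- \cosh(\sigma \xi) \sech(\sigma \xi_1)\sech(\sigma \xi_2)\sech(\sigma \xi_3) \Big|\le 3\,\sigma^{\theta_1+\theta_2}\xi_{\text{med}}^{\theta_1}\xi_{\text{max}}^{\theta_2},
\end{equation*}
which is \eqref{e2.8} with implicit constant $3$.

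The only step needing care is the ordering argument, namely that each product pairs one factor dominated by the median frequency with one dominated by the maximal frequency. Monotonicity of $|\tanh|$ settles it, so I do not expect a genuine obstacle. The important structural point is that the constant term $1$ cancels exactly and leaves only products of two tangents. This is precisely where the $\cosh$ weight beats the exponential weight: it supplies two small factors rather than one, and hence up to two powers of $\sigma$.
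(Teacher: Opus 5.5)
Your proof is correct and takes essentially the paper's route: expand $\cosh(\sigma\xi)$ with the addition formula and bound the mixed terms using \eqref{sinh-ineq}. The only differences are cosmetic. You divide by $\cosh(\sigma\xi_1)\cosh(\sigma\xi_2)\cosh(\sigma\xi_3)$ first to get the exact pairwise $\tanh$ identity, and your monotonicity argument (each pair has one factor dominated by the median and one by the maximum) makes explicit a step the paper leaves implicit.
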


\begin{proof}
Recalling the fundamental equalities
\begin{equation*}
\cosh(x+y) 
=
\cosh(x)\cosh(y) +\sinh(x)\sinh(y)
\quad\text{and}\quad
\sinh(x+y) 
=
\sinh(x)\cosh(y) +\cosh(x)\sinh(y),
\end{equation*}
and using the inequality \eqref{sinh-ineq}, we can write
\begin{equation*}
|\cosh(\sigma\xi_1)\cosh(\sigma\xi_2)\cosh(\sigma\xi_3) -\cosh(\sigma\xi)|
\lesssim
\sigma^{\theta_1 +\theta_2}\xi_{\text{med}}^{\theta_1}\xi_{\max}^{\theta_2}
\cosh(\sigma\xi_1)\cosh(\sigma\xi_2)\cosh(\sigma\xi_3). 
\end{equation*}
The proof of \eqref{e2.8} follows  by multiplying both sides of the last inequality by $\sech(\sigma\xi_1)\sech(\sigma\xi_2)\sech(\sigma\xi_3)$.
\end{proof}

The following lemmas address the estimates for the key terms that arise in the proof of the almost conserved quantities within the analytic function spaces $H^{\sigma,2}(\rr)$ for the mKdV equation  \eqref{mKdV-IVP} and $H^{\sigma,0}(\rr)$ for the mKdVm equation  \eqref{mKdVm-IVP}. 
Specifically, these lemmas provide bounds for the norm of the operators $F$ and $G$, defined as follows
\begin{equation}
\begin{split}
\label{fU-def-mKdV}
F(W)
&:=
\frac {\mu}{3} \partial_x\Big[
W^3-\cosh(\sigma D_x)\big((\sech(\sigma D_x) W)^3\big)
\Big]\\
G(W)
&:=
a(x)W-\cosh(\sigma D_x)\big[a(x)(\sech(\sigma D_x) W)\big],
\end{split}
\end{equation}
where $\mu=\pm 1$ and $a$ is the damping function appearing in \eqref{mKdVm-IVP}.

\begin{lemma}
\label{est.fU-mKdV}
Let $\sigma>0$ and $F$ be defined as in \eqref{fU-def-mKdV}.
Then, there is some $\frac12<b<1$ such that
\begin{align}
\label{F-L2}
\|F(U)\|_{L^2_xL^2_t}
&\lesssim
\sigma^{2}\|U\|^3_{Y^{2,b}},\\
\label{UF-L2}
\|U F(U)\|_{L^2_xL^2_t}
&\lesssim
 \sigma^{2}\|U\|^4_{Y^{2,b}}.
\end{align}
\end{lemma}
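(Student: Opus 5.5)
We work entirely on the Fourier side and set $\widehat{U_k}:=|\widehat U|$ (for $k=1,2,3$), so that all estimates reduce to Strichartz-type bounds from Lemma~\ref{strichartz.combined}. Writing $\xi=\xi_1+\xi_2+\xi_3$, the Fourier transform of $F(U)$ is
\[
\mathcal F F(U)(\xi,\tau)=\frac{\mu i}{3}\,\xi\int_{*}\Big[1-\cosh(\sigma\xi)\prod_{k=1}^3\sech(\sigma\xi_k)\Big]\prod_{k=1}^3\widehat U(\xi_k,\tau_k),
\]
with the integral taken over the convolution hyperplane. By Lemma~\ref{cosh-est} with $\theta_1=\theta_2=1$, the bracket is bounded by $\sigma^2\xi_{\rm med}\xi_{\max}$. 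Since also $|\xi|\lesssim\xi_{\max}$, the multiplier is at most $\sigma^2\,\xi_{\rm med}\,\xi_{\max}^2$.

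This is the crux. We must distribute three derivatives among the factors while each $U$ carries only two derivatives in $Y^{2,b}$. Putting two derivatives on the maximal frequency and one on the median frequency, and using symmetry to assume without loss of generality that $|\xi_1|\ge|\xi_2|\ge|\xi_3|$, Plancherel gives
\[
\|F(U)\|_{L^2_{x,t}}\lesssim\sigma^2\,\big\|(D_x^2 U_1)(|D_x|U_2)U_3\big\|_{L^2_{x,t}}.
\]
Applying the trilinear estimate \eqref{strichartz.lemma} to $U_1'=|D_x|^2U_1$, $U_2'=|D_x|U_2$ and $U_3'=U_3$ then yields
\[
\|F(U)\|_{L^2_{x,t}}\lesssim\sigma^2\,\|U\|_{Y^{2,b}}\,\|U\|_{Y^{1,b}}\,\|U\|_{Y^{0,b}}\le\sigma^2\,\|U\|_{Y^{2,b}}^3,
\]
which is \eqref{F-L2}. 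Thus, with the cheap bound $|\xi|\lesssim\xi_{\max}$, the count of derivatives closes without any smoothing. Lemma~\ref{cosh-est} should nonetheless be checked carefully: it must indeed give $\xi_{\rm med}\xi_{\max}$ and not $\xi_{\max}^2$. This holds because the expansion of the cosh addition formula produces terms with at least two sinh factors, each of size $\lesssim\sigma|\xi_k|\cosh$. If one instead only had $\xi_{\max}^2$, three derivatives would fall on a single factor, and that is where the local smoothing bounds \eqref{U-L2} would be needed: $\|D_x^3 U\|$ controlled via $\||D_x|V\|_{L^\infty_xL^2_T}$ with $V=D_x^2U$, combined with the maximal function bound $\|U\|_{L^2_xL^\infty_T}\lesssim\|U\|_{Y^{3/4+,b}}$.

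For \eqref{UF-L2} there is an extra factor $U$ that does not interact with the multiplier. We therefore take absolute values on the Fourier side, as above. Writing $U\cdot F(U)$ as a product in physical space, we need to bound
\[
\sigma^2\,\big\|\,U\cdot(D_x^2U_1)(|D_x|U_2)U_3\big\|_{L^2_{x,t}}.
\]
Taking absolute values is legitimate here: the outer product with $U$ is a convolution in frequency, and the pointwise majorization $|\mathcal F F(U)|\le\sigma^2\,\mathcal F\big[(D_x^2U_1)(|D_x|U_2)U_3\big]$ holds once all the $\widehat{U_k}$ are replaced by $|\widehat U|$, which preserves the $Y^{s,b}$ norms. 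This passage — majorizing the ordering-dependent multiplier by a positive one and splitting the frequency space into regions according to which index is maximal — is, together with the derivative count, the only delicate point. Applying the four-factor estimate in \eqref{strichartz.lemma} to $U$, $|D_x|^2U_1$, $|D_x|U_2$ and $U_3$ gives $\sigma^2\|U\|_{Y^{2,b}}^4$. The choice of $b$ is simply any $b>1/2$ for which Lemma~\ref{strichartz.combined} applies, taken less than $1$.
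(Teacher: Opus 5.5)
Your proposal is correct and follows essentially the same route as the paper. You apply Lemma~\ref{cosh-est} with $\theta_1=\theta_2=1$, use $|\xi|\lesssim\xi_{\max}$ to bound the multiplier by $\sigma^2\xi_{\rm med}\xi_{\max}^2$, place two derivatives on the largest frequency and one on the median, and close with the trilinear (resp.\ quadrilinear) Strichartz bound \eqref{strichartz.lemma} applied to functions with Fourier transform $\langle\xi\rangle^\kappa|\widehat U|$. Your side remark about the smoothing estimates \eqref{U-L2} is not needed here; the paper uses them only for the term $R_4$ in the proof of Theorem~\ref{ACL-mKdV-thm}.
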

\begin{proof}
We begin by writing the following estimate for the Fourier transform of $F$, a result that can be readily obtained using Lemma \ref{cosh-est} with $\theta_1=\theta_2=1$,
\begin{equation}
\label{Fourier-F}
\!\!
|\widehat{F(U)}(\xi,\tau)|
\lesssim
|\xi|\!\int_\ast \big|1-\cosh(\sigma \xi) \prod\limits_{k=1}^3 \sech(\sigma \xi_k) \big|
\prod_{j=1}^3 |\widehat{U}(\xi_j,\tau_j)|
\lesssim
\sigma^2 |\xi| \!
\int_\ast \xi_{\text{med}}\xi_{\max}
\prod_{j=1}^3 |\widehat{U}(\xi_j,\tau_j)|
\end{equation}
where $\int_\ast$ denotes the integral over the set 
$\{(\xi, \tau)\in \rr^2:  \xi=\xi_1+\xi_2+\xi_3\text{ and }\tau=\tau_1+\tau_2+\tau_3\}$.
Without loss of generality, by symmetry, we can assume $|\xi_1|\le |\xi_2| \le |\xi_3|$, and using inequality \eqref{Fourier-F}, we derive
\begin{equation}
\label{F-est}
|\widehat{F(U)}(\xi,\tau)|
\lesssim
\sigma^2 \int_\ast |\xi_2||\xi_3|^2
|\widehat{U}(\xi_1,\tau_1)\widehat{U}(\xi_2,\tau_2)\widehat{U}(\xi_3,\tau_3)|.
\end{equation}

To establish \eqref{F-L2}, we utilize Parseval's identity and apply estimate \eqref{F-est} to obtain
$$
\|F(U)\|_{L^2_xL^2_t}
\lesssim
\sigma^2 \| W_0 W_1 W_2\|_{L^2_x L^2_t},
$$
where 
\begin{equation}
\label{w-def}
\widehat{W_\kappa}=\la\xi\ra^\kappa|\widehat{U}|, \qquad \kappa= 0,1,2.
\end{equation} 
Then, it follows from \eqref{strichartz.lemma} that for all $b> 1/2$ 
\begin{equation}
\label{F-L2-X1}
\|F(U)\|_{L^2_x L^2_t}
\lesssim
 \sigma^2 \|W_0\|_{Y^{0,b}}\|W_1\|_{Y^{0,b}}\|W_2\|_{Y^{0,b}}
\lesssim
\sigma^2 \|U\|_{Y^{0,b}}\|U\|_{Y^{1,b}}\|U\|_{Y^{2,b}},
\end{equation}
thus ensuring the validity of inequality \eqref{F-L2}.

Regarding \eqref{UF-L2}, we proceed similarly. By applying estimate \eqref{F-est} and Parseval's identity, we derive
\begin{equation*}
\|U F(U)\|_{L^2_x L^2_t} \lesssim \sigma^2 \|W_0^2 W_1W_2\|_{L^2_x L^2_t},
\end{equation*}
where $W_\kappa$ is as defined in \eqref{w-def}. The result follows immediately from the quadrilinear Strichartz estimate in \eqref{strichartz.lemma}, yielding $\|U F(U)\|_{L^2_x L^2_t} \lesssim \sigma^2 \|U\|^2_{Y^{0,b}} \|U\|_{Y^{1,b}} \|U\|_{Y^{2,b}}$.
\end{proof}

\begin{lemma}
\label{Xsb-F.mKdVm}
Let $m\in\{5,7,9,\ldots\}$, $\sigma >0$, $\theta\in[0,\min\{1,-s_m\}]$, where $s_m$ is given by \eqref{s.index}, and let $F$ be defined as in \eqref{fU-def-mKdV}.
Then, there exist $-1/2 <b'<0$ and $1/2<b\le 1+b'$ such that
\begin{equation*}
\big\|F(V)\big\|_{Y^{0,b'}_{m}}
\lesssim
\sigma^\theta \|V\|^3_{Y^{0,b}_{m}}
\end{equation*}
for all $V\in Y^{0,b}_{m}$.
\end{lemma}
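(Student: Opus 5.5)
The plan is to reduce the estimate for $F(V)$ to the trilinear estimate of Lemma \ref{trilinear.mKdVm}, used at a negative regularity index, by trading the $\sigma^\theta$ gain for derivatives through Lemma \ref{cosh-est}.

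\textbf{Step 1 (pointwise Fourier bound).} Exactly as in \eqref{Fourier-F}, take Fourier transforms of $F(V)$. Apply Lemma \ref{cosh-est} with $\theta_1=0$ and $\theta_2=\theta$. This gives
\begin{equation*}
|\widehat{F(V)}(\xi,\tau)|
\lesssim
\sigma^\theta |\xi| \int_\ast \xi_{\max}^{\theta}\prod_{j=1}^3 |\widehat{V}(\xi_j,\tau_j)|.
\end{equation*}
By symmetry we may assume $|\xi_3|=\xi_{\max}$. Then
\[
\xi_{\max}^\theta\lesssim \la\xi_3\ra^{\theta}\lesssim \prod_{j=1}^3\la\xi_j\ra^{\theta/3}\cdot \la\xi_3\ra^{2\theta/3}.
\]
The simplest route, however, is to put the whole weight $\la\xi_3\ra^\theta$ on the largest frequency.

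\textbf{Step 2 (reduction to the trilinear estimate at $s=-\theta$).} For the reduction to close, each factor should carry the weight $\la\xi_j\ra^{-\theta}$ on the right-hand side, so that the norms become $Y_m^{-\theta,b}$. We therefore want a bound of the form
\[
\big\| |\xi|\,\widehat{V_1}*\widehat{V_2}*\widehat{(|D_x|^{\theta}V_3)}\big\| \lesssim \ldots
\]
Define $\widehat{W}=|\widehat V|$ and $\widehat{W_\theta}=\la\xi\ra^{\theta}|\widehat V|$. Then
\[
\|F(V)\|_{Y^{0,b'}_m}\lesssim\sigma^\theta\|\p_x(W\,W\,W_\theta)\|_{Y_m^{0,b'}}.
\]
Here the operator $\p_x$ is understood on the absolute-value Fourier side, which is harmless since the trilinear proof in \cite{BFH} only uses moduli. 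The key choice is to use $\la\xi\ra^0\le\la\xi\ra^{-\theta}\cdot\la\xi\ra^{\theta}$ together with $\la\xi\ra\lesssim\la\xi_{\max}\ra$. Since $\theta\le -s_m$, the index $s=-\theta\ge s_m$ is admissible. Moving $\la\xi_{\max}\ra^\theta$ from the output to the input then requires a version of Lemma \ref{trilinear.mKdVm} in which the output sits at $s=0$ rather than at $s$, with one extra factor $\la\xi_{\max}\ra^{\theta}$ absorbed. Concretely, I would aim to verify
\begin{equation*}
\Big\|\frac{|\xi|\,\la\xi_{\max}\ra^{\theta}}{\la\tau-\xi^m\ra^{-b'}}\int_\ast\prod_{j}\frac{f_j(\xi_j,\tau_j)}{\la\xi_j\ra^{-\theta}\cdot\la\xi_j\ra^{\theta}\la\tau_j-\xi_j^m\ra^{b}}\Big\|_{L^2}\lesssim\prod\|f_j\|_{L^2}.
\end{equation*}
This is the trilinear estimate of \cite{BFH} with an extra gain of $\la\xi_{\max}\ra^{\theta}$ relative to the scaling $s=0$, equivalently with a loss of derivatives bounded by $-s_m$.

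\textbf{Step 3 (the main obstacle).} The hardest step is justifying this gain. I would follow \cite{BFH}: split according to which of $\la\tau-\xi^m\ra$, $\la\tau_j-\xi_j^m\ra$ is maximal. In each region, use the resonance relation for $m$-th order dispersion,
\[
\max\la\tau-\xi^m\ra,\la\tau_j-\xi_j^m\ra \gtrsim |(\xi_1+\xi_2)(\xi_1+\xi_3)(\xi_2+\xi_3)|\,\xi_{\max}^{m-3},
\]
whenever this quantity is nonzero. The threshold $s_m$ in \eqref{s.index} is precisely where the available smoothing, roughly $\xi_{\max}^{(m-1)/2\cdot(\ldots)}$, exceeds $|\xi|$. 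The statement that the estimate holds at $s\ge s_m$ is equivalent to saying that the surplus smoothing is at least $\la\xi_{\max}\ra^{-s_m}$. This surplus is exactly what pays for $\la\xi_{\max}\ra^\theta$ when $\theta\le -s_m$.

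The resonant (near-zero modulation) regions, including the case $\xi_1+\xi_2\approx0$, need separate care. There I would instead use bilinear smoothing estimates for $e^{t\p_x^m}$, with the factor $|\xi|\lesssim\xi_{\max}$. The cap $\theta\le1$ comes from Lemma \ref{cosh-est}.

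Finally, choose $b,b'$ as provided by Lemma \ref{trilinear.mKdVm}, with $b\le1+b'$, and conclude.
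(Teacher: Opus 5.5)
\textbf{The gap: Step 2 does not reduce to Lemma \ref{trilinear.mKdVm}, and Step 3 does not supply the missing estimate.}

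With $\theta_1=0$, $\theta_2=\theta$ you have a single weight $\xi_{\max}^\theta$ to work with. The trilinear estimate at $s=-\theta$ controls exactly the multiplier $|\xi|\la\xi\ra^{-\theta}\prod_{j}\la\xi_j\ra^{\theta}$, with inputs in $Y^{-\theta,b}_m$. These input norms are controlled by $\|V\|_{Y^{0,b}_m}$ only when each input carries at most $\la\xi_j\ra^{\theta}$. But $\xi_{\max}^\theta$ is not dominated by $\la\xi\ra^{-\theta}\prod_j\la\xi_j\ra^{\theta}$. In the high--low--low regime $|\xi_1|,|\xi_2|\lesssim1$, $|\xi_3|\sim|\xi|\sim N$, the ratio is $N^\theta$.

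Your suggestion to write $1\le\la\xi\ra^{-\theta}\la\xi\ra^{\theta}$ and use $\la\xi\ra\lesssim\la\xi_{\max}\ra$ only moves the problem. It puts $\la\xi_3\ra^{2\theta}$ on one input, i.e. it requires $\|V\|_{Y^{\theta,b}_m}$, which you do not control.

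So the estimate displayed at the end of your Step 2 is a genuinely new trilinear bound. In that regime it is strictly stronger than Lemma \ref{trilinear.mKdVm}, and Step 3 only sketches how one might prove it. Its guiding heuristic is also wrong. In the high--low--low regime the multiplier $|\xi|\la\xi\ra^{s}\prod_j\la\xi_j\ra^{-s}\sim N$ does not depend on $s$. Hence validity of the trilinear estimate down to $s_m$ gives no surplus $\la\xi_{\max}\ra^{-s_m}$ there; that threshold is dictated by other interactions, such as high--high--high into low output. Whether or not your strengthened estimate is true, establishing it would mean redoing the whole case analysis of \cite{BFH}.

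\textbf{The fix: put the gain on the median frequency.}

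The difficulty comes from where you put the gain. Lemma \ref{cosh-est} also allows $\theta_1=\theta$, $\theta_2=0$. Then, for $|\xi_1|\le|\xi_2|\le|\xi_3|$,
$|\widehat{F(V)}(\xi,\tau)|\lesssim\sigma^\theta|\xi|\int_\ast|\xi_2|^\theta\prod_j|\widehat V(\xi_j,\tau_j)|$.

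Since $\la\xi\ra\lesssim\la\xi_3\ra$, one has $|\xi_2|^\theta\lesssim\la\xi\ra^{-\theta}\la\xi_2\ra^{\theta}\la\xi_3\ra^{\theta}$. Therefore
$\|F(V)\|_{Y^{0,b'}_m}\lesssim\sigma^\theta\|\p_x(w_0w_\theta^2)\|_{Y^{-\theta,b'}_m}$, where $\widehat{w_\kappa}=\la\xi\ra^\kappa|\widehat V|$.

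Lemma \ref{trilinear.mKdVm} at $s=-\theta\ge s_m$ then gives the bound
$\sigma^\theta\|w_0\|_{Y^{-\theta,b}_m}\|w_\theta\|^2_{Y^{-\theta,b}_m}\le\sigma^\theta\|V\|^3_{Y^{0,b}_m}$.

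The output weight is paid by the largest frequency, while the $\sigma^\theta$ gain sits on a different factor. So no input exceeds $\la\xi_j\ra^{\theta}$ and no new multilinear estimate is needed. This is the paper's proof.
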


\begin{proof}
By applying the same arguments used to derive \eqref{F-est}, but now setting $\theta_1=\theta$ and $\theta_2=0$, we obtain
\begin{equation}
\label{F-est2}
|\widehat{F(V)}(\xi,\tau)|
\lesssim
\sigma^\theta |\xi|\int_\ast |\xi_2|^\theta
\prod_{j=1}^3 |\widehat{V}(\xi_j,\tau_j)|,
\end{equation}
where we considered $|\xi_1|\le |\xi_2| \le |\xi_3|$. Then, we have
\begin{equation*}
\|F(V)\|_{Y^{0,b'}_m} \lesssim \sigma^\theta \Big\| \la \tau-\xi^m\ra^{b'} \la\xi\ra^{-\theta} |\xi| \int_\ast \la\xi_2\ra^\theta \la\xi_3\ra^\theta \prod_{j=1}^3 |\widehat{V}(\xi_j,\tau_j)| \Big\|_{L^2_{\xi, \tau}} \sim \sigma^\theta \|\p_x(w_0w_\theta^2)\|_{Y^{-\theta,b'}_m},
\end{equation*}
where $w_\kappa$ is defined by $\widehat{w_\kappa}=\la\xi\ra^\kappa |\widehat{V}|, \; \kappa= 0, \theta$.
The result then follows by applying the trilinear estimate \eqref{trilinear.mKdVm.eq} with $s=-\theta$ valid for $\theta\in [0, \min\{1,-s_m\}]$, which yields $\|F(V)\|_{Y^{0,b'}_m} \lesssim \sigma^\theta \|V\|_{Y^{0,b}_m}^3$. This finishes the proof.
\end{proof}

\begin{lemma}
\label{aU-est}
Let $a\in \mathcal{A}^{\sigma_0}\cap H^{\sigma_0,0}(\rr)$ with $\sigma_0>0$. If $0<\sigma\le \sigma_0$, then
\begin{equation}
\label{H-est}
\|G(V)\|_{L^2_x}
\lesssim
\frac{\sigma}{\sigma_0} \|a\|_{\mathcal{A}^{\sigma_0}}\|V\|_{L^2_x},
\end{equation} 
for all $V\in Y^{0,b}_{m}(\rr^2)$ with $b\ge 0$, where $G$ is given by \eqref{fU-def-mKdV}.
\end{lemma}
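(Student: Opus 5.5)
The plan is to avoid a Fourier-side convolution bound and instead use the addition formula for $\cosh$ to put $G(V)$ into a product form in physical space. There, $L^\infty$ norms of derivatives of $a$, which is what $\|a\|_{\mathcal{A}^{\sigma_0}}$ controls, appear naturally. Set $W=\sech(\sigma D_x)V$. On the Fourier side, the product $aW$ becomes a convolution with $\xi=\zeta+\eta$, where $\zeta$ is the frequency of $a$ and $\eta$ that of $W$. The identity $\cosh(\sigma(\zeta+\eta))=\cosh(\sigma\zeta)\cosh(\sigma\eta)+\sinh(\sigma\zeta)\sinh(\sigma\eta)$ then gives the operator identity
\begin{equation*}
\cosh(\sigma D_x)(aW)=[\cosh(\sigma D_x)a]\,[\cosh(\sigma D_x)W]+[\sinh(\sigma D_x)a]\,[\sinh(\sigma D_x)W].
\end{equation*}
The assumption $a\in H^{\sigma_0,0}(\rr)$ is used exactly here: with $\sigma\le\sigma_0$, it guarantees that $\widehat a$ is a genuine function with enough decay that $\cosh(\sigma D_x)a$ and $\sinh(\sigma D_x)a$ are well defined and the formal convolution manipulation is justified.

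Substituting $W=\sech(\sigma D_x)V$ yields
\begin{equation*}
G(V)=-\big[(\cosh(\sigma D_x)-1)a\big]\,V-\big[\sinh(\sigma D_x)a\big]\,\big[\tanh(\sigma D_x)V\big].
\end{equation*}
Since $|\tanh(\sigma\xi)|\le1$, Plancherel gives $\|\tanh(\sigma D_x)V\|_{L^2_x}\le\|V\|_{L^2_x}$. Hölder's inequality then reduces the estimate to
\begin{equation*}
\|G(V)\|_{L^2_x}\le\Big(\|(\cosh(\sigma D_x)-1)a\|_{L^\infty}+\|\sinh(\sigma D_x)a\|_{L^\infty}\Big)\|V\|_{L^2_x}.
\end{equation*}

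Next we expand the two operators in power series of $\partial_x$. Since $D_x=-i\partial_x$, we have $D_x^{2k}=(-1)^k\partial_x^{2k}$, and hence
\begin{equation*}
(\cosh(\sigma D_x)-1)a=\sum_{k\ge1}\frac{(-1)^k\sigma^{2k}}{(2k)!}\partial_x^{2k}a,
\end{equation*}
while $\sinh(\sigma D_x)a$ is the corresponding odd series. Together the two $L^\infty$ norms are bounded by $\sum_{k\ge1}\frac{\sigma^k}{k!}\|\partial_x^ka\|_{L^\infty}$. Because $k\ge1$ and $\sigma\le\sigma_0$, we have $\sigma^k\le\frac{\sigma}{\sigma_0}\sigma_0^k$. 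Using also $(k+1)^{1/4}\ge1$, this gives
\begin{equation*}
\sum_{k\ge1}\frac{\sigma^k}{k!}\|\partial_x^ka\|_{L^\infty}\le\frac{\sigma}{\sigma_0}\|a\|_{\mathcal{A}^{\sigma_0}},
\end{equation*}
which is \eqref{H-est}. The hypothesis $b\ge0$ plays no role beyond ensuring that $V(\cdot,t)\in L^2_x$.

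The main obstacle is rigor in the series step. We must check that the Taylor series of $\cosh(\sigma D_x)a$ converges in $L^\infty$ and actually equals the Fourier-multiplier definition. Convergence follows from {\bf (A2)}--{\bf (A3)}: $\|\partial_x^k a\|_{L^\infty}\le CR^kk!$ with $R\sigma_0<1$, so the series is dominated by a convergent geometric series. To identify its sum with the multiplier, I would first verify the identity for $a$ replaced by a frequency-truncated version $P_{\le N}a$, for which $\cosh(\sigma\zeta)$ and $\sinh(\sigma\zeta)$ equal their Taylor series uniformly on compact sets. I would then pass to the limit $N\to\infty$ using $a\in H^{\sigma_0,0}$ and dominated convergence.
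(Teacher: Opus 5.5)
Your proof is correct and is essentially the paper's argument. Your $\cosh$/$\sinh$ addition formula is just the average of the paper's factorizations $e^{\pm\sigma D_x}(av)=(e^{\pm\sigma D_x}a)(e^{\pm\sigma D_x}v)$, with $v=\sech(\sigma D_x)V$ (your $W$): expanding the paper's $-\frac12\big[(e^{\sigma D_x}-1)a\cdot e^{\sigma D_x}v+(e^{-\sigma D_x}-1)a\cdot e^{-\sigma D_x}v\big]$ gives exactly your $-[(\cosh(\sigma D_x)-1)a]V-[\sinh(\sigma D_x)a][\tanh(\sigma D_x)V]$, and both proofs then bound the $L^\infty$ factor by $\sum_{k\ge1}\frac{\sigma^k}{k!}\|\p_x^k a\|_{L^\infty}\le\frac{\sigma}{\sigma_0}\|a\|_{\mathcal{A}^{\sigma_0}}$, the paper via the holomorphic extension $\tilde a(x\mp i\sigma)$ and you via a frequency-truncation limit. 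One small point: absolute convergence of that series already follows from $\|a\|_{\mathcal{A}^{\sigma_0}}<\infty$ and $\sigma\le\sigma_0$, so you need not invoke (A2)--(A3), which are not hypotheses of the lemma.
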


\begin{proof}
We define the operator $e^{\pm \sigma D_x}$ given by $\mathcal{F}(e^{\pm \sigma D_x}\varphi)=e^{\pm \sigma\xi}\widehat{\varphi}$. Using Fourier transform, one can easily see that
$
e^{\pm \sigma D_x}(av)=(e^{\pm \sigma D_x} a)\cdot(e^{\pm \sigma D_x}v),
$
implying
$$
G(V)
=
-\frac 12\Big[
(e^{\sigma D_x}-1)a\cdot e^{\sigma D_x}v 
+
(e^{-\sigma D_x}-1)a\cdot e^{-\sigma D_x}v 
\Big],
$$
where $v=\sech(\sigma D_x) V$.

In this way, it follows from the Cauchy-Schwarz inequality that
\begin{equation}
\label{GV-L2.2}
\| G(V)\|_{L^2_x}
\le
\|(e^{\sigma D_x} -1)a\|_{L^\infty}
\| V\|_{L^2_x}
+
\|(e^{-\sigma D_x} -1)a\|_{L^\infty}
\| V\|_{L^2_x},
\end{equation}
where the Parseval identity and inequality \eqref{cosh-exp-est} were used to ensure that $\frac{1}{2}\|e^{\pm \sigma D_x}v\|_{L^2_x}\le \|V\|_{L^2_x}$.

Since $a\in H^{\sigma_0,0}(\rr)$, $a$ admits a holomorphic extension $\tilde{a}$ defined on $S_{\sigma_0} =\{ x+iy: |y|<\sigma_0\}$, as mentioned in the introduction. Therefore, we can write
\begin{equation}
\label{correct-exp.a}
[(e^{\sigma D_x} -1)a](x)
=
\tilde{a}(x-i\sigma)-\tilde{a}(x), 
\;\;\text{ for all }x\in \rr \text{ and } 0<\sigma< \sigma_0,
\end{equation}
as can be easily verified by applying the Fourier transform to both sides of the equality.
Expanding $\tilde{a}$ in Taylor series, we have
\begin{equation}
\label{exp.a1}
\begin{split}
\|(e^{\sigma D_x} -1)a\|_{L^\infty}
=
\| \tilde{a}(x-i\sigma)-\tilde{a}(x)\|_{L^\infty}
\le \frac{\sigma}{\sigma_0} \sum\limits_{k=1}^\infty \frac{\sigma_0^k}{k!} \|\p_x^k a\|_{L^\infty}
=
\frac{\sigma}{\sigma_0} \| a\|_{\mathcal{A}^{\sigma_0}},
\end{split}
\end{equation}
since $0<\sigma\le \sigma_0 $.
In a similar way, we can write
\begin{equation}
\label{exp.a2}
\|(e^{-\sigma D_x} -1)a\|_{L^\infty}
\le
\frac{\sigma}{\sigma_0} \| a\|_{\mathcal{A}^{\sigma_0}}.
\end{equation}

Therefore, the proof is finished by combining \eqref{GV-L2.2} with \eqref{exp.a1} and \eqref{exp.a2}.
\end{proof}

\begin{remark}\label{a-remark}

We note that the proof of Lemma \ref{aU-est} follows closely the steps of the proof of Lemma 3.3 presented in \cite{Wa}. The main difference in our approach lies in the additional assumption that the damping function $a$ belongs to the analytic function space $H^{\sigma_0,0}(\rr)$. This extra assumption, which slightly weakens our result, is necessary due to the equality \eqref{correct-exp.a} used in our proof.
However, we must respectfully point out a discrepancy in \cite{Wa}, where the author uses the following expression 
\begin{equation*}  
\big((e^{\sigma D}-1)a\big)(x)
=
a(x+\sigma)-a(x) 
\end{equation*} 
in which  the imaginary unit $i$ is missing in the argument of the first function of the RHS (see equation $(3.27)$ there). The correct form should be $a(x+i\sigma)$, but this does not make sense because $a$ is defined on $\R$.  To overcome this problem we considered  $a \in H^{\sigma_0}(\mathbb{R})$.  In our view, this additional assumption is crucial for the application of the pseudo-differential operator $e^{\sigma D_x}$ to $a$. Specifically, the well-definedness of $e^{\sigma D_x}a$ requires that the Fourier transform of $a$ exhibits exponential decay, which is guaranteed under this assumption. It would be nice, if one can overcome this problem with less restrictive condition on the damping term $a$.

It is worth noticing that this constraint does not arise when $e^{\sigma D_x}$ is applied to the product $av$, as the exponential decay of $\mathcal{F}_x(av)$ is established in Lemma \ref{bilinear.au.thm}. Thus, the local well-posedness result remains valid under the weaker regularity assumptions on the damping function $a$ as stated in \cite{Wa}.
\end{remark}

To conclude this section, we present an additional technical lemma, whose proof closely follows the approach used in the proof of Lemma $3.1$ in \cite{Wa}.

\begin{lemma}
\label{Wa-3.1}
For all $m\in\{5,7,9, \ldots\}$, $1/2 <b<1$ and $T>0$, we have
$$
\|\chi_{[0,T]}e^{-2\lambda(T-t)}V\|_{Y_m^{0,1-b}}
\lesssim
\|V\|_{Y_{m,T}^{0,b}}.
$$
\end{lemma}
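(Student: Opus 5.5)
We need to prove $\|\chi_{[0,T]}e^{-2\lambda(T-t)}V\|_{Y_m^{0,1-b}}\lesssim\|V\|_{Y_{m,T}^{0,b}}$. Since $1/2<b<1$, we have $1-b\in(0,1/2)$. The plan is to treat multiplication by $\phi(t):=\chi_{[0,T]}(t)e^{-2\lambda(T-t)}$ as a time-multiplier acting on the Bourgain space. We first fix an arbitrary extension $\tilde V$ of $V$ from $\rr\times(-T,T)$ with $\|\tilde V\|_{Y^{0,b}_m}\le 2\|V\|_{Y^{0,b}_{m,T}}$. Since $\phi$ is supported in $[0,T]$, we have $\phi V=\phi\tilde V$, so it suffices to bound $\|\phi\tilde V\|_{Y^{0,1-b}_m}\lesssim\|\tilde V\|_{Y^{0,b}_m}$. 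Because $1-b<b$, it then suffices to prove
\[
\|\phi W\|_{Y^{0,\beta}_m}\lesssim \|W\|_{Y^{0,\beta}_m},\qquad \beta=1-b\in(0,1/2),
\]
and to use $\|W\|_{Y^{0,1-b}_m}\le\|W\|_{Y^{0,b}_m}$.

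To prove this bound, we conjugate by the free group. Setting $W=e^{-t(-1)^{j+1}\p_x^m}w$, so that $\widehat W(\xi,\tau)=\widehat w(\xi,\tau-\xi^m)$, the $Y^{0,\beta}_m$ norm becomes $\|w\|_{L^2_xH^\beta_t}$. Multiplication by $\phi(t)$ commutes with this conjugation, so the claim reduces to the one-dimensional statement that $\phi$ is a bounded multiplier on $H^\beta_t(\rr)$ for $0\le\beta<1/2$, uniformly in $x$, after which Fubini/Plancherel in $x$ finishes. We split $\phi=\chi_{[0,T]}\cdot e^{-2\lambda(T-t)}$. Multiplication by $\chi_{[0,T]}$ is bounded on $H^\beta_t$ for $|\beta|<1/2$ with a constant independent of $T$; this is the standard fact behind \eqref{inc.Tao}, and we may simply invoke \eqref{inc.Tao}, applied to $e^{-2\lambda(T-t)}\tilde W$.

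The main obstacle is the smooth factor $e^{-2\lambda(T-t)}$, which is unbounded on $\rr$ and cannot be used as a multiplier directly. To handle it, we replace it by $\psi(t)=\eta(t)e^{-2\lambda(T-t)}$, where $\eta$ is a smooth cutoff equal to $1$ on $[0,T]$ and supported in $[-1,T+1]$. Then $\phi=\chi_{[0,T]}\psi$, and $\psi$ is smooth, compactly supported, and satisfies $0\le\psi\le1$ on $[0,T]$, while near the edges $\psi\le e^{2\lambda}$. For $0\le\beta\le1$ the multiplier norm of $\psi$ on $H^\beta_t$ is bounded by $\|\psi\|_{L^\infty}+\|\psi'\|_{L^\infty}$; this follows by interpolating between the $L^2$ and $H^1$ bounds. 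Here $\psi'=\eta'e^{-2\lambda(T-t)}+2\lambda\psi$, which is bounded by a constant depending only on $\lambda$ and $\eta$, independent of $T$. Combining these gives $\|\phi\tilde V\|_{Y^{0,1-b}_m}\lesssim\|\psi\tilde V\|_{Y^{0,1-b}_m}\lesssim\|\tilde V\|_{Y^{0,1-b}_m}\lesssim\|\tilde V\|_{Y^{0,b}_m}$. Taking the infimum over extensions then concludes the proof.
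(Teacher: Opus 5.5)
Your proof is correct and complete. The paper does not actually prove this lemma; it only refers to Lemma 3.1 of \cite{Wa}, so your argument supplies what the paper omits.

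Your route has three steps:
\begin{itemize}
\item pass to a near-optimal extension $\tilde V$;
\item conjugate by the free group, reducing to multipliers on $H^\beta_t$ with $\beta=1-b\in(0,1/2)$;
\item factor $\chi_{[0,T]}(t)e^{-2\lambda(T-t)}=\chi_{[0,T]}(t)\psi(t)$, where $\psi=\eta\, e^{-2\lambda(T-\cdot)}$ is smooth, compactly supported, and has $\|\psi\|_{L^\infty}+\|\psi'\|_{L^\infty}$ bounded independently of $T$.
\end{itemize}
This gives a constant depending only on $b$ and $\lambda$. That is the uniformity needed, since in \eqref{RI.est} the lemma is used with $T$ replaced by an arbitrary $t\in[0,T_0]$.

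One wording slip. In your second paragraph you propose applying \eqref{inc.Tao} to $e^{-2\lambda(T-t)}\tilde W$. That function need not lie in $Y^{0,\beta}_m$, because $e^{-2\lambda(T-t)}$ grows as $t\to+\infty$. The estimate must be applied to $\psi\tilde W$, which has the same product with $\chi_{[0,T]}$. Your final chain of inequalities does exactly this, so only the earlier sentence needs correcting.

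You can also avoid the smooth cutoff entirely. For every $t\in\rr$,
\[
\chi_{[0,T]}(t)e^{-2\lambda(T-t)}=e^{-2\lambda T}\chi_{[0,T]}(t)+\int_0^T 2\lambda e^{-2\lambda(T-s)}\chi_{[s,T]}(t)\,ds,
\]
and the weights $e^{-2\lambda T}$ and $2\lambda e^{-2\lambda(T-s)}\,ds$ have total mass $1$. The multiplier is therefore an average of indicators of subintervals of $[0,T]$. Minkowski's inequality (or duality plus Fubini), together with the interval-uniform bound \eqref{inc.Tao} for $\chi_I$, then gives
\[
\|\chi_{[0,T]}e^{-2\lambda(T-t)}V\|_{Y^{0,1-b}_m}\lesssim\|V\|_{Y^{0,1-b}_{m,T}}\le\|V\|_{Y^{0,b}_{m,T}}
\]
directly. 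The constant is then independent of both $T$ and $\lambda$. Your version instead isolates the non-smooth part in $\chi_{[0,T]}$ and pays a harmless factor of order $e^{2\lambda}(1+\lambda)$ for the smooth part.
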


%
%
%
%
%
%
\section{Local well-posedness of  the IVP \eqref{mKdVm-IVP} - Proof of Theorem \ref{lwp-mKdVm} }
\label{Sec-3}
This section provides a detailed proof of Theorem \ref{lwp-mKdVm}. 
First, using Duhamel formula, we observe that a formal solution to the IVP \eqref{mKdVm-IVP} is given by
$$
v(x,t)
=
W(t)v_0(x)-\int_0^tW(t-t')(\mu v^2\p_xv +av)dt',
$$
where $W(t)$ is the semigroup defined by its Fourier multiplier as follows
$$\mathcal{F}_x(W(t)\varphi)=e^{i\xi^m t}\widehat{\varphi}(\xi).$$
By using a cutoff function $\psi\in C_0^\infty(-2,2)$ with $0\le \psi\le 1$ and $\psi(t)=1$ for all $t\in[-1,1]$, we define the following map
\begin{equation}
\label{Phi}
\Phi_Tv(x,t)
=
\psi(t)W(t)v_0(x)-\psi_T(t)\int_0^tW(t-t')(\mu v^2\p_xv +av)dt',
\end{equation}
for any $0< T\le 1$ with $\psi_T(t)=\psi(t/T)$.

With this setting, proving the existence of a solution to the IVP \eqref{mKdVm-IVP} on $[-T,T]\times \rr$ reduces to finding a fixed point of  $\Phi_T$ in a suitable complete metric space. In other words, the proof of Theorem~\ref{lwp-mKdVm}  consists in demonstrating the existence of a $T>0$ such that $\Phi_T$ is a contraction map in a closed ball $B(r)=\{v\in Y_{m,T}^{\sigma_0,0,b}; \|v\|_{Y^{\sigma_0,0,b}_{m,T}}\le r\}$, for some $r>0$. The next proposition goes in this direction and its proof largely replicates the arguments in \cite{BFH} (see Lemma $2$ there).

\begin{proposition}
\label{map.est}
Let $m\in \{5,7,9\ldots\}$, $\sigma_0>0$, $-1/2<b'\le 0\le b\le 1-b'$ and $0<T\le 1$. For any $v_0\in H^{\sigma_0,0}(\rr)$, we have
\begin{equation}
\| \Phi_T(v)\|_{Y_m^{\sigma_0,0,b}}
\lesssim
 \|v_0\|_{H^{\sigma_0,0}}+T^{1-(b-b')}\|\mu v^2\p_xv +av\|_{Y_{m}^{\sigma_0,0,b'}},
\end{equation}
where $\Phi_T$ is given by \eqref{Phi}.
\end{proposition}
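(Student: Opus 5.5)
The plan is to reduce the estimate to the classical linear estimates in Bourgain spaces. We conjugate by the multiplier $\cosh(\sigma_0 D_x)$. It commutes with $W(t)$ and with multiplication by functions of $t$ alone, since both $\psi(t)$ and $\psi_T(t)$ act only in time. Set $\tilde v_0=\cosh(\sigma_0 D_x)v_0$ and $N=\cosh(\sigma_0 D_x)(\mu v^2\p_x v+av)$. Then
$$
\cosh(\sigma_0 D_x)\Phi_T v=\psi(t)W(t)\tilde v_0-\psi_T(t)\int_0^t W(t-t')N(t')\,dt'.
$$
By the definition of the norms, $\|w\|_{Y_m^{\sigma_0,0,b}}=\|\cosh(\sigma_0 D_x)w\|_{Y_m^{0,b}}$ and $\|v_0\|_{H^{\sigma_0,0}}=\|\tilde v_0\|_{L^2}$. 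It therefore suffices to prove two estimates in the unweighted spaces $Y_m^{0,b}$, which are the standard $X^{s,b}$ spaces adapted to the phase $\xi^m$.

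First, we treat the linear term and show $\|\psi(t)W(t)\tilde v_0\|_{Y_m^{0,b}}\lesssim\|\tilde v_0\|_{L^2}$. The space-time Fourier transform of $\psi(t)W(t)\tilde v_0$ is $\widehat{\tilde v_0}(\xi)\,\widehat\psi(\tau-\xi^m)$. Its norm is therefore $\|\langle\tau\rangle^b\widehat\psi\|_{L^2_\tau}\|\tilde v_0\|_{L^2}$, which is finite because $\psi\in C_0^\infty$.

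Second, we treat the Duhamel term and aim for
$$
\Big\|\psi_T(t)\int_0^tW(t-t')N\,dt'\Big\|_{Y_m^{0,b}}\lesssim T^{1-(b-b')}\|N\|_{Y_m^{0,b'}},
$$
for $-1/2<b'\le0\le b\le 1+b'$ and $0<T\le1$. The steps are as follows.

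\begin{itemize}
\item Since $W(-t)$ acts on the Fourier side by the multiplier $e^{-it\xi^m}$, applying it maps $Y_m^{0,b}$ isometrically onto $H^b_tL^2_x$ (norm $\|\langle\tau\rangle^b\widehat{\cdot}\,\|_{L^2_{\xi,\tau}}$). This reduces the claim to a one-dimensional estimate in time, applied for each fixed $\xi$.
\item The one-dimensional estimate is the standard bound $\|\psi_T(t)\int_0^t f(t')\,dt'\|_{H^b_t}\lesssim T^{1-b+b'}\|f\|_{H^{b'}_t}$, valid for $-1/2<b'\le0\le b\le b'+1$ and $T\le1$ (see, e.g., Lemma 2.11 in \cite{Tao-b}, or the Ginibre--Tsutsumi--Velo argument).
\item This bound is applied with $f=\mathcal F_x(W(-\cdot)N)(\xi,\cdot)$. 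Integrating in $\xi$ then gives the Duhamel estimate above.
\end{itemize}

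The main obstacle is this one-dimensional time estimate with the sharp power $T^{1-(b-b')}$, under the hypothesis $b\le 1+b'$ (the statement writes $1-b'$, but with $b'\le0$ the relevant constraint is $b-b'\le1$). We would prove it in the standard way.

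\begin{itemize}
\item Write $\int_0^t f=\int \widehat f(\tau)\frac{e^{it\tau}-1}{i\tau}\,d\tau$ and split into the regions $|\tau|\le 1/T$ and $|\tau|>1/T$.
\item On $|\tau|\le1/T$, expand $\frac{e^{it\tau}-1}{i\tau}=\sum_{k\ge1}\frac{(it)^k\tau^{k-1}}{k!}$. Each term is bounded using $\|t^k\psi_T\|_{H^b}\lesssim T^{k+1/2-b}$, and the resulting sum is controlled by Cauchy--Schwarz using $b'>-1/2$.
\item On $|\tau|>1/T$, split $\frac{e^{it\tau}-1}{i\tau}$ into its two pieces. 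The piece $\psi_T\int\widehat f(\tau)(i\tau)^{-1}d\tau$ is a constant times $\psi_T$. The piece $\psi_T\int\widehat f(\tau)\frac{e^{it\tau}}{i\tau}d\tau$ is $\psi_T$ times a function $g$ with $\widehat g=\widehat f/(i\tau)$ on $|\tau|>1/T$.
\item For the $g$ piece, use the product estimate $\|\psi_T g\|_{H^b}\lesssim T^{1/2-b}\|g\|_{H^b}$ (more precisely, $\|\psi_T g\|_{H^b}\lesssim\|g\|_{H^b}+T^{1/2-b}\|g\|_{L^\infty}$-type bounds).
\item In both pieces, use $\langle\tau\rangle^{b-1}\lesssim T^{1-b+b'}\langle\tau\rangle^{b'}$ on $|\tau|>1/T$, which holds because $b-1-b'\le0$.
\end{itemize}

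Combining the linear and Duhamel estimates and undoing the conjugation by $\cosh(\sigma_0 D_x)$ yields the proposition.
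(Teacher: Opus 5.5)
Your proposal is correct and follows essentially the same route as the paper, which simply defers to Lemma~2 of \cite{BFH}: conjugating by $\cosh(\sigma_0 D_x)$ reduces everything to the unweighted space $Y_m^{0,b}$, where the standard homogeneous estimate and the Ginibre--Tsutsumi--Velo inhomogeneous estimate apply. Two small remarks: first, your restriction $b\le 1+b'$ is not merely the ``relevant'' constraint but a necessary one, since the Duhamel estimate fails for $b>1+b'$; the paper's $1-b'$ is therefore a slip, harmless because the paper only uses the estimate with $b-b'<1$. Second, in the high-frequency piece the clean product bound is $\|\psi_T g\|_{H^b}\lesssim \|g\|_{H^b}+T^{1/2-b}\|\widehat g\|_{L^1}$, and this is where $b'>-1/2$ is actually needed, namely to get $\int_{|\tau|>1/T}|\widehat f(\tau)|\,|\tau|^{-1}d\tau\lesssim T^{1/2+b'}\|f\|_{H^{b'}}$; on the low-frequency part, $b'\le 0$ already suffices.
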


\begin{proof}[Proof of Theorem \ref{lwp-mKdVm}]
Let us fix $\sigma_0> 0$ and $v_0\in H^{\sigma_0,0}(\rr)$, and consider the map $\Phi_T$ given in \eqref{Phi}.
The combination of Proposition \ref{map.est}, Lemmas \ref{trilinear.mKdVm} and \ref{bilinear.au.thm} leads to the following estimate for the Bourgain norm of $\Phi_T$
\begin{equation*}
\|\Phi_T(v)\|_{Y^{\sigma_0,0,b}_m}
\lesssim
\|v_0\|_{H^{\sigma_0,0}} + T^{d'}(\|a\|_{\mathcal{A}^{\sigma_0}}+\|v\|_{Y_m^{\sigma_0,0,b}}^2)\|v\|_{Y_m^{\sigma_0,0,b}},
\end{equation*}
and
\begin{equation*}
\begin{split}
\|\Phi_T(v_1)-\Phi_T(v_2)\|_{Y_m^{\sigma_0,0,b}}
&\lesssim
T^{d'}\|\mu (v_1^2\p_x v_1-v_2^2\p_xv_2)+a(v_1-v_2)\|_{Y_m^{\sigma_0,0,b'}}\\
&\sim
T^{d'}\|\mu \p_x[(v_1^2 +v_2^2+ v_1v_2 )(v_1-v_2)]+a(v_1-v_2)\|_{Y_m^{\sigma_0,0,b'}}\\
&\lesssim
T^{d'}\!(\|v_1\|_{Y_m^{\sigma_0,0,b}}^2
\!+\!
\|v_2\|_{Y_m^{\sigma_0,0,b}}^2
\!+\!
\|v_1\|_{Y_m^{\sigma_0,0,b}}\|v_2\|_{Y_m^{\sigma_0,0,b}}
\!+\!
\|a\|_{\mathcal{A}^{\sigma_0}})\|v_1-v_2\|_{Y_m^{\sigma_0,0,b}}
\end{split}
\end{equation*}
for some $b>\frac 12$ and $0<d'< 1$.
Therefore, by choosing
$$
r\sim \|v_0\|_{H^{\sigma_0,0}} 
\quad \text{and} \quad
T\sim \frac{1}{(1+\|a\|_{\mathcal{A}^{\sigma_0}}+\|v_0\|^2_{H^{\sigma_0,0}})^\frac {1}{d'}},
$$
we can show that $\Phi_T$ maps the ball $B(r)$ into itself and is a contraction mapping within this set. Consequently, the existence of a solution $v$ in the space $Y_{m,T}^{\sigma_0,0,b}(\rr^2)$ is established for some $b>1/2$.
Moreover, by the continuous inclusion $Y_m^{\sigma,s,b} \subset C([-T,T], H^{\sigma,s}(\rr))$ for $b > 1/2$ and the appropriate choice of $r$, this local solution satisfies the bound given in \eqref{sol.bound}.

The uniqueness in the entire space $Y_{m,T}^{\sigma_0,0,b}(\rr^2)$ and the continuity of the solution map $\Phi_T$ are derived using standard techniques. For a complete exposition of the arguments, we refer the reader to \cite{BFH1}, \cite{BFH}, and the references therein.
\end{proof}

%
%
%
%
%
%
\section{Almost conserved quantities - Proof of Theorems \ref{ACL-mKdV-thm} and \ref{acq-mKdVm} } \label{Sec-4}

This section is dedicated to providing a detailed proof of the almost conserved quantities described in Theorems \ref{ACL-mKdV-thm} and \ref{acq-mKdVm}.
These results are essential in the proofs of Theorems \ref{global-mKdV-thm} and \ref{global-mKdVm}, serving as fundamental components of the arguments presented therein.

\begin{proof}[Proof of Theorem \ref{ACL-mKdV-thm}] We start proving \eqref{ACL-mKdV}. Note that the expression for $A_{\sigma}(t)$ provided in \eqref{A} can be represented as
\begin{equation}\label{A-m1}
 A_\sigma(t)=\mathcal{I}_{0}(t) +\mathcal{I}_{1}(t)+\mathcal{I}_{2}(t),
 \end{equation}
 where $U:=\cosh(\sigma D_x)u$ and
\begin{equation}\label{I_012}
\begin{split}
\mathcal{I}_{0}(t)
&=
\int [U(x,t)]^2dx,\\
\mathcal{I}_{1}(t)
&=
\int [\p_x U(x,t)]^2 dx-\frac{\mu}{6}\int [U(x,t)]^4 dx,\\
\mathcal{I}_{2}(t)
&=
\int [\p_x^2U(x,t)]^2dx +\frac{1}{18}\int [U(x,t)]^6dx -\frac{5\mu}{3}\int [U(x,t)\p_xU(x,t)]^2dx.
\end{split}
\end{equation}

Moreover, by applying the operator $\cosh(\sigma D_x)$ to the mKdV equation \eqref{mKdV-IVP}, we obtain
\begin{equation}
\label{mKdV-U}
\p_tU+ \p_x^3 U +\mu U^2\p_x U
=
F(U),
\end{equation}
where the operator $F$ is given by \eqref{fU-def-mKdV}.

Now, by following the same steps presented in the proof of Proposition~$3.1$ in \cite{FP24} (more precisely, see equation $(3.21)$ there), it can easily be shown that
\begin{equation}
\label{td-A}
\begin{split}
\frac{d}{dt}A_\sigma(t) 
&= 
2\int UF(U)dx +2\int \p_xU\p_x[F(U)]dx -\frac {2\mu}{3} \int U^3F(U) dx+2\int \p^2_x U \p_x^2 [F(U)]dx\\
& 
\quad 
+
\frac13\int U^5F(U)dx
+\frac{10\mu}{3}\int U(\p_xU)^2F(U)dx
+\frac{10\mu}{3}\int U^2\p_x^2UF(U)dx.
\end{split}
\end{equation}

Integrating \eqref{td-A} over the time interval $[0,t]$ with $0<t<T$, we obtain
\begin{equation}
\label{goal.A}
A_\sigma(t)
=
A_\sigma(0) + R(t),
\end{equation}
where $R(t):= R_{1}(t)+ R_{2}(t)+R_{3}(t)+R_4(t)$ with
\begin{align*}
R_{1}(t)
&:=
2\iint \chi_{[0,t]}UF(U)dxdt', \\
R_{2}(t)
&:=
-2\iint \chi_{[0,t]}\p_x^2 U F(U)dxdt'
-\frac{2\mu}{3}\iint \chi_{[0,t]} U^3F(U)dxdt',\\
R_{3}(t)
&:=
\frac13\iint  \chi_{[0,t]}U^5F(U)dxdt' 
+\frac{10\mu}{3}\iint  \chi_{[0,t]}[U(\p_xU)^2F(U)
+U^2\p_x^2UF(U)]dxdt',\\
R_{4}(t)
&:=
2\iint  \chi_{[0,t]}\p^2_x U \p_x^2 [F(U)]dx dt'.
\end{align*}

In view of \eqref{goal.A}, the estimate \eqref{ACL-mKdV} will follow if we can show that
\begin{equation}
\label{goal.r}
|R(t)|
\lesssim
\sigma^2 \|u\|^4_{Y^{\sigma,2,b}_T} \big(1+\|u\|^2_{Y^{\sigma,2,b}_T}+\|u\|^4_{Y^{\sigma,2,b}_T}\big),
\end{equation}
for all $t\in [0,T]$.

By applying duality, the time-restriction estimates \eqref{inc.Tao} and \eqref{strichartz.lemma} and Lemma \ref{est.fU-mKdV}, we bound the first three remainder terms as follows
\begin{equation}\label{est-r-short}
|R_{1}(t)| + |R_{2}(t)| + |R_{3}(t)| \lesssim \sigma^2 \| U\|^4_{Y_T^{2,b}} \big(1+\|U\|^2_{Y_T^{2,b}}+\|U\|^4_{Y_T^{2,b}}\big).
\end{equation}

In this way, to get  the desired inequality \eqref{goal.r}, the only remaining task is to estimate the absolute value of the term $R_4(t)$, thereby completing the proof of \eqref{ACL-mKdV}.
To achieve this, employing the same steps to derive \eqref{F-est}, we observe that the Fourier transform of $F(U)$ restricted to the spatial variable can be bounded as follows
\begin{equation}
\label{F-est-x}
|\mathcal{F}_x(F(U))(\xi,t')|
\lesssim
\sigma^2 \int_\bullet |\xi_2||\xi_3|^2
|\mathcal{F}_x(U)(\xi_1,t')\mathcal{F}_x(U)(\xi_2,t') \mathcal{F}_x(U)(\xi_3,t')|, 
\end{equation}
if we assume $|\xi_1|\le |\xi_2| \le |\xi_3|$, where $\int_\bullet$ denotes the integral over the set $\{\xi\in\rr: \;\xi=\xi_1+\xi_2+\xi_3\}$. 

Applying Parseval identity in the $x$ variable and estimate \eqref{F-est-x}, we get
\begin{equation*}
\begin{split}
|R_4(t)|
&\lesssim
\sigma^2
\int \chi_{[0,t]}\bigg(\int \Big[\overline{\mathcal{F}_x(|D_x|^3w)}(\xi,t')\Big]
\Big[\mathcal{F}_x( w\cdot |D_x|w \cdot |D_x^3| w)(\xi,t')\Big] d\xi\bigg) dt'
\\
&=
\sigma^2 
\iint \chi_{[0,t]} \overline{|D_x|^3w} \cdot w\cdot |D_x|w \cdot |D_x^3| w \; dxdt',
\end{split}
\end{equation*}
since $|\xi|\le 3|\xi_3|$, where $\mathcal{F}_xw=|\mathcal{F}_x(U)|$.

Now, it follows from Holder inequality and estimates in \eqref{U-L2} that
\begin{equation}
\label{est-r4}
|R_4(t)|
\lesssim
\sigma^2 
\||D_x|^3w\|_{L_x^\infty L^2_T}^2  \|w\|_{L_x^2 L^\infty_T} \||D_x|w\|_{L_x^2 L^\infty_T}
\lesssim
\sigma^2 \|U\|_{Y_{T}^{2,b}}^4.
\end{equation}

From the estimates \eqref{est-r-short} and \eqref{est-r4} it readily follows that $R(t)$ satisfies \eqref{goal.r}, 
which concludes the proof of \eqref{ACL-mKdV}.

In what follows, we will supply a  proof of \eqref{ACL-mKdV-0}. Using the bound \eqref{bound.u}  of the local solution in  the almost conserved quantity \eqref{ACL-mKdV}, we get 
\begin{equation}\label{est-At}
A_\sigma(t)
\le
A_\sigma(0) +C\sigma^2 \|u_0\|_{H^{\sigma,2}}^4(1+\|u_0\|_{H^{\sigma,2}}^2+\|u_0\|_{H^{\sigma,2}}^4).
\end{equation}
Hence, the proof of \eqref{ACL-mKdV-0} follows from \eqref{est-At} by noticing that 
\begin{equation}
\label{defocusing}
 \|u_0\|^2_{H^{\sigma,2}}
\le  
A_{\sigma}(0), \text{ when $\mu=-1$}.
\end{equation}
This finishes the proof of Theorem \ref{ACL-mKdV-thm}.
\end{proof}

Before providing a proof of Theorem \ref{acq-mKdVm}, we observe that if $v$ is a solution of \eqref{mKdVm-IVP}, then
\begin{equation}
\label{dt.L2-norm.v}
\frac{d}{dt}\int v(x,t)^2 dx
=
-2\int a(x)v^2(x,t)dx.
\end{equation}
Indeed, to derive \eqref{dt.L2-norm.v}, we write
\begin{equation}
\label{dt.L2-norm.v.1}
\frac{d}{dt}\int v^2 dx
=
2\int v\p_tv dx
=
-2(-1)^{j+1}\int v\p_x^m v  dx -2\mu\int v^3\p_x v dx - 2\int av^2 dx
\end{equation}
since $v$ satisfies the mKdVm equation. 
Applying integration by parts, we obtain that the first two integrals on the right-hand side of \eqref{dt.L2-norm.v.1} vanish due to the boundary conditions and the structure of the nonlinearity, leaving only the term involving $av^2$.

Therefore, it follows from \eqref{dt.L2-norm.v} and the Gronwall inequality that
\begin{equation}
\label{L2-norm.v-exp}
\|v(t)\|_{L_x^2}^2
\le 
e^{-2\lambda t}\|v_0\|^2_{L^2},
\end{equation}
for all $t\ge 0$.
Here,  the constant $\lambda$ is specified in condition {\bf (A1)} for the damping function.

At this point, the objective is to establish an upper bound on the growth of the solution in the analytic space $H^{\sigma,0}(\rr)$. This will be achieved through the proof presented below.

\begin{proof}[Proof of Theorem \ref{acq-mKdVm}] 
Let $V=\cosh(\sigma D_x)v$. By applying the operator $\cosh(\sigma D_x)$ to the mKdVm equation \eqref{mKdVm-IVP}, we obtain
\begin{equation*}
\p_tV+(-1)^{j+1}\p_x^m V +\mu V^2\p_x V +aV
=
F(V) +G(V),
\end{equation*}
where $F$ and $G$ are as defined in \eqref{fU-def-mKdV}.
Applying the same arguments to prove \eqref{dt.L2-norm.v}, we get
\begin{equation*}
\frac{d}{dt}\int V^2 dx
+
2\int aV^2 dx
=
2\int (F(V)+G(V))V dx. 
\end{equation*}
Since $a(x)\ge \lambda>0$ by condition $\bf{(A1)}$, we have
\begin{equation}\label{vv-1}
\frac{d}{dt}\|V(t)\|_{L_x^2}^2 
\le
-2\lambda \|V(t)\|_{L_x^2}^2 
+2\int (F+G)V dx. 
\end{equation}
For clarity and conciseness, we suppress the arguments of the operators $F$ and $G$ without compromising the understanding of the reasoning.
Now, using the classical differential form of Gronwall's lemma, it follows from \eqref{vv-1} that
\begin{equation}
\|V(t)\|_{L_x^2}^2 
\le
e^{-2\lambda t}\|V(0)\|_{L^2}^2 
+
2\int_0^t e^{-2\lambda(t-t')} \Bigg(\int (F+G)Vdx\Bigg)dt',
\end{equation}
for all $t\in [0, T_0]$.

Therefore, since $M_\sigma(t)= \|V(t)\|_{L_x^2}^2$, we have
\begin{equation}
\label{M-R}
M_\sigma(t)
\le
e^{-2\lambda t}M_\sigma(0)
+
2(R_{I}+R_{II}),
\end{equation}
where
\begin{equation*}
R_I
:=
\Bigg| \int_{0}^{t}\int e^{-2\lambda(t-t')}FVdx dt' \Bigg|
\quad
\text{and}
\quad
R_{II}
:=
\Bigg| \int_{0}^{t}\int e^{-2\lambda(t-t')}GVdx dt' \Bigg|.
\end{equation*}

Using duality, Lemmas \ref{Wa-3.1} and \ref{Xsb-F.mKdVm} restricted to time, and inequality \eqref{inc.Tao}, we can estimate $R_{I}$ as follows
\begin{equation}
\label{RI.est}
\begin{split}
R_{I}
\lesssim
\|\chi_{[0,t]} F\|_{Y_{m}^{0,b'}}\|\chi_{[0,t]}e^{-2\lambda(t-t')} V \|_{Y_{m}^{0,-b'}}
\lesssim
\|F\|_{Y_{m,T_0}^{0,b'}}\|V\|_{Y^{0,b}_{m,T_0}}
\lesssim
\sigma^\theta\|V\|^4_{Y^{0,b}_{m,T_0}}
\lesssim \sigma^\theta M^2_\sigma(0),
\end{split}
\end{equation}
where we used the fact that $-b'<1-b< 1/2<b$ and the solution bound \eqref{sol.bound}.

In order to estimate $R_{II}$, we apply Cauchy-Schwarz inequality in the $x$ variable and estimate \eqref{H-est} to obtain
\begin{equation}\label{R2-0}
R_{II}
\le
\int_0^{t} e^{-2\lambda(t-t')}\|G\|_{L^2_x}\|V\|_{L^2_x} dt' 
\lesssim
\frac{\sigma}{\sigma_0}\|a\|_{\mathcal{A}^{\sigma_0}}\int_0^{t} e^{-2\lambda(t-t')}\|v\|^2_{H^{\sigma,0}}dt'.
\end{equation}
Now, using the solution bound \eqref{sol.bound}, the estimate \eqref{R2-0} yields
\begin{equation}
\label{RII.est}
R_{II}
\lesssim
\frac{\sigma}{\sigma_0}\|a\|_{\mathcal{A}^{\sigma_0}}\|v_0\|^2_{H^{\sigma,0}}\frac{(1-e^{-2\lambda t})}{2\lambda}
\lesssim
\frac{\sigma}{\sigma_0}\|a\|_{\mathcal{A}^{\sigma_0}}\|v_0\|^2_{H^{\sigma,0}}\frac{(1-e^{-2\lambda T_0})}{2\lambda}
\lesssim
\sigma\|a\|_{\mathcal{A}^{\sigma_0}}M_\sigma(0),
\end{equation}
for all $t\in [0, T_0]$.
The proof of \eqref{acq-mKdVm.eq} is completed by combining \eqref{M-R}, \eqref{RI.est},  and \eqref{RII.est}.
\end{proof}

\begin{remark}
\label{acq-exp}
In inequality \eqref{acq-mKdVm.eq}, we note that the exponential $e^{-2\lambda t}$ can be written as $e^{-2\lambda (t - 0)}$, where $0$ is the lower bound of the time interval $[0, t] \subset [0, T_0]$, representing the length of the interval. This becomes crucial when applying \eqref{acq-mKdVm.eq} to the intervals $[kT_0, (k+1)T_0]$ for $k=0,1,\dots$, in the proof of Theorem \ref{global-mKdVm}, as each interval has the same length. Consequently, the exponential term remains consistent in each step, specifically $e^{-2\lambda T_0}$, simplifying the analysis across successive intervals.\end{remark}

%
%
%
%
%
%
\section{Lower bounds for the radius of analyticity - Proof of Theorems  \ref{global-mKdV-thm} \& \ref{global-mKdVm}}
\label{Sec-5}

In this section, we use the almost conserved quantities established in the previous section and provide proofs of the main results of this work. Having obtained the almost conserved quantities, the idea of the proof of Theorems  \ref{global-mKdV-thm} and \ref{global-mKdVm} is similar to those used  in \cite{FP23}, \cite{FP24} and \cite{Wa}.

\begin{proof}[Proof of Theorem \ref{global-mKdV-thm}]
The proof follows the iterative scheme presented in \cite{FP23}.
By fixing $s=2$ and using the $\sigma^2$ gain in the almost conserved quantity \eqref{ACL-mKdV-0}, we obtain the lower bound $\sigma(T)\geq cT^{-\frac12}$. 
\end{proof}

\begin{proof}[Proof of Theorem \ref{global-mKdVm}] 
Let $\sigma_0>0$ and $v_0\in H^{\sigma_0,0}(\rr)$.
The main idea in the proof is to iterate the local solution  by using the almost conserved quantity \eqref{acq-mKdVm.eq}.
Recall that the lifespan $T_0$ of the local solution $v$ to the IVP \eqref{mKdVm-IVP} guaranteed by Theorem \ref{lwp-mKdVm} is given by
$$
T_0
=
\frac{c_0}{(1+\|a\|_{\mathcal{A}^{\sigma_0}}+M_{\sigma_0}(0))^d},
$$
where $M_{\sigma}(t)$ is defined as in \eqref{M}.
The proof  is done by an iterative argument on the positive intervals $[0,T_0], [T_0, 2T_0],\dots$. 
This suffices because the mKdVm equation is time-reversible, allowing us to apply the argument in both temporal directions.

Let $0<\sigma\le\sigma_0$ to be determined later.
In Theorems \ref{lwp-mKdVm} and \ref{acq-mKdVm}, we established the existence of a solution $v\in C([0,T_0]; H^{\sigma,0}(\rr))$ that satisfies
\begin{equation}
\label{acq-mKdVm.eq.1}
M_\sigma(T_0)
\le
e^{-2\lambda T_0}M_\sigma(0)
+ 
C_1\big(\sigma^\theta M_\sigma(0) +\sigma \|a\|_{\mathcal{A}^{\sigma_0}}\big)M_\sigma(0),
\end{equation}
since $0<\sigma\le\sigma_0$.

For the next step,  we choose 
\begin{equation}
\label{cond.1}
\sigma:=\min\bigg\{\sigma_0, \frac{1-e^{-2\lambda T_0}}{2C_1\|a\|_{\mathcal{A}^{\sigma_0}}}, \Big(\frac{1-e^{-2\lambda T_0}}{2C_1M_{\sigma_0}(0)}\Big)^{1/\theta}\bigg\}.
\end{equation}

For the choice of $\sigma$ in \eqref{cond.1}, it follows from \eqref{acq-mKdVm.eq.1} that
\begin{equation}
\label{bound.1}
M_{\sigma}(T_0)\le M_{\sigma_0}(0)
\qquad \mathrm{and\; consequently}\qquad
\|v(T_0)\|_{H^{\sigma,0}}\le \|v_0\|_{H^{\sigma_0,0}}.
\end{equation}
The estimates \eqref{bound.1} under the  choice of $\sigma$ in \eqref{cond.1} allow us to apply Theorem \ref{lwp-mKdVm} with initial time $T_0$ (in place of $0$) and initial data $v(T_0)$ (in place of $v_0$), thereby obtaining a solution to the IVP \eqref{mKdVm-IVP} for the mKdVm equation at the interval  $[T_0, T_0+ T_1]$, where $T_1$ is the lifespan given by
$$
T_1 
=
\frac{c_0}{(1+\|a\|_{\mathcal{A}^{\sigma_0}}+M_{\sigma}(T_0))^d}.
$$
Moreover, $T_1\ge T_0$, as a consequence of \eqref{bound.1}.
Thus, we obtain an extension of $v$ over the interval $[T_0, 2T_0]$ that satisfies
\begin{equation}
\label{acq-mKdVm.eq.2}
M_\sigma(2T_0)
\le
e^{-2\lambda T_0}M_\sigma(T_0)
+ 
C_1\big(\sigma^\theta M_\sigma(T_0) +\sigma \|a\|_{\mathcal{A}^{\sigma_0}}\big)M_\sigma(T_0),
\end{equation}
as established by the almost conserved quantity \eqref{acq-mKdVm.eq} (see Remark \ref{acq-exp}).

Furthermore, using \eqref{bound.1} and the choice in \eqref{cond.1}, inequality \eqref{acq-mKdVm.eq.2} implies that
$$
M_\sigma(2T_0)\le 
M_{\sigma_0}(0)
\qquad \mathrm{and\; consequently}\qquad
\|v(2T_0)\|_{H^{\sigma,0}}\le \|v_0\|_{H^{\sigma_0,0}},
$$
which enables us to apply Theorem \ref{lwp-mKdVm} once more, yielding a solution over $[2T_0,3T_0]$.

Proceeding inductively, this construction extends $v$ over each interval $[kT_0, (k+1)T_0]$, $k=0,1,\ldots$,  covering the entire time domain and satisfying 
\begin{equation}
\label{bound.sol.global}
M_\sigma(kT_0)\le 
M_{\sigma_0}(0)
\qquad \mathrm{and\; consequently}\qquad
\|v(kT_0)\|_{H^{\sigma,0}}\le \|v_0\|_{H^{\sigma_0,0}}.
\end{equation}
By setting $\sigma_1$ as the minimum determined in \eqref{cond.1}, we have the desired time extension for $v$ as in \eqref{const.radius} for any $\tilde{\sigma}_0\le \sigma_1$.

The decay \eqref{exp.decay} is obtained by interpolation between the following two bounds
\begin{equation}
\label{interpolation}
\|v(t)\|_{L^2}
\le 
e^{-\lambda t}\|v_0\|_{L^2}
\quad\text{and}\quad
\|v(t)\|_{H^{\sigma_1,0}}
\le
 \|v_0\|_{H^{\sigma_0,0}},
\end{equation}
which are immediate consequences of \eqref{L2-norm.v-exp} and \eqref{bound.sol.global}, respectively.
In fact, from \eqref{interpolation} we have
$$
\|v(t)\|_{H^{\frac{\sigma_1}{2}},0}
\le
(\|v(t)\|_{L^2}\|v(t)\|_{H^{\sigma_1,0}})^\frac 12
\le
e^{-\frac{\lambda t}{2}}(\|v_0\|_{L^2} \|v_0\|_{H^{\sigma_0,0}})^\frac 12,
$$
which finishes the proof of \eqref{exp.decay} by considering $\tilde{\sigma}_0=\sigma_1/2$ and $C=(\|v_0\|_{L^2} \|v_0\|_{H^{\sigma_0,0}})^\frac 12$.
The proof of Theorem \ref{global-mKdVm} is now complete.
\end{proof}

\appendix

\section{ Fixed lower bound for the radius of analyticity for a coupled system of mKdV-type equations with damping}

In this appendix, we note that the main results established for the mKdVm equation \eqref{mKdVm-IVP} regarding the fixed lower bound for the evolution of the radius of analyticity can be obtained similarly for the following IVP for a coupled system of mKdV-type equations with damping
\begin{equation}\label{system.mKdV-IVP}
\left\{
\begin{array}{l}
\p_t w_1+ \p_x^3w_1+\mu \p_x(w_1w_2^2) +a_1(x)w_1 = 0, \quad x,t\in\rr, \\
\p_t w_2+ \alpha\p_x^3w_2+\mu \p_x(w_1^2w_2) +a_2(x)w_2 = 0, \\
w_1(x,0)=w_{1,0}(x), \quad w_2(x,0)=w_{2,0}(x),
\end{array}
\right.
\end{equation}
where $w_1$ and $w_2$ are real-valued functions, $0<\alpha< 1$ and $\mu=\pm 1$.
This system without damping, which means $a_1=a_2=0$, was also studied in the author's previous work \cite{FP24}, where a decay rate of the evolution of the radius of analyticity was obtained to be $cT^{-1}$.

We assume that the damping functions $a_1$ and $a_2$ satisfy the same conditions {\bf (A1)-(A3)}, with the following modified conditions for damping effect 
$$
a_1(x)\ge \lambda_1>0 \;\;\text{and}\;\;a_2(x)\ge \lambda_2>0,\;\; \text{for all } x\in\rr.
$$
Also, the Bourgain spaces considered in this case are $Z^{\sigma,s,b}$ and $Z^{\sigma,s,b}_\alpha$ with their respective norms defined as follows
\begin{equation*}
\begin{split}
\|w\|_{Z^{\sigma,s,b}} 
= 
\big\|\cosh(\sigma \xi)\la\xi\ra^s\la\tau-\xi^3\ra^b \widehat{w}\big\|_{L^2_{\xi}L^2_{\tau}} 
\;\;\;\text{and}\;\;\;
\|w\|_{Z_\alpha^{\sigma,s,b}} 
= 
\big\|\cosh(\sigma \xi)\la\xi\ra^s\la\tau-\alpha\xi^3\ra^b \widehat{w}\big\|_{L^2_{\xi}L^2_{\tau}}.
\end{split}
\end{equation*}
With this framework, the local well-posedness result for the IVP \eqref{system.mKdV-IVP} is stated as follows.

\begin{theorem}
\label{lwp-system.mKdV}
Let $\sigma_0 >0$, $a_1(x)$ and $a_2(x)$ satisfy the conditions {\bf (A1)}-{\bf (A3)}. 
For each $(w_{1,0},w_{2,0})\in H^{\sigma_0,0}(\rr)\times H^{\sigma_0,0}(\rr)$ there exist $b> 1/2$ and a time
$$
T_0
=
\frac{c_0}{(1+\max\{ \|a_{1}\|_{\mathcal{A}^{\sigma_0}}, \|a_{2}\|_{\mathcal{A}^{\sigma_0}}\} +\max\{\|w_{1,0}\|_{H^{\sigma_0,0}}, \|w_{2,0}\|_{H^{\sigma_0,0}}\})^d},
$$
where $c_0>0$ and $d>1$,
such that the IVP \eqref{system.mKdV-IVP} admits a unique solution
$$
(w_1,w_2)\in C([-T_0,T_0]; H^{\sigma_0,0}(\rr)) \times C([-T_0,T_0]; H^{\sigma_0,0}(\rr)) \cap Z^{\sigma_0,0,b}_{T_0}\times  Z^{\sigma_0,0,b}_{\alpha,T_0}$$
satisfying
\begin{equation}
\label{sol.bound-sys}
\sup\limits_{|t|\le T_0}\Big(\max\{\|w_1(t)\|_{H^{\sigma_0,0}}, \|w_2(t)\|_{H^{\sigma_0,0}}\}\Big)
\le
c\max\{\|w_{1,0}\|_{H^{\sigma_0,0}}, \|w_{2,0}\|_{H^{\sigma_0,0}}\}.
\end{equation}
Furthermore, the data-to-solution map is continuous.
\end{theorem}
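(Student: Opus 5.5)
The plan is to mirror the proof of Theorem~\ref{lwp-mKdVm}: a contraction argument in the product space $Z^{\sigma_0,0,b}_{T}\times Z^{\sigma_0,0,b}_{\alpha,T}$. We write the system in Duhamel form with the Airy groups $W_1(t)=e^{-t\p_x^3}$ and $W_\alpha(t)=e^{-\alpha t\p_x^3}$. With the cutoffs $\psi,\psi_T$ as in \eqref{Phi}, define
\begin{equation*}
\Phi_T(w_1,w_2)=\Big(\psi W_1(t)w_{1,0}-\psi_T\!\int_0^t W_1(t-t')N_1\,dt',\;\psi W_\alpha(t)w_{2,0}-\psi_T\!\int_0^t W_\alpha(t-t')N_2\,dt'\Big),
\end{equation*}
where $N_1=\mu\p_x(w_1w_2^2)+a_1w_1$ and $N_2=\mu\p_x(w_1^2w_2)+a_2w_2$. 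The linear estimates of Proposition~\ref{map.est} carry over verbatim to each component. The $\cosh$ weight only multiplies the Fourier transform, and the group estimates are insensitive to replacing $\xi^3$ by $\alpha\xi^3$. This gives
\begin{equation*}
\|\Phi_T^{(1)}\|_{Z^{\sigma_0,0,b}}\lesssim\|w_{1,0}\|_{H^{\sigma_0,0}}+T^{1-(b-b')}\|N_1\|_{Z^{\sigma_0,0,b'}},
\end{equation*}
and the analogous bound for the second component in $Z_\alpha$.

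The damping terms are handled by Lemma~\ref{bilinear.au.thm}. Its proof, following \cite{Wa}, uses only that the modulation weight satisfies $\la\tau-\phi(\xi)\ra^{b'}\le1\le\la\tau-\phi(\xi)\ra^{b}$ for $b'\le0\le b$, so it holds for any phase $\phi$. In particular it applies to $\phi(\xi)=\xi^3$ and $\phi(\xi)=\alpha\xi^3$, and gives $\|a_iw_i\|_{Z^{\sigma_0,0,b'}_{(\alpha)}}\lesssim\|a_i\|_{\mathcal{A}^{\sigma_0}}\|w_i\|_{Z^{\sigma_0,0,b}_{(\alpha)}}$. Condition {\bf (A3)} guarantees $\|a_i\|_{\mathcal{A}^{\sigma_0}}<\infty$.

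The main obstacle is the pair of mixed trilinear estimates at regularity $s=0$:
\begin{equation*}
\|\p_x(w_1w_2w_3)\|_{Z^{\sigma_0,0,b'}}\lesssim\|w_1\|_{Z^{\sigma_0,0,b}}\|w_2\|_{Z_\alpha^{\sigma_0,0,b}}\|w_3\|_{Z_\alpha^{\sigma_0,0,b}},
\end{equation*}
together with the analogous estimate with the roles of $Z$ and $Z_\alpha$ exchanged.
\begin{itemize}
\item First I reduce to $\sigma_0=0$. Since $\cosh(\sigma_0\xi)\le\prod_{k=1}^3\cosh(\sigma_0\xi_k)$ for $\xi=\xi_1+\xi_2+\xi_3$ (expand with the addition formulas; all terms are bounded by the product), the weight can be distributed onto the factors. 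It then suffices to prove the estimate in the unweighted spaces, applied to functions with Fourier transform $\cosh(\sigma_0\xi)|\widehat w|$.
\item The $\sigma_0=0$ estimates with $0<\alpha<1$ are precisely the ones used for the undamped coupled system in \cite{FP24} (there in the exponential-weight spaces, at $s\ge\frac14$). I will invoke them at the level needed here.
\item At $s=0$ the derivative loss is not handled by the single-equation mKdV theory, which needs $s\ge\frac14$. I would recover it from the nondegenerate resonance function $\tau-\xi^3-\sum_k(\tau_k-\phi_k(\xi_k))$, which vanishes only on a lower-dimensional set because $\alpha\ne1$. I would combine this with the bilinear smoothing estimates for the two distinct dispersions, whose gap $|\xi_1^2-\alpha\xi_2^2|$ is nondegenerate for $\alpha\in(0,1)$.
\item If $s=0$ turns out not to close, the fallback is to state the result at the regularity where the \cite{FP24} trilinear estimates hold. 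The structure of the argument is unchanged.
\end{itemize}

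Given these estimates, the difference estimates follow by multilinearity, as in the proof of Theorem~\ref{lwp-mKdVm}. Choosing $r\sim\max_i\|w_{i,0}\|_{H^{\sigma_0,0}}$ and $T_0$ as stated, with $d=1/d'$ for $d'=1-(b-b')$, makes $\Phi_T$ a contraction on the ball of radius $r$. This yields existence; the embedding $Z^{\sigma_0,0,b}\subset C_tH^{\sigma_0,0}$ for $b>\frac12$ gives \eqref{sol.bound-sys}. Uniqueness and continuity of the data-to-solution map follow by the standard arguments cited there.
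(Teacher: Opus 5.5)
You have the right overall scheme, but there is a genuine gap at the step you flag as the main obstacle: the mixed trilinear estimates at $s=0$ are never established, and none of your three routes establishes them.

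\begin{itemize}
\item The estimates you propose to borrow from \cite{FP24} are, by your own description, at $s\ge\frac14$. They therefore do not reach $H^{\sigma_0,0}$.
\item The resonance/bilinear-smoothing sketch is not an argument as it stands. The resonance function vanishes only on a lower-dimensional set also when $\alpha=1$, and there the trilinear estimate is known to fail below $s=\frac14$. What is needed is a quantitative lower bound, not a statement about the zero set. Moreover, $|\xi_1^2-\alpha\xi_2^2|$ vanishes on the lines $\xi_1=\pm\sqrt{\alpha}\,\xi_2$, so it is not nondegenerate.
\item The fallback proves a different theorem. In the range $s\in(0,\frac12]$ it would also cost you Lemma~\ref{bilinear.au.thm}, which (as the remark after it explains) is only available at $s=0$ or $s>\frac12$. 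The damping terms $a_iw_i$ would then no longer be controlled.
\end{itemize}

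The missing ingredient is the sharp trilinear estimate of Carvajal--Panthee \cite{CP}, which is what the paper uses. For $0<\alpha<1$ and every $s>-\frac12$ there are $\frac12<b'\le b<1$ with
\[
\|\p_x(w_1w_2^2)\|_{Z^{\sigma,s,b-1}}\lesssim\|w_1\|_{Z^{\sigma,s,b'}}\|w_2\|^2_{Z_\alpha^{\sigma,s,b'}}
\quad\text{and}\quad
\|\p_x(w_1^2w_2)\|_{Z_\alpha^{\sigma,s,b-1}}\lesssim\|w_1\|^2_{Z^{\sigma,s,b'}}\|w_2\|_{Z_\alpha^{\sigma,s,b'}}.
\]
The weighted version is Lemma~2.2 of \cite{FH}. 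This covers $s=0$ directly. With it, the rest of your plan is exactly the paper's argument: Proposition~\ref{map.est} componentwise, the phase-independent Lemma~\ref{bilinear.au.thm} for the damping, and a contraction on a ball in $Z^{\sigma_0,0,b}_{T}\times Z^{\sigma_0,0,b}_{\alpha,T}$.

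Two smaller slips also need fixing.
\begin{itemize}
\item The inequality $\cosh(\sigma_0\xi)\le\prod_k\cosh(\sigma_0\xi_k)$ is false as written (take $\sigma_0\xi_k=1$ for all $k$). The addition formula gives it with constant $4$, which is all you need.
\item Since $T_0$ in the statement contains $\max_i\|w_{i,0}\|_{H^{\sigma_0,0}}$ unsquared, the cubic term forces $d=2/d'$ rather than $d=1/d'$.
\end{itemize}
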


The proof of Theorem \ref{lwp-system.mKdV} is analogous to that of Theorem \ref{lwp-mKdVm} with trivial  modifications to address the system case. The main difference lies in the fact that we use the following trilinear estimates in the case of the IVP \eqref{system.mKdV-IVP}, which is an immediate consequence of the trilinear estimates proved in \cite{CP} (for more details of the proof we refer to Lemma $2.2$ in \cite{FH}).

\begin{lemma}
Let $\sigma \ge0$ and $0<\alpha <1$. If $s> -1/2$, then there exist $b$ and $b'$ with $1/2< b'\le b < 1$ such that the following estimates hold for all $w_1\in Z^{\sigma,s,b'}$ and $w_2\in Z_\alpha^{\sigma,s,b'}$
\begin{equation*}
\begin{split}
\|\p_x(w_1 w_2^2)\|_{Z^{\sigma,s,b-1}}
&\lesssim
\|w_1\|_{Z^{\sigma,s,b'}}\|w_2\|^2_{Z_\alpha^{\sigma,s,b'}}\\
\|\p_x(w_1^2 w_2)\|_{Z_\alpha^{\sigma,s,b-1}}
&\lesssim
\|w_1\|^2_{Z^{\sigma,s,b'}}\|w_2\|_{Z_\alpha^{\sigma,s,b'}}.
\end{split}
\end{equation*}
\end{lemma}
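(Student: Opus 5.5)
The plan is to reduce the statement to the classical (unweighted) trilinear estimates for the coupled system proved in \cite{CP}. The reduction uses the fact that the $\cosh(\sigma\xi)$ weight is essentially submultiplicative along the convolution set. We treat the first estimate; the second is identical with the roles of the dispersion relations exchanged.

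\textbf{Step 1: Fourier side.} Write the norm on the Fourier side. For $\xi=\xi_1+\xi_2+\xi_3$, the addition formula for $\cosh$ together with $|\sinh r|\le\cosh r$ (the case $\theta=0$ of \eqref{sinh-ineq}) gives
\[
\cosh(\sigma\xi)\le 4\prod_{k=1}^3\cosh(\sigma\xi_k).
\]
This is exactly the bound behind Lemma~\ref{cosh-est}, with $\theta_1=\theta_2=0$.

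\textbf{Step 2: Transfer to unweighted functions.} Set
\[
\widehat{W_1}=\cosh(\sigma\xi)\,|\widehat{w_1}|,\qquad \widehat{W_2}=\cosh(\sigma\xi)\,|\widehat{w_2}|.
\]
Since the Bourgain norms depend only on the modulus of the Fourier transform, we have $\|W_1\|_{Z^{0,s,b'}}=\|w_1\|_{Z^{\sigma,s,b'}}$, and similarly $\|W_2\|_{Z_\alpha^{0,s,b'}}=\|w_2\|_{Z_\alpha^{\sigma,s,b'}}$. Bounding the integrand pointwise, we get
\[
\cosh(\sigma\xi)\,\big|\mathcal{F}(\p_x(w_1w_2^2))(\xi,\tau)\big|
\le 4|\xi|\int_\ast |\widehat{W_1}(\xi_1,\tau_1)|\,|\widehat{W_2}(\xi_2,\tau_2)|\,|\widehat{W_2}(\xi_3,\tau_3)|.
\]
Multiplying by $\la\xi\ra^s\la\tau-\xi^3\ra^{b-1}$ and taking $L^2_{\xi,\tau}$ norms reduces matters to the $\sigma=0$ estimate
\[
\Big\|\la\xi\ra^s\la\tau-\xi^3\ra^{b-1}|\xi|\int_\ast \prod_j|\widehat{W_j}|\Big\|_{L^2}\lesssim \|W_1\|_{Z^{0,s,b'}}\|W_2\|^2_{Z_\alpha^{0,s,b'}}.
\]

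\textbf{Step 3: Invoke the unweighted result.} This estimate is precisely the trilinear estimate of \cite{CP} (as formulated in Lemma 2.2 of \cite{FH}), valid for $s>-1/2$, $0<\alpha<1$ and suitable $1/2<b'\le b<1$.

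\textbf{Main obstacle.} The delicate point is that \cite{CP} states its estimate for the functions themselves, whereas here it must be applied to the positive-Fourier-transform functions $W_j$. This is justified because the proofs there proceed by duality and weighted $L^2$ convolution bounds that depend only on $|\widehat{W_j}|$. If the published statement is not in that form, I would reprove it via the standard Cauchy--Schwarz/resonance argument. The resonance relation for the mixed dispersions $\xi^3$ and $\alpha\xi^3$, with $\alpha\ne1$, is the key input there, and it is exactly where the condition $0<\alpha<1$ enters. The weighted step itself is routine.
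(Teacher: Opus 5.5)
Your proposal is correct and follows essentially the paper's route: the paper only states that the lemma is an immediate consequence of the unweighted trilinear estimates of \cite{CP} (deferring details to Lemma 2.2 of \cite{FH}), and your bound $\cosh(\sigma\xi)\le 4\prod_{k=1}^3\cosh(\sigma\xi_k)$ together with the substitution $\widehat{W_j}=\cosh(\sigma\xi)|\widehat{w_j}|$ is exactly the transfer that makes this precise. The ``main obstacle'' you flag is not really one: since $\widehat{W_j}\ge 0$, the modulus of $\mathcal{F}(\p_x(W_1W_2^2))$ equals $|\xi|\int_\ast\prod_j|\widehat{W_j}|$ (up to the normalizing constant), so the published estimate applied verbatim to $W_1,W_2$ already gives the required bound without looking inside its proof.
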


Similarly, a global existence result analogous to Theorem \ref{global-mKdVm} can be established for the coupled system \eqref{system.mKdV-IVP}. Under the same conditions on the initial data \( (w_{1,0}, w_{2,0}) \in H^{\sigma_0,0}(\mathbb{R}) \times H^{\sigma_0,0}(\mathbb{R}) \) and damping terms \( a_1 \), \( a_2 \in H^{\sigma_0,0}(\rr)\), the solution \( (w_1, w_2) \) can be extended globally in time. Moreover, this solution exhibits decay properties, with bounds similar to those obtained for the  mKdVm equation \eqref{mKdVm-IVP}. This is precisely stated in the following theorem.

\begin{theorem}\label{global-system.mKdV}
Let $\sigma_0>0$, $w_{1,0}, w_{2,0}\in H^{\sigma_0,0}(\rr)$, $a_1, a_2\in H^{\sigma_0,0}(\rr)$ satisfy {\bf (A1)}-{\bf (A3)}, and $w_1,w_2\in C([-T_0,T_0];H^{\sigma_0,0}(\rr))$ be the local solution to the IVP \eqref{system.mKdV-IVP} guaranteed by Theorem \ref{lwp-system.mKdV}. Then, there is a constant radius \(\tilde{\sigma_0}=\tilde{\sigma_0}(\|w_{1,0}\|_{H^{\sigma_0,0}}, \|w_{2,0}\|_{H^{\sigma_0,0}}, \sigma_0, a_1,a_2)>0,\) such that the solution $(w_1,w_2)$ can be extended globally in time 
and
$$
w_1(t),w_2(t) \in H^{\sigma(t),0}(\rr),
\quad\text{with}\quad
\sigma(t) \ge \tilde{\sigma}_0>0, \;\;\text{ for all } t\ge 0.
$$
Additionally, the following bound holds
$$
\max\{\|w_1(t)\|_{H^{\tilde{\sigma}_0,0}}, \|w_2(t)\|_{H^{\tilde{\sigma}_0,0}}\}
\le
Ce^{-\frac{\lambda_0 }{2}t}, \;\; \lambda_0=\min\{\lambda_1, \lambda_2\}, \;\;\text{ for all } t\ge 0,
$$
with $C$ being a positive constant depending on $a_1(x)$, $a_2(x)$, $\|w_{0,1}\|_{H^{\sigma_0,0}}$ and $\|w_{0,2}\|_{H^{\sigma_0,0}}$. 
\end{theorem}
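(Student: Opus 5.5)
The proof follows the scheme of the proof of Theorem \ref{global-mKdVm}, with the scalar quantity $M_\sigma(t)$ replaced by the coupled quantity
$$
M_\sigma(t):=\|\cosh(\sigma D_x)w_1(t)\|_{L^2_x}^2+\|\cosh(\sigma D_x)w_2(t)\|_{L^2_x}^2 .
$$
For $\sigma=0$ this quantity dissipates. Multiplying the first equation of \eqref{system.mKdV-IVP} by $w_1$ and the second by $w_2$, then integrating, the dispersive terms vanish. The nonlinear terms combine to
$$
\mu\int \big(w_1\p_x(w_1w_2^2)+w_2\p_x(w_1^2w_2)\big)dx=-\mu\int \big(\p_x w_1\, w_1w_2^2+\p_xw_2\, w_1^2w_2\big)dx=-\frac{\mu}{2}\int\p_x(w_1^2w_2^2)dx=0.
$$
Since $a_k\ge\lambda_k\ge\lambda_0$, Gronwall gives $M_0(t)\le e^{-2\lambda_0t}M_0(0)$, the analogue of \eqref{L2-norm.v-exp}.

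Next I set $W_k=\cosh(\sigma D_x)w_k$ and apply $\cosh(\sigma D_x)$ to the system. This yields the same system for $(W_1,W_2)$ with right-hand sides $F_1+G_1$ and $F_2+G_2$. Here
$$
F_1=\mu\p_x\big[W_1W_2^2-\cosh(\sigma D_x)\big((\sech(\sigma D_x)W_1)(\sech(\sigma D_x)W_2)^2\big)\big],
$$
$F_2$ is defined analogously, and $G_k$ is $G$ from \eqref{fU-def-mKdV} with $a$ replaced by $a_k$. Repeating the proof of Theorem \ref{acq-mKdVm}, I obtain
$$
M_\sigma(t)\le e^{-2\lambda_0t}M_\sigma(0)+2\sum_{k}\Big|\int_0^t\!\!\int e^{-2\lambda_0(t-t')}(F_k+G_k)W_k\,dx\,dt'\Big|.
$$
I use the weight $e^{-2\lambda_0(t-t')}$ for both components; this is legitimate because $\frac{d}{dt}M_\sigma\le-2\lambda_0M_\sigma+2\sum\int(F_k+G_k)W_k$.

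The $G_k$ terms are handled exactly as $R_{II}$, via Lemma \ref{aU-est}. This uses $a_k\in H^{\sigma_0,0}\cap\mathcal A^{\sigma_0}$ and gives a contribution $\lesssim\sigma\max_k\|a_k\|_{\mathcal A^{\sigma_0}}M_\sigma(0)$.

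The main obstacle is the $F_k$ terms. They need an analogue of Lemma \ref{Xsb-F.mKdVm}: $\|F_1\|_{Z^{0,b'}}\lesssim\sigma^\theta\|W_1\|_{Z^{0,b}}\|W_2\|^2_{Z_\alpha^{0,b}}$, and similarly for $F_2$. I would prove this by using Lemma \ref{cosh-est} with $\theta_1=\theta$, $\theta_2=0$. This bounds the symbol by $\sigma^\theta\la\xi_{\rm med}\ra^\theta\le\sigma^\theta\la\xi\ra^{-\theta}\prod_j\la\xi_j\ra^{\theta}$ up to constants, since the product of the two largest frequencies dominates $\la\xi\ra\la\xi_{\rm med}\ra$. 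It then suffices to apply the trilinear estimate of the preceding lemma at regularity $s=-\theta$, which is allowed for $\theta\in[0,1/2)$ since $s>-1/2$. With this bound, the argument via duality, Lemma \ref{Wa-3.1} (adapted to $Z$ and $Z_\alpha$, with the same proof) and \eqref{inc.Tao} gives a contribution $\lesssim\sigma^\theta M_\sigma(0)^2$. Together with \eqref{sol.bound-sys}, this yields the almost conserved quantity
$$
M_\sigma(t)\le e^{-2\lambda_0t}M_\sigma(0)+C_1\big(\sigma^\theta M_\sigma(0)+\sigma\max_k\|a_k\|_{\mathcal A^{\sigma_0}}\big)M_\sigma(0).
$$

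Finally, I iterate over the intervals $[kT_0,(k+1)T_0]$ with $\sigma$ chosen as in \eqref{cond.1}, with $\lambda_0$ and $\max_k\|a_k\|_{\mathcal A^{\sigma_0}}$ in place of $\lambda$ and $\|a\|_{\mathcal A^{\sigma_0}}$. This choice makes the bracket at most $1-e^{-2\lambda_0T_0}$, so $M_\sigma(kT_0)\le M_{\sigma_0}(0)$ for every $k$. Because $M_\sigma$ controls both components, the lifespans from Theorem \ref{lwp-system.mKdV} never shrink below $T_0$, so the solution extends globally with radius at least $\sigma_1$. The decay estimate follows by interpolating, for each $w_k$, between the bound $\|w_k(t)\|_{L^2}\le e^{-\lambda_0t}M_0(0)^{1/2}$ and the uniform bound in $H^{\sigma_1,0}$. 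The pointwise weight inequality $\cosh(\sigma_1\xi/2)^2\le\cosh(\sigma_1\xi)$ and Cauchy–Schwarz give $\|w_k(t)\|_{H^{\sigma_1/2,0}}\le(\|w_k\|_{L^2}\|w_k\|_{H^{\sigma_1,0}})^{1/2}$. Taking $\tilde\sigma_0=\sigma_1/2$ then gives the stated bound with rate $e^{-\lambda_0t/2}$.
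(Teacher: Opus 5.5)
Your proposal is correct and follows essentially the paper's route. The paper proves this theorem only by analogy: it invokes the almost conserved quantity of Theorem~\ref{acq-system.mKdV} (your coupled $M_\sigma$ is its $N_\sigma$), derived as in Theorem~\ref{acq-mKdVm}, then runs the same iteration with the choice \eqref{cond.1} and the same interpolation between the $L^2$ decay and the uniform $H^{\sigma_1,0}$ bound. Your write-up in fact supplies details the paper leaves implicit: the cancellation of the coupled nonlinearity in the $L^2$ identity, the symmetric bound $\langle\xi_{\rm med}\rangle^\theta\lesssim\langle\xi\rangle^{-\theta}\prod_j\langle\xi_j\rangle^\theta$ reducing the $F_k$ terms to the trilinear estimate at $s=-\theta>-1/2$ (which explains the range $\theta\in[0,1/2)$), and the inequality $\cosh^2(\sigma_1\xi/2)\le\cosh(\sigma_1\xi)$ behind the interpolation step.
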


The key step in proving Theorem \ref{global-system.mKdV} is to establish an almost conserved quantity for the system, analogous to \eqref{acq-mKdVm.eq} for the mKdVm equation. The precise formulation of this almost conserved quantity is provided in the following theorem, whose proof follows by an analogous argument used in the proof of Theorem \ref{acq-mKdVm}.

\begin{theorem}
\label{acq-system.mKdV}
Let $0<\sigma\le \sigma_0$, $w_{1,0},w_{2,0}\in H^{\sigma,0}(\rr)$, $a_1,a_2\in H^{\sigma_0,0}(\rr)$ satisfying {\bf (A1)}-{\bf (A3)}   and $w_1,w_2\in C([-T_0,T_0];H^{\sigma,0}(\rr))$ be the local solution to the IVP \eqref{system.mKdV-IVP} guaranteed by the local well-posedness result presented in Theorem \ref{lwp-system.mKdV}. Then, for any $\theta\in [0,1/2)$ there is a constant $C_1>0$ such that for all $t\in [0,T_0]$
\begin{equation}
\label{acq-system.mKdV.eq}
N_\sigma(t)
\le
e^{-2\lambda_0 t}N_\sigma(0)
+ 
C_1\big(\sigma^\theta N_\sigma(0) 
+
\sigma  \max\{\|a_1\|_{\mathcal{A}^{\sigma_0}},\|a_2\|_{\mathcal{A}^{\sigma_0}}\}\big)N_\sigma(0),
\end{equation}
where $N_\sigma(t)= \|w_1(t)\|_{H^{\sigma,0}}^2 +\|w_2(t)\|^2_{H^{\sigma,0}}$ and
$\lambda_0=\min\{\lambda_1,\lambda_2\}.$
\end{theorem}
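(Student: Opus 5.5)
We follow the scheme of the proof of Theorem \ref{acq-mKdVm}, now applied componentwise. Set $W_k=\cosh(\sigma D_x)w_k$ for $k=1,2$. Applying $\cosh(\sigma D_x)$ to both equations of \eqref{system.mKdV-IVP} gives
\begin{equation*}
\begin{split}
\p_t W_1+\p_x^3W_1+\mu\p_x(W_1W_2^2)+a_1W_1&=F_1(W_1,W_2)+G_1(W_1),\\
\p_t W_2+\alpha\p_x^3W_2+\mu\p_x(W_1^2W_2)+a_2W_2&=F_2(W_1,W_2)+G_2(W_2).
\end{split}
\end{equation*}
Here
\[
F_1=\mu\p_x\big[W_1W_2^2-\cosh(\sigma D_x)\big((\sech(\sigma D_x)W_1)(\sech(\sigma D_x)W_2)^2\big)\big],
\]
$F_2$ is defined analogously, and $G_k$ is the operator $G$ from \eqref{fU-def-mKdV} with $a$ replaced by $a_k$.

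Multiply the $k$-th equation by $W_k$, integrate in $x$, and add. The dispersive terms vanish by integration by parts. The nonlinear terms cancel because
\[
\int W_1\p_x(W_1W_2^2)+\int W_2\p_x(W_1^2W_2)=\int \p_x\big(W_1^2W_2^2\big)/1\cdot\tfrac12\cdot 2=0,
\]
since $W_1\p_x(W_1W_2^2)+W_2\p_x(W_1^2W_2)=\p_x(W_1^2W_2^2)$ after expanding. This is the same cancellation that makes $\|w_1\|_{L^2}^2+\|w_2\|_{L^2}^2$ conserved when there is no damping. Using {\bf (A1)} for both $a_1$ and $a_2$, we obtain
\[
\frac{d}{dt}N_\sigma(t)\le -2\lambda_0N_\sigma(t)+2\sum_{k=1}^2\int (F_k+G_k)W_k\,dx.
\]
Gronwall's lemma then gives $N_\sigma(t)\le e^{-2\lambda_0t}N_\sigma(0)+2(R_I+R_{II})$, where $R_I$ and $R_{II}$ are the Duhamel-type integrals with weight $e^{-2\lambda_0(t-t')}$, exactly as in \eqref{M-R}.

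For $R_{II}$ we apply Lemma \ref{aU-est} to each $G_k$. Its proof uses only $L^2$ in $x$ and the analyticity of $a_k$, so it is insensitive to the dispersion. Arguing as in \eqref{R2-0}--\eqref{RII.est} with the solution bound \eqref{sol.bound-sys} gives
\[
R_{II}\lesssim\sigma\max\{\|a_1\|_{\mathcal A^{\sigma_0}},\|a_2\|_{\mathcal A^{\sigma_0}}\}N_\sigma(0).
\]

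For $R_I$ we argue by duality as in \eqref{RI.est}. We need:
\begin{itemize}
\item a version of Lemma \ref{Wa-3.1} in $Z$ and $Z_\alpha$, which has the same proof, since only time localization and the weight $e^{-2\lambda_0(t-t')}$ enter;
\item an analogue of Lemma \ref{Xsb-F.mKdVm}:
\[
\|F_1\|_{Z^{0,b-1}}\lesssim\sigma^\theta\|W_1\|_{Z^{0,b'}}\|W_2\|^2_{Z_\alpha^{0,b'}},
\]
and similarly for $F_2$ in $Z_\alpha^{0,b-1}$.
\end{itemize}
To prove the latter, use Lemma \ref{cosh-est} with $\theta_1=\theta$ and $\theta_2=0$. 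This bounds the Fourier transform of $F_1$ by $\sigma^\theta|\xi|\int_\ast|\xi_{\rm med}|^\theta\prod|\widehat W_j|$. Since $|\xi_{\rm med}|^\theta\le\la\xi_j\ra^{\theta}\la\xi_l\ra^{\theta}\la\xi\ra^{-\theta}$ up to constants for the two largest frequencies, the bound reduces to the trilinear estimate of the preceding lemma at regularity $s=-\theta$. That estimate holds for $s>-1/2$, which is exactly why the restriction $\theta\in[0,1/2)$ appears. We then get $R_I\lesssim\sigma^\theta N_\sigma(0)^2$, using \eqref{sol.bound-sys} to bound the $Z$-norms by $N_\sigma(0)^{1/2}$.

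\textbf{Main obstacle.} The mismatch of dispersion relations between the spaces complicates the duality step. The pairing $\int F_1W_1$ must be taken in $Z^{0,b-1}\times Z^{0,1-b}$, and the pairing $\int F_2W_2$ in $Z_\alpha^{0,b-1}\times Z_\alpha^{0,1-b}$. We must also check that the exponents $b'\le b$ given by the trilinear lemma are compatible with the inequalities $1-b<1/2<b'$ used in \eqref{RI.est}. If they are not, we would first shrink $b$ toward $1/2$, using the freedom in the choice of $b,b'$ in the trilinear lemma.
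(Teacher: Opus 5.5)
Your proposal is correct and is the argument the paper intends: the proof of Theorem \ref{acq-mKdVm} carried out componentwise, with the appendix trilinear estimate at $s=-\theta>-1/2$ accounting for the range $\theta\in[0,1/2)$. Three small corrections. First, the nonlinear terms sum to $\tfrac32\p_x(W_1^2W_2^2)$, which still integrates to zero. Second, the $b$ versus $b'$ issue is harmless: apply the $Z$-version of Lemma \ref{Wa-3.1} at the solution index $b'$ and use $\|\cdot\|_{Z^{0,1-b}}\le\|\cdot\|_{Z^{0,1-b'}}$ for $b'\le b$. Third, the bound on the $Z$-norms of the solution comes from the contraction argument, not from \eqref{sol.bound-sys} as literally stated, which only controls the $\sup_t$ norm.
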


We therefore conclude that the methods and results developed for the mKdVm equation can be applied effectively to the system \eqref{system.mKdV-IVP} to establish both the local and global results.


\vskip 0.3cm
\noindent{\bf Acknowledgements.} 
The first author acknowledges support from FAPESP, Brazil (\#2021/04999-9).
The second  author would like to thank FAPESP Brazil for financial support under grant (\#2024/10613-4) and the School of Mathematics, University of Birmingham, UK, for hospitality where a part of this work was developed. 
The authors would also like to thank the anonymous referee whose comments helped immensely in improving the original manuscript. \\


\noindent
{\bf Conflict of interest statement.} 
On behalf of all authors, the corresponding  author states that there is no conflict of interest.\\

\noindent 
{\bf Data availability statement.} 
The datasets generated and/or analyzed during the current study are available from the corresponding author on reasonable request.




\begin{thebibliography}{99}

\bibitem{BFH1}
R. Barostichi, R. Figueira and A. Himonas 
\textit{Well-posedness of the good Boussinesq equation in analytic Gevrey spaces and time regularity} 
J. Differential Equations \textbf{267} (2019) 3181--3198.

\bibitem{BFH}
R. Barostichi, R. Figueira and A. Himonas 
\textit{The modified KdV equation with higher dispersion in Sobolev and analytic spaces on the line,} 
J. Evol. Equ. (2021) 2213--2237.

\bibitem{BGK} J. L. Bona, Z. Gruji\'c and H. Kalisch, 
\textit{Algebraic lower bounds for the uniform radius of spatial analyticity for the generalized KdV equation},
Ann Inst. H. Poincar\'e C Anal Non Linéaire {\bfseries 22} No. 6 (2005) 783--797.

\bibitem{CP} X. Carvajal and M. Panthee, 
{\em Sharp well-posedness for a coupled system of mKdV-type equations}. 
J. Evol. Equ. {\bf 19}  No. 4, (2019) 1167--1197.

\bibitem{CKSTT} J. Colliander, M. Keel, G. Staffilani, H. Takaoka, and T. Tao, 
{ \em Sharp global well-posedness for KdV and modified KdV on $\rr$ and $\mathbb{T}$}, J. Amer. Math. Soc. {\bf 16} No. 3 (2003) 705--749.

\bibitem{CP-12} A. Corcho, M. Panthee, {\em Global well-posedness for a coupled modified KdV system}, Bull Braz Math Soc, New Series {\bf 43} (2012) 27--57.


\bibitem{D} R. C. Davidson,
{\em Methods in Nonlinear Plasma Theory}, Academic Press, New York, (1972).

\bibitem{DMT} T. T. Dufera, S. Mebrate, and A. Tesfahun,
\textit{On the persistence of spatial analyticity for the Beam equation},
J. Math. Anal. Appl. {\bfseries 509} (2022) 126001.

\bibitem{LG} L. G. Farah, 
{\em Global rough solutions to the critical generalized KdV equation,} J. Differential Equations {\bf 249} (2010) 1968--1985.


\bibitem{FH} R. O. Figueira and A. A. Himonas, \textit{Lower bounds on the radius of analyticity for a system of modified KdV equations}, J. Math. Anal. Appl. \textbf{497}, No. 2, (2021) 124917.

\bibitem{FP23} R. O. Figueira and M. Panthee, {\em Decay of the radius of spatial analyticity for the modified KdV equation and the nonlinear Schr\"odinger equation with third order dispersion}, NoDEA Nonlinear Differential Equations Appl.
(2024) 31--68

\bibitem{FP24} R. O. Figueira and M. Panthee, {\em New lower bounds for the radius of analyticity for the mKdV equation and a system of mKdV-type equations}, J. Evol. Equ. {\bf 24} No. 42 (2024).

\bibitem{GTB} T. Getachew, A. Tesfahun and B. Belayneh
\textit{On the persistence of spatial analyticity for generalized KdV equation with higher order dispersion},
Math. Nachr. {\bf 297} No. 5 (2024) 1737--1748.


\bibitem{GK-1} Z. Gruji\'c, H. Kalisch; 
{\em Local well-posedness of the generalized Korteweg-de Vries equation in spaces of analytic functions,} Differential and Integral Equations, {\bf 15} (2002) 1325--1334.

\bibitem{Axel-1}  A. Gr\"unrock, 
{\em A bilinear Airy-estimate with application to gKdV-3}, Differential and Integral Equations {\bf 18} (2005) 133--1339.

\bibitem{HK} Hasegawa, A. and Kodama, Y.
{\em Solitons in Optical Communications}, Clarendon Press, Oxford, (1995).

\bibitem{J} R. S. Johnson,
{\em  A Modern Introduction to the Mathematical Theory of Water Waves}, Cambridge University Press, Cambridge,  (1997).

\bibitem{Ka} Y. Katznelson, 
{\em An Introduction to Harmonic Analysis}, Cambridge University Press, Cambridge,  (1968).

\bibitem{KPV93}
 C. E. Kenig, G. Ponce and L. Vega,
{\em Well-posedness and scattering results for the generalized Korteweg-de Vries equation via the contraction principle}, Comm. Pure Appl. Math. \textbf{46} (1993) 527--620.

\bibitem{KPV91} C. E. Kenig, G. Ponce, L. Vega, 
{\em Oscillatory integrals and regularity of dispersive equations}, Indiana Univ. Math. J. {\bf 40} (1991) 33--69.

\bibitem{KPV91-1} C. E. Kenig, G. Ponce, L. Vega, 
{\em Well-Posedness of the Initial Value Problem for the Korteweg-de Vries Equation}, J. Amer. Math. Soc. {\bf 4}  (1991) 323--347. 

\bibitem{KdV} D. J. Korteweg, G.  de Vries 
{\em On the change of form of long waves advancing in a rectangular canal, and on a new type of long stationary waves}, Phil. Mag. {\bf 39} No. 5 (1895) 422--443.

\bibitem{M} R. M. Miura, 
{\em The Korteweg-de Vries Equation: A Survey of Results}, SIAM Rev. {\bf 18} No. 3 (1976) 412--459.

\bibitem{OBZBMB}
S. Otmani, A. Bouharou, K. Zennir, K. Bouhali, A. Moumen, M. Bouye. 
{\em On the study the radius of analyticity for Korteweg-de-Vries type systems with a weakly damping},
 AIMS Math. {\bf 9} No. 10 (2024) 28341--28360.

\bibitem{SS} 
S. Selberg and D. O. Silva, 
{\em Lower bounds on the radius of a spatial Analyticity for the KdV equation,}
Ann. Henri Poincar\'e {\bf 18 }(2017) 1009--1023.

\bibitem{ST} S. Selberg and A. Tesfahun,  {\em On the radius of spatial analyticity for the 1d Dirac-Klein-Gordon equations}, J. Differential Equations {\bf 259} (2015) 4732--4744.

\bibitem{Tao-b} 
T. Tao, 
{\em Nonlinear Dispersive Equations-Local and Global Analysis,} 
Published for the Conference Board of the Mathematical Sciences, Washington, DC; by the American Mathematical Society, Providence, (2006).

\bibitem{Tao} T. Tao, 
{\em Multilinear weighted convolution of $L^2$ functions and applications to nonlinear dispersive equations,} 
Published for the Conference Board of the Mathematical Sciences, Washington, DC; by the Amer. J.  math {\bf 123} (2001) 839--908.


\bibitem{T} 
A. Tesfahun, 
{\em On the radius of spatial analyticity for cubic nonlinear Schrödinger equations,}
J. Differential Equations {\bf 263} No. 11 (2017) 7496--7512.

\bibitem{TTB} E. Tegegn, A. Tesfahun and B. Belayneh, {\em Lower bounds on the radius of spatial analyticity of solution for KdV-BBM type equations},  NoDEA Nonlinear Differential Equations Appl., (2023) 30--47.

\bibitem{Wa} M. Wang, {\em Nondecreasing analytic radius for the KdV equation with a weakly
damping},  Nonlinear Anal. {\bf 215} (2022) 112653.

\bibitem{W} G. B. Whitham,
{\em Linear and Nonlinear Waves,} Wiley, New York, (1974).
\end{thebibliography}
\end{document}